\definecolor{green}{RGB}{20,140,10}
\numberwithin{equation}{section}
\mathchardef\mhyphen="2D
\theoremstyle{plain}
\newtheorem{theorem}{Theorem}[section]
\newtheorem*{theorem*}{Theorem}
\newtheorem{lemma}[theorem]{Lemma}
\newtheorem{proposition}[theorem]{Proposition}
\newtheorem{corollary}[theorem]{Corollary}
\theoremstyle{definition}
\newtheorem{definition}[theorem]{Definition}
\newtheorem{example}[theorem]{Example}
\newtheorem{notation}[theorem]{Notation}
\newtheorem{remark}[theorem]{Remark}
\newtheorem{question}[theorem]{Question}
\let\c@equation\c@theorem  
\renewcommand{\binom}[2]{\ensuremath{\genfrac(){0pt}{2}{#1}{#2}}}
\newcommand{\Span}{\ensuremath{\operatorname{span}}}
\DeclareMathOperator{\hdet}{\operatorname{hdet}} 
\DeclareMathOperator{\GL}{GL}
\DeclareMathOperator{\Ext}{Ext}
\DeclareMathOperator{\GKdim}{GKdim}
\DeclareMathOperator{\End}{End}
 \DeclareMathOperator{\Hom}{Hom}
\newcommand{\be}{\begin{enumerate}}
\newcommand{\ee}{\end{enumerate}}
\newcommand{\bq}{\begin{eqnarray*}}
\newcommand{\eq}{\end{eqnarray*}}
\newcommand{\bqn}{\begin{eqnarray}}
\newcommand{\eqn}{\end{eqnarray}}
\newcommand{\bb}{\textnormal{b}}
\renewcommand{\i}{\mathbbm{i}}
\newcommand{\constb}{u_1}
\newcommand{\constc}{u_2}
\newcommand{\constd}{u_3}
\newcommand{\conste}{u_4}
\renewcommand{\aa}{\mathbf{a}}
\renewcommand{\bb}{\mathbf{b}}
\newcommand{\xx}{\mathbf{x}}
\begin{document}
\title[Actions of Drinfeld doubles]{Actions of Drinfeld doubles of finite groups on Artin-Schelter regular algebras}

\author{Ellen Kirkman, W. Frank Moore, Tolulope Oke}

\address{Kirkman: Department of Mathematics ,
P. O. Box 7388, Wake Forest University, Winston--Salem, NC 27109}
\email{kirkman@wfu.edu}

\address{Moore: Department of Mathematics,
P. O. Box 7388, Wake Forest University, Winston--Salem, NC 27109}
\email{moorewf@wfu.edu}

\address{Oke: Department of Mathematics,
P. O. Box 7388, Wake Forest University, Winston--Salem, NC 27109}
\email{oket@wfu.edu}
\begin{abstract} 
For a finite group $G$ and $\Bbbk$ an algebraically closed field of 
characteristic zero we consider Artin-Schelter regular algebras
$A$ on which the Drinfeld
double $D(G)$ acts inner faithfully, and its associated algebras of
invariants $A^{D(G)}$. Explicit computations for the cases when $G$ is the (generalized) quaternion group of order 8 and 16 are given.
\end{abstract}
\maketitle

Classical invariant theory concerns the actions of groups $G$ on commutative polynomial rings
$A=\Bbbk[x_1, \dots, x_n]$ for $\Bbbk$ a field, and studies the subring of invariants $A^G$.
There has been recent work exploring extensions of these results to actions on noncommutative
algebras $A$, particularly to Artin-Schelter regular algebras (Definition \ref{defASreg}). AS regular algebras are seen as a natural class of algebras to consider because when they are commutative, they are isomorphic to commutative polynomial algebras.  Many results in classical invariant theory have been shown to have analogous results in this context (see, for example, \cite{K}). One can begin by considering actions by subgroups of the graded automorphism group of $A$,
 i.e. study symmetries of $A$.  However, given the
rigidity of noncommutative algebras, there often are not a large number of such actions.
Hence to find additional actions on $A$ there has been interest in considering ``quantum symmetries" of noncommutative algebras,
i.e. actions on $A$ by Hopf algebras $H$.  However, not all algebras have non-trivial quantum symmetries (i.e. Hopf actions that
are not actually actions by a group); see, for example, results of  Cuadra, Etingof, and Walton, including \cite{EW1, EW2, EW3, CEW1, CEW2}. The search for pairs $(A,H)$, where $A$ is an Artin-Schelter regular algebra and $H$ is a finite dimensional Hopf algebra that acts inner faithfully on $A$ (meaning there is no proper Hopf factor of $H$ that acts non-trivially on $A$) has produced a number of examples. For
Artin-Schelter regular algebras that have a grading by a finite group $G$ there are actions by $H = (\Bbbk G)^*$ the dual of the group algebra, which when $G$ is non-abelian is
not isomorphic to $\Bbbk G$ (see, e.g.  \cite{KKZ2, CKZ1, CKZ2, C2, Zhu}) and  pairs $(A,H)$ for other Hopf algebras  have also been discovered (see, e.g. \cite{FKMW1,FKMW2}).  At this time there is not a good understanding of the Hopf algebras $H$ or the AS regular algebras $A$ for which a pair $(A,H)$ exists.

Throughout, as we choose specific values of some of the parameters, we take $\Bbbk = \mathbb{C}$, though the results could be considered over an algebraically closed field of characteristic zero. 
In this paper, with the goal of creating more examples of pairs $(A,H)$, we consider the Drinfeld double $H = D(G)$ of a finite group, which is constructed using
both $\Bbbk G$ and $(\Bbbk G)^*$. After experimenting with a number of smaller order nonabelian groups, we found that $D(G)$ has inner faithful actions on AS regular algebras when $G$ is the (generalized) quaternion group (dicyclic group) $\mathfrak{D}_{2n}$ of order $8\; (n=2)$ and of order $16\; (n=4)$, and we consider these cases in detail.  Furthermore, we show that there is a large number of
algebras on which $\mathfrak{D}_{2n}$, for $n = 2$ and $4$, acts inner faithfully, though in many cases the algebra $A$ on which $H$ acts will not be an AS regular algebra.

When the characteristic of $\Bbbk$ is zero, $H = D(G)$ is a semisimple Hopf algebra. Using results of Witherspoon \cite{W1, W2},  we can construct the simple $D(G)$-modules. Then using
a result of Rieffel \cite{R} (see also \cite{FKMW1}), we can consider all possible finitely generated $D(G)$-modules $V$, expressed as a finite sum of simple $D(G)$-modules, and search for $V$ where the action of $D(G)$ on $\sum_{m=1}^{\infty}V^{\otimes m}$ contains all simple $D(G)$-modules as direct summands. The $D(G)$-modules $V$ with this property can serve as the $D(G)$-module of the degree one elements of an algebra $B$ (i.e. $B_1 = V$ as $D(G)$-modules) with $D(G)$ acting inner faithfully on $B$.  We choose such a $D(G)$-module $V$ that is the sum of a minimal number of simple $D(G)$-modules.  These computations were performed with GAP \cite{GAP} and Macaulay2 \cite{M2}, and for $D(\mathfrak{D}_4)$ there are 224 such modules and for $D(\mathfrak{D}_{8})$ there are 1,952 of them! These modules $V$ can then used to define algebras $B$ with $B_1 = V$ that support inner faithful $D(G)$ actions (i.e. provide the $D(G)$-module structure for the degree one elements $B_1$ of an algebra $B$, where the $D(G)$ action on $B$ is inner faithful).

Next, having fixed the action of $D(G)$ on the generators of $B$, we need to define the relations in $B$. In order to define a quadratic algebra $B$ on which $D(G)$ acts, the ideal of relations for $B$ must be an $D(G)$-submodule of $V \otimes V$; to accomplish this, we explicitly compute the generators of the simple modules in the decomposition of $V \otimes V$  into simple $D(G)$-modules, producing a generator for each simple module in the decomposition. We then seek combinations of these generators that produce algebras $B$ that are $D(G)$-modules and AS regular algebras.  In both cases, $D(\mathfrak{D}_4)$ and $D(\mathfrak{D}_{8})$, while not exhausting all possible relations for the algebras $B$, we find algebras $B$ that are (trimmed) double Ore extensions in the sense  of J. J. Zhang and J. Zhang \cite{ZZ} (see Notation \ref{doubleOreNotation}), and hence are AS regular. In Theorem \ref{double-ore} for the given values of the six parameters, the six-dimensional algebras $B$ for $D(\mathfrak{D}_{4})$ are shown  to be double Ore extension of a skew-polynomial ring, and in Theorem \ref{i-double-ore} for the given values of the seven parameters, the eight-dimensional algebras $B$ for $H = D(\mathfrak{D}_{8})$ are shown  to be iterated double Ore extensions of a $(-1)$-skew polynomial ring. 

Using a result of Crawford \cite{C1}, we compute the homological determinant of the actions. Furthermore, in the case $n=2$, using a degree bound argument of \cite[Lemma 2.2]{KKZ1}, and a noncommutative analogue of SAGBI bases in Lemma \ref{sagbi-bases}, we find a minimal generating set for the invariant subring $B^H$, when $H = D(\mathfrak{D}_4)$, under the condition that the homological determinant of the action of $D(\mathfrak{D}_4)$ is trivial, and present our result in Theorem \ref{generatorTheorem}. 

 The dicyclic groups $\mathfrak{D}_{2n}$ for $n = 2$ and $n=4$ were found to have AS regular algebras associated to their doubles, but we show that pairs $(A,D(G))$ do not exist for all finite groups $G$. In particular, we show that finding a quadratic AS regular algebra $A$ with a  inner faithful $D(S_3)$ action  using the smallest number of generators is not possible.  Understanding the groups $G$ for which  there is an AS regular algebra $A$ supporting an inner faithful $D(G)$ action remains an open problem, and properties of all the rings of invariants remain to be explored. However, these examples suggest that this context is rich for further study.

The paper is organized as follows.  Section \ref{background} provides background on the Drinfeld double of a finite group and its invariants, as well as on double Ore extensions, and the dicyclic groups $\mathfrak{D}_{2n}$ are introduced. In Section \ref{dicyclic-n-2} we begin the study of the case $n=2$ where $G$ is the quaternion group of order $8$.
 In Proposition \ref{inner-faithful-n-2} we provide a particular $D(\mathfrak{D}_{4})$-module $V$  that supports an inner faithful action for algebras $B$ with relations as given in Table \ref{relationsQ4}.  
 In Theorem \ref{double-ore} we explicitly present the AS regular algebras $B$ with their generators and relations, along with the action of $D(\mathfrak{D}_{4})$ on $B$, and we prove that these algebras $B$ are double Ore extensions with the specified data.  The homological determinant of the action of $D(\mathfrak{D}_{4})$ on $B$ is computed in Proposition \ref{prop:hdetQ4}.  In Section \ref{invariants} the invariants $B^{D(\mathfrak{D}_{4})}$ are computed, assuming that the homological determinant of the action is trivial.  The case 
 when $n=4$ (i.e., when $G$ is the dicyclic group of order 16) is considered in Section \ref{dicyclic-n-4}. In Section \ref{S3}, we study the action of $D(S_3)$, the double of the symmetric group of order 6. We conclude the paper in Section \ref{questions} with some remarks and open questions.

\begin{section}{Background}\label{background}
In this section, we provide notations and background results that we use throughout. We use the standard notation $(\Delta,\epsilon, S)$ for the comultiplication, counit and antipode of a Hopf algebra $H$ and refer to \cite{M} for preliminary results about Hopf algebras. All $H$-modules are right $H$-modules, and actions by $H$ are on the right (when described by matrices using the rows of the matrix).

\begin{definition} \label{defASreg} Let $A$ be a connected graded algebra. Then $A$ is \textit{Artin–Schelter (AS) regular (resp. AS Gorenstein)} if
\begin{enumerate}
\item[(1)] $A$ has finite global dimension (resp. injdim($_AA$) and injdim($A_A$ are both finite),
\item[(2)] $A$ has finite Gelfand-Kirillov dimension, and
\item[(3)] the Gorenstein condition holds i.e. $\Ext^i_{A}(\Bbbk, A) = \delta_{i,d}\cdot \Bbbk(l)$ for some $l\in\mathbb{Z}$.
\end{enumerate}
\end{definition}
Examples of AS regular algebras include skew polynomial rings, graded Ore extensions of AS regular algebras, double Ore extensions of AS regular algebra as well as other algebras described by generators and relations. We now give the following notations for skew polynomial rings and trimmed double Ore extensions that we use. 

\begin{notation} \label{skewPolynomial}
Let $\mathbf{q} = (q_{ij})$ be an $n \times n$ multiplicatively skew symmetric matrix with
entries in a field $\Bbbk$.  That is, for each $1 \leq i,j \leq n$, one has that
$q_{ij} = q_{ji}^{-1}$.    The skew-polynomial ring
$\Bbbk_\mathbf{q}[x_1,\dots,x_n]$ is defined to be the quotient of the free algebra
on $x_1,\dots,x_n$ modulo the ideal generated by the relations
$$\{x_jx_i = q_{ji}x_ix_j~|~1 \leq i,j \leq n\}.$$
\end{notation}

\begin{notation} \label{doubleOreNotation}
Let $A$ be an algebra and let $B$ be another algebra containing
$A$ as a subring.  Following \cite{ZZ}, one says that $B$
is a \emph{trimmed double Ore extension of $A$} (which we will refer to as simply a double Ore extension), provided:
\begin{enumerate}
\item There exist $x_1,x_2 \in B$ such that $B$ is generated by
$A$, $x_1$ and $x_2$;
\item One has an equality $x_1x_2 = p_{12}x_2x_1 + p_{11}x_1^2$;
\item $B$ is a free left $A$-module with basis $\{x_1^{n_1}x_2^{n_2}~|~n_i \in \mathbb{N}\}$;
\item One has an equality $x_1A + x_2A = Ax_1 + Ax_2$.
\end{enumerate}

One may also describe such an extension ``externally''.  In this case, one considers
a $k$-algebra map $\sigma : A \to M_2(A)$ and scalars
$P = \{p_{11},p_{12}\}$ such
that certain conditions are met, which we explain now in our case of
interest, which is when $p_{11} = 0$.  We will use $p$ in place of $p_{12}$,
and use $p$ in place of $P$ for brevity.

Before giving the conditions, we set up some  additional notation.  For $r \in A$,
let $\sigma(r) = \begin{pmatrix}\sigma_{11}(r) & \sigma_{12}(r) \\
                                \sigma_{21}(r) & \sigma_{22}(r) \end{pmatrix}$,
where each $\sigma_{ij} \in \End_k(A)$.
The data $\{\sigma,p\}$ gives rise to the relation $x_1x_2 = px_2x_1$ as well
as relations obtained by equating the entries in the following matrix equation:
\begin{equation*}
\begin{pmatrix}x_1\\x_2\end{pmatrix}r = \begin{pmatrix}x_1r \\ x_2r\end{pmatrix}
 = \sigma(r)\begin{pmatrix}x_1\\x_2\end{pmatrix}
 = \begin{pmatrix}\sigma_{11}(r) & \sigma_{12}(r) \\ \sigma_{21}(r) & \sigma_{22}(r) \end{pmatrix}
   \begin{pmatrix}x_1\\x_2\end{pmatrix}.
 \\
 \end{equation*}

Following \cite{ZZ}, one says that a $\Bbbk$-algebra map $\sigma : A \to M_2(A)$ is 
\emph{invertible} if there exists a $\Bbbk$-algebra map $\phi : A \to M_2(A)$ such that
one has the following equality in $M_2(\End_k(A))$: 
$$
\begin{pmatrix}
\phi_{11} & \phi_{12} \\
\phi_{21} & \phi_{22}
\end{pmatrix}
\begin{pmatrix}
\sigma_{11} & \sigma_{12} \\
\sigma_{21} & \sigma_{22}
\end{pmatrix}
= 
\begin{pmatrix}
\operatorname{Id}_A & 0 \\
0 & \operatorname{Id}_A
\end{pmatrix}
=
\begin{pmatrix}
\sigma_{11} & \sigma_{12} \\
\sigma_{21} & \sigma_{22}
\end{pmatrix}
\begin{pmatrix}
\phi_{11} & \phi_{12} \\
\phi_{21} & \phi_{22}
\end{pmatrix}.
$$


 
By \cite[Proposition 1.11]{ZZ}, the data $\{\sigma,p\}$ (where $\sigma$ is an invertible algebra map in the sense described above) defines a double Ore
extension $A_p[x_1,x_2;\sigma]$ provided that the following conditions hold for all
elements $r$ in an algebra generating set of $A$ over $k$:
\begin{equation} \label{doubleOreConditions}
\begin{gathered}
\sigma_{21}(\sigma_{11}(r)) = p\sigma_{11}(\sigma_{21}(r)), \\
\sigma_{21}(\sigma_{12}(r)) - p^2\sigma_{12}(\sigma_{21}(r)) = p(\sigma_{11}(\sigma_{22}(r)) - \sigma_{22}(\sigma_{11}(r))), \\
\sigma_{22}(\sigma_{12}(r)) = p\sigma_{12}(\sigma_{22}(r)).
\end{gathered}
\end{equation}
\end{notation}

\begin{remark} \label{diagDOData}
We will  consider only graded double Ore extensions, so that one may view $\sigma_{ij}$ as
an element of $\GL(A_1)$, and hence as a matrix after choosing a basis of $A_1$.
In many cases that we consider, $\sigma(r)$ is either a diagonal or a skew-diagonal
matrix.  If $\sigma(r)$ is a diagonal (respectively, skew-diagonal) matrix for all $r$
in a generating set of $A$, then equations  \eqref{doubleOreConditions} reduce to the
condition that the matrices $\sigma_{11}(r)$ and $\sigma_{22}(r)$
(respectively, $\sigma_{12}(r)$ and $\sigma_{21}(r)$) commute for all generators $r$ of $A$.
\end{remark}

\begin{lemma} \label{MinsideMM}
Let $\Bbbk$ be a field, $R$ a semisimple $\Bbbk$-algebra, $M$ a right $R$-module, and let
$\textbf{a} = [a_1:\dots:a_n] \in \mathbb{P}^{n-1}(\Bbbk)$.
Then the function $\varphi_\textbf{a} : M \to M^{\oplus n}$ given by
$\varphi_\textbf{a}(m) = (a_1m,\dots,a_nm)$ is an injective homomorphism of
$R$-modules.  If $M$ is simple and $\Bbbk$ is algebraically closed,
then every simple submodule of $M^{\oplus n}$ is given by the
image of such a map.
\end{lemma}

\begin{proof}
The map $\varphi_\textbf{a}$ is clearly an injective homomorphism of $R$-modules.
Now suppose that $M$ is simple with $\Bbbk$ algebraically closed, 
and let $N$ be a simple submodule of $M^{\oplus n}$.  Then $N$
is isomorphic to $M$, and is a direct summand of $M^{\oplus n}$.
Fix an isomorphism $\psi : M \to N$, and for each $i = 1,\dots,n$, let
$\pi_i$ denote the projection onto the $i^\text{th}$ factor of $M^{\oplus n}$.
By Schur's Lemma, one has that for each $i$, there exists $a_i \in \Bbbk$
such that $\pi_i(\psi(m)) = a_im$.  Since $N$ is nonzero,
some $a_i \neq 0$ so that $[a_1:\dots:a_n] \in \mathbb{P}^{n-1}$.  It follows
that $N = \{(a_1m,\dots,a_nm)~:~m \in M\}$, as desired.
\end{proof}

\begin{notation}\label{Drinfeld-double}
For a finite group $G$, let $\Bbbk G$ be the group algebra and $(\Bbbk G)^*:= \Hom_\Bbbk(\Bbbk G,\Bbbk)$ be its Hopf dual. A $\Bbbk$-basis for $(\Bbbk G)^*$ is given by $\{\phi_g: g\in G\}$ where 
 $$\phi_g(h) = \begin{cases} 1 & g=h, \\ 0 & g\neq h.\end{cases}$$
The multiplication on  $(\Bbbk G)^*$ is pointwise, that is, $(\phi_g\phi_h)(x) = \phi_g(x)\phi_h(x)$ and the left action $G$ on $(\Bbbk G)^*$ is given by $h\cdot\phi_g\mapsto \phi^h_g$ where $\phi^h_g = \phi_{hgh^{-1}}$ for all $g,h\in G$. Following the definitions in \cite{W1}, the Drinfeld double of $G$ is the vector space $D(G) = \Bbbk G^* \otimes_{\Bbbk}  \Bbbk G$, having a $\Bbbk$-basis $\{\phi_g \otimes h: g,h\in G\}$ which after suppressing the tensor will be written as $\{\phi_g h: g,h\in G\}$. The multiplication
in $D(G)$ is defined on basis elements as $(\phi_g h) (\phi_{g'} h') = \phi_g (h\cdot\phi_{g'}) h h' = \phi_g \phi_{hg'h^{-1}} hh',$
which is nonzero if and only if $g = hg'h^{-1}$. The coproduct on $D(G)$ is defined as
$$\Delta(\phi_g h) = \sum_{x \in G} \phi_xh \otimes \phi_{x^{-1} g}h \in D(G) \otimes D(G),$$
the counit is $\epsilon(\phi_g h) = \delta_{1,g}$, and the antipode is $ S(\phi_g h) = \phi_{h^{-1}g^{-1}h} h^{-1}$. Thus a Hopf structure on the Drinfeld double is defined.
\end{notation}

\begin{notation} \label{not:modulesOverDouble}
Each simple $D(G)$-module (c.f. \cite{W1}[Corollary 2.3]) is of
the form $(a,\chi)$, where $a$ represents a conjugacy
class of $G$ and $\chi$ is an irreducible character 
of $C_G(a)$, the centralizer of $a$ in $G$. For 
$a,g \in G$ we use the notation $a^g = g^{-1}a g$.

The simple right $D(G)$-module  $(a,\chi)$ has a basis $v \otimes_{C_G(a)} g_i \in V \otimes _{C_G(a)} \Bbbk G$, where $g_i$ ranges over the $[G:C_G(a)]$ elements in a fixed transveral $\{ g_i\}$ for the right cosets  $C_G(a)g_i$ and $v$ ranges over a basis for the simple $C_G(a)$-module $V$ associated to $\chi$. The module  $(a, \chi)$ is graded with $(v \otimes_{C_G(a)} g_i)$ in degree $a^{g_i}$. The action of $D(G)$ on a basis element of the module $(a, \chi)$ is:
$$(v \otimes_{C_G(a)} g_i)(\phi_x h)=
 \begin{cases}
0 & x \neq a^g_i,\\
v \otimes_{C_G(a)} g_ih & x = a^{g_i}.
\end{cases}$$
Then $v \otimes_{C_G(a)} g_ih$ is expressed in terms of the basis by
writing $g_ih = cg_j$, where $g_ih$ is in the coset $C_G(a)g_j$, so
$v \otimes_{C_G(a)} g_ih = v \otimes_{C_G(a)} cg_j = v.c \otimes_{C_G(a)} g_j$.
\end{notation}


The following lemma can be used to find invariants of $A$ under the action of $D(G)$.
\begin{lemma}\label{identity-component}
Let $A$ be a right $D(G)$-module algebra.  Then:
\begin{enumerate}
\item The action of $G$ induced from that of $D(G)$ restricts to an action of $A_e$, the identity
component of $A$.  The invariants of this restricted action are the invariants of the action of $D(G)$ on $A$.
That is, $A^{D(G)} = (A_e)^G$.
\item Let $f \in A^G$.  Let $f = \sum_{g \in G} f_g$ be the decomposition of $f$ into
its $G$-graded components.  Then for every conjugacy class $X$ of $G$,
$f_X := \sum_{x \in X} f_x \in A^G$.  In particular, $f_e \in A^G$.
\end{enumerate}
\end{lemma}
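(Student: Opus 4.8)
The plan is to unpack the two Hopf-subalgebra structures packaged inside a $D(G)$-module algebra and then reduce everything to bookkeeping on $G$-graded components. First I would record that $D(G)$ contains $(\Bbbk G)^*$ (as the elements $\phi_g\otimes e$) and $\Bbbk G$ (as the elements $1\otimes h=\sum_x \phi_x\otimes h$) as Hopf subalgebras. Restricting the $D(G)$-action along these inclusions makes $A$ simultaneously a right $(\Bbbk G)^*$-module algebra and a right $\Bbbk G$-module algebra. Since the orthogonal idempotents $\phi_g$ sum to the identity of $(\Bbbk G)^*$, setting $A_g:=A\cdot\phi_g$ gives a $G$-grading $A=\bigoplus_{g}A_g$, while the $\Bbbk G$-action is just a right $G$-action by algebra automorphisms. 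The one relation I really need to extract from the multiplication rule in Notation \ref{Drinfeld-double} is $h\,\phi_{g'}=\phi_{hg'h^{-1}}\,h$ in $D(G)$; applied to $a\in A$ it yields $(a\cdot h)\cdot\phi_{g'}=(a\cdot\phi_{hg'h^{-1}})\cdot h$, and hence the compatibility $A_g\cdot h\subseteq A_{g^h}$, where $g^h=h^{-1}gh$. I would sanity-check the direction of this conjugation against the degree rule for the modules $(a,\chi)$ in Notation \ref{not:modulesOverDouble}.

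For part (1), the inclusion $A_e\cdot h\subseteq A_{e^h}=A_e$ shows immediately that the restricted $G$-action preserves the identity component $A_e$, which is the first assertion. For the invariant identity I would decode the defining condition $a\cdot d=\epsilon(d)\,a$ on the basis elements $d=\phi_g h$. Since $\phi_g h=(\phi_g\otimes e)(1\otimes h)$ and $\epsilon(\phi_g h)=\delta_{g,e}$, this reads $a_g\cdot h=\delta_{g,e}\,a$ for all $g,h\in G$, where $a_g:=a\cdot\phi_g$. Taking $h=e$ forces $a_g=\delta_{g,e}\,a$, so $a=a_e\in A_e$; the remaining case $g=e$ then says $a\cdot h=a$ for all $h$, i.e. $a\in(A_e)^G$. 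The converse is immediate, giving $A^{D(G)}=(A_e)^G$.

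For part (2), I would start from $G$-invariance $f\cdot h=f$ and compare graded components. Writing $f=\sum_g f_g$, the compatibility $A_g\cdot h\subseteq A_{g^h}$ shows that the degree-$g'$ component of $f\cdot h$ is $f_{hg'h^{-1}}\cdot h$, so equating with the degree-$g'$ component of $f$ gives the key relation $f_{hg'h^{-1}}\cdot h=f_{g'}$ for all $g',h$. Now fix a conjugacy class $X$ and set $f_X=\sum_{x\in X}f_x$. Computing the degree-$g'$ component of $f_X\cdot h$, only the term with $hg'h^{-1}\in X$ survives; since $X$ is closed under conjugation this happens exactly when $g'\in X$, in which case the component equals $f_{hg'h^{-1}}\cdot h=f_{g'}$. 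Thus $f_X\cdot h$ and $f_X$ have identical graded components, so $f_X\in A^G$; specialising to $X=\{e\}$ yields $f_e\in A^G$.

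The calculations in both parts are routine once the setup is in place, so the only real obstacle is the first paragraph: correctly isolating the grading and the action inside the single $D(G)$-module structure and pinning down the commutation relation $h\phi_{g'}=\phi_{hg'h^{-1}}h$ together with the precise direction of the induced grading shift $A_g\cdot h\subseteq A_{g^h}$. Keeping the right-module conventions and the conjugation direction consistent with Notation \ref{not:modulesOverDouble} is where care is needed; after that, both statements follow by matching $G$-graded components.
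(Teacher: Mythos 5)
Your proof is correct and follows essentially the same route as the paper's: both arguments rest on the commutation relation $h\phi_{g'}=\phi_{hg'h^{-1}}h$ in $D(G)$ together with the decomposition of $A$ by the orthogonal idempotents $\phi_g$. You are in fact slightly more complete than the paper on part (1), where the paper only verifies that the $G$-action preserves $A_e$ and leaves the identity $A^{D(G)}=(A_e)^G$ implicit, whereas you decode the counit condition $a\cdot(\phi_g h)=\delta_{g,e}\,a$ explicitly.
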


\begin{proof}
Let $f \in A_e$.  In particular, we have that $\phi_e f = f$.  Then for any $g \in G$, we have that
$f.g = (f\phi_e).g = f.(\phi_e g) = f.(g\phi_e) = (f.g)\phi_e \in A_e$, so that the first claim is justified.
Next, let $f = \sum_{g \in G} f_g \in A^G$, let $X$ be a conjugacy class of $G$, and let
$f_X = \sum_{x \in X} f_x$ be as in the second claim.  Then for all $g \in G$, one has that
$$f_X.g = \left(\sum_{x\in X}f.\phi_x\right).g = \sum_{x \in X} f.(\phi_x g)
        = \sum_{x \in X} (f.g)\phi_{g^{-1}xg} = \sum_{x \in X} f.\phi_{g^{-1}xg} = f_X.$$
\end{proof}

\begin{definition}
A Hopf algebra $H$ is said to act {\it inner faithfully} on an $H$-module $V$ if there is no non-zero Hopf ideal $I$ with $V I = 0$.
\end{definition}
We make use of the following theorem to show that a $D(G)$-module is inner faithful.
\begin{theorem}\cite[Theorem 1.4]{FKMW1}
Let $V$ be a module over a finite-dimensional semisimple Hopf algebra $H$. Then the following conditions are equivalent.
\begin{enumerate}
\item[(1)] $V$ is an inner faithful $H$-module,
\item[(2)] The tensor algebra $T(V )$ is a faithful $H$-module,
\item[(3)] Every simple $H$-module appears as a direct summand of $V^{\otimes n}$ for some $n$.
\end{enumerate}
\end{theorem}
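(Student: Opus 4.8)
The plan is to establish the cycle $(2)\Leftrightarrow(3)$, $(2)\Rightarrow(1)$, and the substantial implication $(1)\Rightarrow(3)$, throughout exploiting that $H$ is semisimple and finite-dimensional. First I would dispose of $(2)\Leftrightarrow(3)$: as $H$-modules one has $T(V)=\bigoplus_{n\ge 0}V^{\otimes n}$, and since $H$ is semisimple a module is faithful precisely when every simple $H$-module occurs as a direct summand of it. Hence $T(V)$ is faithful iff every simple occurs in $\bigoplus_n V^{\otimes n}$, i.e. iff every simple occurs in some single $V^{\otimes n}$, which is exactly $(3)$. The implication $(2)\Rightarrow(1)$ (equivalently $(3)\Rightarrow(1)$) is also quick: if $I$ is a nonzero Hopf ideal with $VI=0$, then $V$ is a module over the quotient Hopf algebra $\bar H=H/I$, hence so is each $V^{\otimes n}$ (its $\bar H$-action being via the comultiplication of $\bar H$), and therefore $V^{\otimes n}I=0$ for all $n$; thus $T(V)I=0$ and $T(V)$ is not faithful. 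Contrapositively, $(2)$ forces $V$ to be inner faithful.

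The heart of the matter is $(1)\Rightarrow(3)$, which I would prove by contraposition: assuming some simple fails to appear in any $V^{\otimes n}$, I will produce a nonzero Hopf ideal annihilating $V$. Write $\rho_n\colon H\to\End_\Bbbk(V^{\otimes n})$ for the representation afforded by $V^{\otimes n}$ and set $J=\ann_H(T(V))=\bigcap_{n\ge 0}\ker\rho_n$. By semisimplicity $J$ is the sum of the matrix blocks of $H$ indexed by the simples not occurring in any $V^{\otimes n}$, so by assumption $J\neq 0$; it is a two-sided ideal contained in $\ann_H(V)$, and since the trivial module $V^{\otimes 0}=\Bbbk$ is a summand of $T(V)$ we also get $J\subseteq\ker\epsilon$.

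The key non-formal point is the coideal property. Here I would use that the module structure on $V^{\otimes(m+n)}=V^{\otimes m}\otimes V^{\otimes n}$ satisfies $\rho_{m+n}=(\rho_m\otimes\rho_n)\circ\Delta$, so that for $h\in J$ one has $(\rho_m\otimes\rho_n)(\Delta h)=\rho_{m+n}(h)=0$ for all $m,n$. Since $\bigcap_k\ker\rho_k=J$, the induced map $H/J\hookrightarrow\prod_k\End_\Bbbk(V^{\otimes k})$ is injective; as $H/J$ is finite-dimensional it already embeds into a finite sub-product, and tensor products of injections of $\Bbbk$-vector spaces are injective, so $(H/J)\otimes(H/J)$ embeds as well. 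The vanishing of every component $(\rho_m\otimes\rho_n)(\Delta h)$ then forces the image of $\Delta h$ in $(H/J)\otimes(H/J)$ to be zero, i.e. $\Delta h\in J\otimes H+H\otimes J$. Thus $J$ is simultaneously a two-sided ideal and a coideal.

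The remaining step, antipode-stability $S(J)\subseteq J$, is the one I expect to be the main obstacle, because it is precisely where duals intervene. Passing to dual modules replaces $\ann_H(W)$ by $S^{-1}(\ann_H(W))$, which shows that $S$ permutes the matrix blocks of $H$ by $T\mapsto T^{*}$; hence $S(J)\subseteq J$ is equivalent to the assertion that the collection of simples appearing among the $V^{\otimes n}$ is closed under taking duals. Equivalently, the sub-bialgebra of $H^{*}$ generated by the matrix coefficients of $V$ must in fact be a Hopf subalgebra. In the present finite-dimensional, semisimple, characteristic-zero setting this holds, since a sub-bialgebra of a finite-dimensional Hopf algebra is automatically a Hopf subalgebra; concretely, exactly as in the classical Burnside argument the antipode of a matrix coefficient of $V$ is expressible through coefficients of $V$ itself, so the coefficients of $V^{*}$ already occur among those of the tensor powers of $V$. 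Granting this, $J$ is a genuine nonzero Hopf ideal with $VJ=0$, contradicting inner faithfulness of $V$ and completing $(1)\Rightarrow(3)$.
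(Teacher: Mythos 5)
The paper does not actually prove this statement: it is imported verbatim as \cite[Theorem 1.4]{FKMW1}, whose own proof leans on Rieffel's Burnside-type theorem \cite{R}, so there is no internal argument to measure yours against. Your proposal is the standard route and is essentially correct. The equivalence $(2)\Leftrightarrow(3)$ via semisimplicity, the implication $(2)\Rightarrow(1)$ (which tacitly uses that a Hopf ideal is a coideal, so that $\Delta^{(n-1)}(I)$ lands in $\sum H^{\otimes j}\otimes I\otimes H^{\otimes(n-1-j)}$ --- worth saying explicitly), and the identification of $J=\ann_H(T(V))$ as a nonzero two-sided ideal contained in $\ker\epsilon$, together with your $(H/J)\otimes(H/J)$ embedding argument for the coideal property, are all sound.

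The one step you assert rather than prove is the one you correctly flag as the crux: $S(J)\subseteq J$, equivalently that the sub-bialgebra $B\subseteq H^*$ generated by the coefficient space of $V$ is a Hopf subalgebra. The fact you invoke --- every sub-bialgebra of a finite-dimensional Hopf algebra is a Hopf subalgebra --- is true, and it needs neither semisimplicity nor characteristic zero, only that $B$ is finite-dimensional; but as written your proof rests on an uncited appeal to it. Here is the short argument that closes it, and it is precisely the Cayley--Hamilton-style reasoning you gesture at. The inclusion $\iota\colon B\hookrightarrow H^*$ is convolution-invertible in $\Hom_\Bbbk(B,H^*)$ with inverse $S_{H^*}\circ\iota$, since $\Delta_{H^*}$ restricts to $\Delta_B$ on the subcoalgebra $B$. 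Now $\Hom_\Bbbk(B,B)$ is a finite-dimensional convolution subalgebra of $\Hom_\Bbbk(B,H^*)$ containing $\iota$ (it contains the convolution unit because $\epsilon_H=1_{H^*}\in B$, and is closed under convolution because $B$ is a subalgebra of $H^*$). Let $p$ be the minimal polynomial of $\iota$ over $\Bbbk$ inside $\Hom_\Bbbk(B,B)$. If $p(0)=0$ then $\iota\, q(\iota)=0$ with $q(\iota)\neq 0$, making $\iota$ a zero divisor in $\Hom_\Bbbk(B,H^*)$ and contradicting its invertibility; hence $p(0)\neq 0$ and $\iota^{-1}\in\Bbbk[\iota]\subseteq\Hom_\Bbbk(B,B)$. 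By uniqueness of convolution inverses, $S_{H^*}(B)\subseteq B$, which dualizes to $S(J)\subseteq J$. With that paragraph inserted (or with a citation for the sub-bialgebra lemma), your proof is complete and, unlike the one the paper points to, self-contained.
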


The category of simple $D(G)$-modules is a modular tensor category, and the  fusion relations for irreducible $D(G)$-modules can be found using the $S$-matrix for the category, as described in \cite{CGR} (see also \cite{W1,W2}) using group characters.

For a general modular tensor category the ``Verlinde formula"
$$N_{V_a, V_b}^{V_c} =  \sum_{V_i} \frac{S_{a, i} S_{b,i} \overline{S_{c, i}}}{S_{0, i}}$$
gives the multiplicity of the simple module $V_c$ in $V_a \otimes V_b$ from the entries in the $S$ matrix. For the Drinfeld double of a group algebra, the entry in the $S$ matrix corresponding to the simple modules $V_a = (a, \chi), V_b= (b,\chi')$ can be computed from the character table  and centralizers by the formula
$$S_{(a,\chi),(b,\chi')} = \frac{1}{|C(a)||C(b)|} \sum \overline{\chi(gbg^{-1})} \hspace*{.1in}\overline{\chi'(g^{-1}ag)}, $$
where the sum is over the set $\{g: agbg^{-1}=gbg^{-1}a\}$. These formulas were used in some of our early computations, but, as we need the generators of the summands, these formulas are not used here.

In this paper we consider the 
{ dicyclic groups $\mathfrak{D}_{2n}$}.   
  Following the notation in \cite{CGR} let $G=\mathfrak{D}_{2n}$ be the dicyclic group of order $4n$
$$\mathfrak{D}_{2n} = \langle s,r: r^{2n} = e, \; s^2= r^n, \; r s r s^{-1} = e \rangle.$$
This group is also called the ``binary dihedral" group 
 \cite{Wiki1};
 ~it is also called the ``generalized quaternion" group when $n$ is a power of $2$ \cite{Wiki2}.
The group has $n+3$ conjugacy classes with representatives
$$\{e,\; r^k \;(1 \leq k \leq n), \;s, \;sr\}.$$
It has 4 one-dimensional representations $\psi_i$ $(i = 0 \dots, 3)$ and $n-1$ two-dimensional representations $\chi_i$ ($i = 1,\dots, n-1$).  There are $2n^2+14$ simple $D(\mathfrak{D}_{2n})$-modules (also referred to as ``primaries").
The irreducible characters of $\mathfrak{D}_{2n}$ and the centralizers of the representative of each conjugacy class are given in  Table \ref{chars} from
\cite{CGR}, where $\iota = 1$ when $n$ is even and  $\iota = \i$ (primitive 4th root of 1) when $n$ is odd.

\begin{table}[H]
\caption{Character Table for $\mathfrak{D}_{2n}$}
\begin{center}
 \begin{tabular}{|c|c|c|c|c|c|}
\hline
 $\mathfrak{D}_{2n}$& $e$ &$ r^n$ & $r^k$ & $s$ & $sr$\\
\hline
\hline
$\psi_0$ & 1 & 1 & 1& 1& 1\\
\hline
$\psi_1$ & 1 & 1 & 1& $-1$& $-1$\\
\hline
$\psi_2$ & 1 &$ (-1)^n$ & $(- 1)^k$& $\iota$ & $-\iota$\\
\hline
$\psi_3$ & 1 &$ (-1)^n$ & $(- 1)^k$& $-\iota$ & $\iota$\\
\hline
$\chi_i$ & 2 &$ 2(-1)^i$ & $2\cos(\pi i k/n)$&0& 0\\
\hline
\hline
	$C_G(a)$ &$ \mathfrak{D}_{2n}$ & $ \mathfrak{D}_{2n}$& $\mathbb{Z}_{2n}$ & $\mathbb{Z}_4$ & $\mathbb{Z}_4$ \\
\hline
\end{tabular}
\end{center}
\label{chars}
\end{table}
\end{section}

\begin{section}{\texorpdfstring{$D(\mathfrak{D}_{4})$}{D(D4)}, the case \texorpdfstring{$n=2$}{n=2}: the quaternion group of order 8}
\label{dicyclic-n-2}
~\\
In this section we begin the consideration of the double of the
quaternion group.  We will use the simple $D(\mathfrak{D}_{4})$-modules to 
construct a $D(\mathfrak{D}_{4})$-module $V$ that can provide an  action of
$D(\mathfrak{D}_{4})$ on the generators of an algebra $B$ so that the action on 
$B$ is inner faithful.  There are 22 simple $D(\mathfrak{D}_{4})$-modules
$V_i$ for $i = 0, \dots, 21$, where $V_0$ is the trivial
$\mathfrak{D}_{4}$-module.

The conjugacy classes (with choice of representative $a$), 
centralizers, and representations are given in Table \ref{simpleDD4}.
Indeed, $\psi_j$ has action of $r$ and $s$ as given in
Table \ref{chars}, and $\chi$ has action given by
$u.r = v$, $v.r = -u$, $u.s = \i u$ and $v.s = \i v$.

In addition, one has that $\alpha_j$ has action given by $t.r = \i^jt$,
$\beta_j$ has action given by $t.s = \i^jt$, and $\gamma_j$ has action given
by $t.(sr) = \i^jt$, where $\i$ is a primitive fourth root of unity, and
$0 \leq j \leq 3$.
\begin{table}[H]
\caption{Simple $D(\mathfrak{D}_4)$-modules}
\begin{center}
 \begin{tabular}{|c|c|c|c|}
\hline
  Conj class & $C_G(a)$ & Rep $C_G(a)$& transversal\\
\hline
\hline
$\{a=e\}$ & $\mathfrak{D}_{4}$ & $\psi_0, \psi_1, \psi_2, \psi_3, \chi$& $\{e\}$\\
\hline
 &  & $V_0, V_1, V_2, V_3, V_4$& \\
\hline \hline
$\{a=r^2\}$ & $\mathfrak{D}_{4}$ & $\psi_0, \psi_1, \psi_2, \psi_3, \chi$& $\{e\}$\\
\hline
 &  & $V_5, V_6, V_7, V_8, V_9$ & \\
\hline \hline
$\{a=r, r^3\}$ & $\mathbb{Z}_4 = \langle r \rangle $ & $\alpha_0, \alpha_1, \alpha_2, \alpha_3$&$\{e, s\}$ \\
\hline
 &  & $V_{10}, V_{11}, V_{12}, V_{13}$ & \\
\hline \hline
$\{a=s, sr^2\}$ & $\mathbb{Z}_4= \langle s \rangle $ & $\beta_0, \beta_1, \beta_2, \beta_3$ & $\{e, r\}$\\
\hline
&  & $V_{14}, V_{15}, V_{16}, V_{17}$ & \\
\hline \hline
$\{a=sr, sr^3\}$ & $\mathbb{Z}_4 = \langle sr \rangle $ & $\gamma_0, \gamma_1, \gamma_2, \gamma_3$&$\{e, s\}$ \\
\hline
&  & $V_{18}, V_{19}, V_{20}, V_{21}$ & \\
\hline
\end{tabular}
\end{center}
\end{table}
\label{simpleDD4}

In Table \ref{irrepsD4} we give the explicit choices 
of representations of $D(\mathfrak{D}_{4})$ that we 
will use in our analysis.  Recall that by Notation 
\ref{not:modulesOverDouble}, each simple
$D(\mathfrak{D}_4)$-module arises as the
$\mathfrak{D}_4$-module induced by an irreducible
representation of the centralizer of an element of
$\mathfrak{D}_4$, with a suitable grading arising
from the conjugation action.

The columns marked
$r$ and $s$ indicate the action of $r$ and $s$ on the basis elements, which are in the column labeled ``Gen".  The group grade of the generator is given in the column labeled ``Gr".
\begin{table}[H]
\caption{Explicit action of $D(\mathfrak{D}_4)$ on its simple modules}
\begin{minipage}[t]{.45\textwidth}
\strut\vspace*{-\baselineskip}\newline
\begin{center}
\begin{tabular}{|c|c|c|c|c|c|} \hline
Class & $V_i$ & Gen. & Gr. & $r$ & $s$ \\ \hline\hline
{\multirow{6}{*}{$[e]$}}
 & $V_0$ & $u$ & $e$ & $u$ & $u$ \\ \cline{2-6}
 &  $V_1$ & $u$ & $e$ & $u$ & $-u$ \\ \cline{2-6}
 &  $V_2$ & $u$ & $e$ & $-u$ & $u$ \\ \cline{2-6}
 &  $V_3$ & $u$ & $e$ & $-u$ & $-u$ \\ \cline{2-6}
 &  {\multirow{2}{*}{$V_4$}} 
       &  $u$ & $e$ & $v$ & $\i u$ \\
      && $v$ & $e$ & $-u$ & $-\i v$ \\ \hline\hline
{\multirow{6}{*}{$[r^2]$}} &
     $V_5$ & $u$ & $r^2$ & $u$ & $u$ \\ \cline{2-6}
 &  $V_6$ & $u$ & $r^2$ & $u$ & $-u$ \\ \cline{2-6}
 &  $V_7$ & $u$ & $r^2$ & $-u$ & $u$ \\ \cline{2-6}
 &  $V_8$ & $u$ & $r^2$ & $-u$ & $-u$ \\ \cline{2-6}
 &  {\multirow{2}{*}{$V_9$}} 
      &  $u$ & $r^2$ & $v$ & $\i u$ \\
     && $v$ & $r^2$ & $-u$ & $-\i v$ \\ \hline\hline
{\multirow{8}{*}{$[r]$}} &
     {\multirow{2}{*}{$V_{10}$}}
      &  $u$ & $r$   & $u$ & $v$ \\ 
     && $v$ & $r^3$ & $v$ & $u$ \\ \cline{2-6}
 &  {\multirow{2}{*}{$V_{11}$}}
      &  $u$ & $r$   & $\i u$  & $v$ \\ 
     && $v$ & $r^3$ & $-\i v$ & $-u$ \\ \cline{2-6}
 &  {\multirow{2}{*}{$V_{12}$}}
      &  $u$ & $r$   & $-u$ & $v$ \\ 
     && $v$ & $r^3$ & $-v$ & $u$ \\ \cline{2-6}
 &  {\multirow{2}{*}{$V_{13}$}}
      &  $u$ & $r$   & $-\i u$ & $v$ \\ 
     && $v$ & $r^3$ & $\i v$ & $-u$ \\ \hline
\end{tabular}
\end{center}
\end{minipage}
\:\:
\begin{minipage}[t]{.45\textwidth}
\strut\vspace*{-\baselineskip}\newline
\begin{center}
\begin{tabular}{|c|c|c|c|c|c|} \hline
Class & $V_i$ & Gen. & Gr. & $r$ & $s$ \\ \hline\hline
{\multirow{8}{*}{$[s]$}} &
     {\multirow{2}{*}{$V_{14}$}}
      &  $u$ & $s$    & $v$ & $u$ \\ 
     && $v$ & $sr^2$ & $u$ & $v$ \\ \cline{2-6}
 &  {\multirow{2}{*}{$V_{15}$}}
      &  $u$ & $s$    & $v$ & $\i u$ \\ 
     && $v$ & $sr^2$ & $-u$ & $-\i v$\\ \cline{2-6}
     &  {\multirow{2}{*}{$V_{16}$}}
      &  $u$ & $s$    & $  v$ & $-u $ \\ 
     && $v$ & $sr^2$ &$ u$ & $- v$ \\ \cline{2-6}
 &  {\multirow{2}{*}{$V_{17}$}}
      &  $u$ & $s$    & $v$ & $-\i u$ \\ 
     && $v$ & $sr^2$ & $-u$ & $\i v$ \\ \hline\hline
{\multirow{8}{*}{$[sr]$}} &
     {\multirow{2}{*}{$V_{18}$}}
      &  $u$ & $sr$   & $v$ & $v$ \\ 
     && $v$ & $sr^3$ & $u$ & $u$ \\ \cline{2-6}
 &  {\multirow{2}{*}{$V_{19}$}}
      &  $u$ & $sr$   & $\i v$ & $v$ \\ 
     && $v$ & $sr^3$ & $\i u$ & $-u$ \\ \cline{2-6}
 &  {\multirow{2}{*}{$V_{20}$}}
      &  $u$ & $sr$   & $-v$ & $v$ \\ 
     && $v$ & $sr^3$ & $-u$ & $u$ \\ \cline{2-6}
 &  {\multirow{2}{*}{$V_{21}$}}
      &  $u$ & $sr$   & $-\i v$ & $v$ \\ 
     && $v$ & $sr^3$ & $-\i u$ & $-u$ \\ \hline
\end{tabular}
\end{center}
\end{minipage}
\label{irrepsD4}
\end{table}

Consider $V_{17} = (s, \beta_3), V_{20} = (sr, \gamma_2), V_{21}=(sr, \gamma_3)$.
A computation (some of the details of which are given below) shows that for this choice of 
$V = V_{17} \oplus V_{20} \oplus V_{21}$, all irreducible $D(\mathfrak{D}_{4})$-modules 
occur as direct summands of $V, V^{\otimes 2},$ $V^{\otimes 3}$ or
$V^{\otimes 4}$; see Proposition \ref{inner-faithful-n-2}.

In what follows, we decompose $V \otimes V$ for
$V = V_{17} \oplus V_{20} \oplus V_{21}$,
where our choice of basis elements for the simple modules are: $V_{17} = \Span_{\Bbbk}\{x_1,x_2\}$, $V_{19} = \Span_{\Bbbk}\{y_1,y_2\}$ and
$V_{20} = \Span_{\Bbbk}\{z_1,z_2\}$.  To compute $V \otimes V$ we first compute  the 9 possible tensor products of the simple $D(\mathfrak{D}_{4})$-modules in the decomposition of $V$. The tensor product symbols are suppressed in the description of the 
generators, and the generators are listed in the same order ($u,v$) that they
appear in Table \ref{tensorSquareD4}. The column $V_i$ gives the direct summands that occur in tensor product of the indicated simple $D(\mathfrak{D}_{4})$-modules.
\begin{table}[H]
\caption{Decomposition of $(V_{17} \oplus V_{20} \oplus V_{21})^{\otimes 2}$}
\begin{minipage}[t]{.45\textwidth}
\strut\vspace*{-\baselineskip}\newline
\begin{center}
\begin{tabular}{|c|c|c|}
\hline
Tensor & $V_i$ & Generators \\ \hline\hline
\multirow{4}{*}{$V_{17} \otimes V_{17}$}
  & $V_0$ & $x_1x_2 - x_2x_1$ \\ \cline{2-3}
  & $V_2$ & $x_1x_2 + x_2x_1$ \\ \cline{2-3}
  & $V_6$ & $x_1^2 + x_2^2$ \\ \cline{2-3}
  & $V_8$ & $x_1^2 - x_2^2$ \\ \hline\hline
\multirow{4}{*}{$V_{20} \otimes V_{20}$}
  & $V_0$ & $y_1y_2 + y_2y_1$ \\ \cline{2-3}
  & $V_3$ & $y_1y_2 - y_2y_1$ \\ \cline{2-3}
  & $V_5$ & $y_1^2 + y_2^2$ \\ \cline{2-3}
  & $V_8$ & $y_1^2 - y_2^2$ \\ \hline\hline
\multirow{4}{*}{$V_{21} \otimes V_{21}$}
  & $V_0$ & $z_1z_2 - z_2z_1$ \\ \cline{2-3}
  & $V_3$ & $z_1z_2 + z_2z_1$ \\ \cline{2-3}
  & $V_6$ & $z_1^2 - z_2^2$ \\ \cline{2-3}
  & $V_7$ & $z_1^2 + z_2^2$ \\ \hline\hline
\multirow{4}{*}{$V_{17} \otimes V_{20}$}
  & \multirow{2}{*}{$V_{11}$} & $x_2y_1 - \i x_1y_2$ \\ \cline{3-3} 
  &                           & $\i x_2y_2 - x_1y_1$ \\ \cline{2-3}
  & \multirow{2}{*}{$V_{13}$} & $x_2y_1 + \i x_1y_2$ \\ \cline{3-3} 
  &                           & $\i x_2y_2 +  x_1y_1$ \\ \hline\hline
\multirow{4}{*}{$V_{20} \otimes V_{17}$}
  & \multirow{2}{*}{$V_{11}$} & $y_1x_1 + \i y_2x_2$ \\ \cline{3-3} 
  &                           & $-\i y_2x_1 - y_1x_2$ \\ \cline{2-3}
  & \multirow{2}{*}{$V_{13}$} & $y_1x_1 - \i y_2x_2$ \\ \cline{3-3} 
  &                           & $-\i y_2x_1 +  y_1x_2$ \\ \hline
\end{tabular}
\end{center}
\end{minipage}
\begin{minipage}[t]{.45\textwidth}
\strut\vspace*{-\baselineskip}\newline
\begin{center}
\begin{tabular}{|c|c|c|}
\hline
Tensor & $V_i$ & Generators \\ \hline\hline
\multirow{4}{*}{$V_{17} \otimes V_{21}$}
  & \multirow{2}{*}{$V_{10}$} & $x_2z_1 + \i x_1z_2$ \\ \cline{3-3} 
  &                           & $\i x_2z_2 - x_1z_1$ \\ \cline{2-3}
  & \multirow{2}{*}{$V_{12}$} & $x_2z_1 - \i x_1z_2$ \\ \cline{3-3} 
  &                           & $\i x_2z_2 +  x_1z_1$ \\ \hline\hline
\multirow{4}{*}{$V_{21} \otimes V_{17}$}
  & \multirow{2}{*}{$V_{10}$} & $\i z_1x_1 + z_2x_2$ \\ \cline{3-3} 
  &                           & $z_2x_1 - \i z_1x_2$ \\ \cline{2-3}
  & \multirow{2}{*}{$V_{12}$} & $\i z_1x_1 - z_2x_2$ \\ \cline{3-3} 
  &                           & $z_2x_1 +  \i z_1x_2$ \\ \hline\hline
\multirow{4}{*}{$V_{20} \otimes V_{21}$}
  & \multirow{2}{*}{$V_{4}$} & $y_1z_2 + \i y_2z_1$ \\ \cline{3-3} 
  &                          & $\i y_2z_1 - y_1z_2$ \\ \cline{2-3}
  & \multirow{2}{*}{$V_{9}$} & $y_2z_2 +\i y_1z_1$ \\ \cline{3-3} 
  &                          & $\i y_1z_1 - y_2z_2$ \\ \hline\hline
\multirow{4}{*}{$V_{21} \otimes V_{20}$}
  & \multirow{2}{*}{$V_{4}$} & $z_2y_1 + \i z_1y_2$ \\ \cline{3-3} 
  &                          & $\i z_1y_2 - z_2y_1$ \\ \cline{2-3}
  & \multirow{2}{*}{$V_{9}$} & $z_2y_2 +\i z_1y_1$ \\ \cline{3-3} 
  &                          & $\i z_1y_1 - z_2y_2$ \\ \hline
\end{tabular}
\end{center}
\end{minipage}
\label{tensorSquareD4}
\end{table}
\vspace*{1em}

We will use Lemma \ref{DQ4decomps} that gives the additional tensor product decompositions needed in our proof that $V$ is an inner faithful
representation of $D(\mathfrak{D}_{4})$.  Its proof is a computation that can be obtained from the $S$-matrix of $D(\mathfrak{D}_{4})$ or by a direct computation that 
we leave to the reader.
\begin{lemma} \label{DQ4decomps}
One has the following direct sum decompositions:
\begin{eqnarray*}
V_4 \otimes V_{20} & \cong & V_{19} \oplus V_{21}, \\
V_{11} \otimes V_{17} & \cong & V_{18} \oplus V_{20}, \\
V_3 \otimes V_{17} & \cong & V_{15}, \\
V_4 \otimes V_{17} & \cong & V_{14} \oplus V_{16}, \\
V_2 \otimes V_3 & \cong & V_1. \\
\end{eqnarray*}
\end{lemma}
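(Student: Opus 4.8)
The plan is to prove each isomorphism by a direct computation on the explicit bases recorded in Table \ref{irrepsD4}, leaning on two structural facts about $D(\mathfrak{D}_4)$-modules that follow from Notation \ref{Drinfeld-double}. First, every group element $g \in \mathfrak{D}_4$, viewed inside $D(G)$ as $\sum_{h}\phi_h g$, is grouplike: a short computation gives $\Delta(g) = g \otimes g$. Hence $r$ and $s$ act \emph{diagonally} on a tensor product, $(v \otimes w).g = (v.g)\otimes(w.g)$. Second, the $\mathfrak{D}_4$-grading is \emph{multiplicative}: if $v$ is homogeneous of degree $g_v$ and $w$ of degree $g_w$, then $v \otimes w$ is homogeneous of degree $g_v g_w$; this follows from $\Delta(\phi_g) = \sum_x \phi_x \otimes \phi_{x^{-1}g}$, exactly as in Notation \ref{not:modulesOverDouble}. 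These two facts let me compute in each tensor product entirely from the one-dimensional data in Table \ref{irrepsD4}.

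For each decomposition I would then carry out the following steps. Form the basis $\{v\otimes w\}$ of the tensor product and record, for each basis vector, (i) its $\mathfrak{D}_4$-degree, computed as a product in $\mathfrak{D}_4$ using $rs = sr^{-1}$ and the centrality of $r^2 = s^2$, and (ii) the diagonal $r$- and $s$-actions. The set of degrees that appears is automatically a union of conjugacy classes, and this already pins down the conjugacy-class label of every simple summand (for example, degrees lying in $\{sr, sr^3\}$ force all summands into the class $[sr]$). In the one-dimensional case this finishes the identification immediately: in $V_2 \otimes V_3$ the generator has degree $e$, with $r$ acting by $(-1)(-1)=1$ and $s$ by $(1)(-1)=-1$, which is exactly $V_1$ by Table \ref{irrepsD4}.

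For the remaining (four-dimensional) tensor products I would next restrict to a single graded component and diagonalize the action of the centralizer generator of the relevant class --- $s$ for the class $[s]$ and $sr$ for the class $[sr]$ --- on that component. Since the centralizer is cyclic of order $4$, its generator acts on each graded line by a fourth root of unity $\i^{\,j}$, and the exponent $j$ is precisely the label of the centralizer character ($\beta_j$ or $\gamma_j$), hence of the simple module. For instance, in $V_4 \otimes V_{20}$ all four tensor basis vectors have degree in $\{sr, sr^3\}$, and on the degree-$sr$ plane the generator $sr$ acts by $\left(\begin{smallmatrix} 0 & -\i \\ -\i & 0\end{smallmatrix}\right)$, with eigenvalues $\i = \i^{1}$ and $-\i = \i^{3}$; these give the characters $\gamma_1$ and $\gamma_3$, i.e. the summands $V_{19}$ and $V_{21}$. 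Each eigenvector, together with its image under $r$ (which lands in the complementary degree-$sr^3$ line, as $r$ conjugates the degree), spans a two-dimensional simple submodule, and by semisimplicity of $D(\mathfrak{D}_4)$ the tensor product is the direct sum of the two simples so identified. The same recipe handles $V_{11}\otimes V_{17} \cong V_{18}\oplus V_{20}$ and $V_4 \otimes V_{17} \cong V_{14}\oplus V_{16}$, while $V_3 \otimes V_{17} \cong V_{15}$ is the degenerate two-dimensional case in which the single graded-component computation is merely a relabeling of basis vectors.

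The computations are routine, so the only real obstacle is bookkeeping: one must compute the degree products in $\mathfrak{D}_4$ correctly and track the signs and powers of $\i$ carefully, because the candidate summands within a fixed conjugacy class (for example $V_{18}$ versus $V_{20}$, $V_{14}$ versus $V_{16}$, or $V_{19}$ versus $V_{21}$) differ \emph{only} by the eigenvalue of the centralizer generator. An alternative, purely numerical, route is to compute the multiplicities $N_{ab}^{c}$ directly from the Verlinde formula using the displayed $S$-matrix entries $S_{(a,\chi),(b,\chi')}$; since $D(\mathfrak{D}_4)$ is semisimple, knowing all multiplicities determines each tensor product up to isomorphism, reproducing the five stated decompositions.
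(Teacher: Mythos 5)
Your proposal is correct, and it matches the paper's approach: the paper explicitly leaves Lemma \ref{DQ4decomps} as ``a direct computation\ldots that we leave to the reader'' (or alternatively via the $S$-matrix), and your argument is precisely that direct computation, organized cleanly via the grouplike property of group elements in $D(G)$, multiplicativity of the group grade on tensor products, and diagonalization of the centralizer generator on a single graded component. I spot-checked the $V_2\otimes V_3$, $V_4\otimes V_{20}$, and $V_3\otimes V_{17}$ cases against Table \ref{irrepsD4} and your identifications of the summands are accurate.
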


\begin{proposition}\label{inner-faithful-n-2}
    The representation $V=V_{17}\oplus V_{20}\oplus V_{21}$ is an inner faithful
    representation of $D(\mathfrak{D}_{4})$.
\end{proposition}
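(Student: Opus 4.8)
The plan is to apply the equivalence recalled above from \cite[Theorem 1.4]{FKMW1}: since $D(\mathfrak{D}_4)$ is semisimple, $V$ is inner faithful if and only if every one of the $22$ simple modules $V_0,\dots,V_{21}$ occurs as a direct summand of $V^{\otimes n}$ for some $n$. Thus the whole task reduces to exhibiting each $V_i$ inside some tensor power of $V$, and I would organize the argument by tracking which simples first appear in tensor degrees $1$, $2$, $3$, and $4$.

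First I would record the summands available in low degree directly from the tables. In degree $1$, the module $V=V_{17}\oplus V_{20}\oplus V_{21}$ contributes $V_{17}$, $V_{20}$, and $V_{21}$. In degree $2$, collecting the nine tensor products listed in Table~\ref{tensorSquareD4} yields the summands $V_0, V_2, V_3, V_4, V_5, V_6, V_7, V_8, V_9, V_{10}, V_{11}, V_{12}, V_{13}$. At this point the only simples not yet accounted for are $V_1, V_{14}, V_{15}, V_{16}, V_{18}, V_{19}$, so these are exactly what remains to be produced in higher degree.

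Next I would obtain the six missing simples using Lemma~\ref{DQ4decomps} together with the degree-$2$ summands already found. Since $V_3$, $V_4$, and $V_{11}$ each appear in $V^{\otimes 2}$, and $V_{17}, V_{20}$ are summands of $V$, the modules $V_4\otimes V_{20}$, $V_{11}\otimes V_{17}$, $V_3\otimes V_{17}$, and $V_4\otimes V_{17}$ are all summands of $V^{\otimes 3}$; by the lemma these contain $V_{19}$, $V_{18}$, $V_{15}$, and the pair $V_{14}, V_{16}$ respectively. The sole remaining module is $V_1$, which by the last relation of the lemma is $V_2\otimes V_3$; as both $V_2$ and $V_3$ occur in $V^{\otimes 2}$, this is a summand of $V^{\otimes 4}$. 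Hence every simple appears in $V\oplus V^{\otimes 2}\oplus V^{\otimes 3}\oplus V^{\otimes 4}$, and inner faithfulness follows from the cited theorem.

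There is no conceptual obstacle here; once the tensor decompositions of Table~\ref{tensorSquareD4} and Lemma~\ref{DQ4decomps} are in hand, the argument is entirely bookkeeping. The one point requiring care is the nesting of tensor powers: a product such as $V_4\otimes V_{20}$ lands inside $V^{\otimes 3}$ only because $V_4$ was already realized as a summand of $V^{\otimes 2}$ while $V_{20}$ is a summand of $V$. I would therefore verify that each factor drawn from Lemma~\ref{DQ4decomps} has already been placed in a strictly earlier tensor power, so that the degree bound $n\le 4$ genuinely holds for every simple module.
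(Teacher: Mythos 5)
Your proposal is correct and follows essentially the same route as the paper's own proof: read off the degree-two summands from Table \ref{tensorSquareD4}, obtain $V_{14},V_{15},V_{16},V_{18},V_{19}$ in degree three via Lemma \ref{DQ4decomps}, and capture $V_1=V_2\otimes V_3$ in degree four. Your explicit check that each tensor factor drawn from the lemma has already been placed in a lower tensor power is a sensible precaution that the paper leaves implicit.
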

\begin{proof}
By Table \ref{tensorSquareD4}, the simple modules 
\begin{gather*} V_0,V_2,V_3,V_4,V_5,V_6,V_7,V_8,V_9,V_{10},V_{11},V_{12},V_{13}
\end{gather*}
all appear in the decomposition of $V\otimes V$.  By Lemma \ref{DQ4decomps}, the 
simple modules
\begin{gather*} V_{14},V_{15},V_{16}, V_{18},V_{19},
\end{gather*}
all appear in the decomposition of $V_{4}\otimes V_{17}$, $V_{4}\otimes V_{20}$, 
$V_{11}\otimes V_{17}$, and $V_{3}\otimes V_{17}$, which appear in the 
decomposition of $V\otimes V \otimes V$. The remaining simple module
\begin{gather*} V_{1} = V_{2}\otimes V_{3}
\end{gather*}
appears in the decomposition of $V\otimes V\otimes V \otimes V$.
\end{proof}

Next let $\constb,\constc,\constd,\conste \in \Bbbk$ be arbitrary and nonzero.
In Table \ref{tensorSquareD4},
by identifying generators that play the role of $u$ and $v$ in
Table \ref{irrepsD4}, we
have identified an isomorphism between the two copies of a simple
$D(\mathfrak{D}_{4})$-module that appear in the tensor products of the
simple $D(\mathfrak{D}_{4})$-modules in the two orders.  So, using Table
\ref{tensorSquareD4}, the span of the list
of elements in Table \ref{relationsQ4} is a $D(\mathfrak{D}_4)$-submodule of $V\otimes V$, where
the $D(\mathfrak{D}_{4})$-module corresponding to the span of the
elements in a row of the display is listed to the left.  Here $u_1,u_2,u_3,u_4$ are nonzero constants, 
and $\alpha,\beta,\gamma \in \{1,-1\}$.
Further, in some cases the representation corresponding to the relation depends upon the value of the parameter; the notation $V_{i,j}$ indicates when the 
parameter in the relation is equal to $1$ the simple module corresponds to $V_i$, and when the parameter is $-1$, the relation corresponds 
to $V_j$. So, for example, $V_{0,2}$ means that $x_1x_2 - x_2x_1$ corresponds to the representation $V_0$ and $x_1x_2 + x_2 x_1$ corresponds to the representation $V_2$.

\begin{table}[H]
\caption{Relations for an algebra with action of $D(\mathfrak{D}_{4})$}
\begin{minipage}[t]{.45\textwidth}
\strut\vspace*{-\baselineskip}\newline
\scalebox{.75}{
\begin{tabular}{|c|c|} 
\hline
Rep & Relations \\ \hline
$V_{0,2}$ & $x_1x_2 - \alpha x_2x_1$ \\ \hline
$V_{3,0}$ & $y_1y_2 - \beta y_2y_1$ \\ \hline
$V_{0,3}$ & $z_1z_2 - \gamma z_2z_1$ \\ \hline
\multirow{2}{*}{$V_{11}$} & $(x_2y_1 - \i x_1y_2) - \constb(y_1x_1 + \i y_2x_2)$ \\
                       & $(\i x_2y_2 - x_1y_1) - \constb(-\i y_2x_1 - y_1x_2)$ \\ \hline
\multirow{2}{*}{$V_{13}$} & $(x_2y_1 + \i x_1y_2) - \constb(y_1x_1 - \i y_2x_2)$ \\
                       & $(\i x_2y_2 + x_1y_1) - \constb(-\i y_2x_1 + y_1x_2)$ \\ \hline
\end{tabular}
}
\end{minipage}
\begin{minipage}[t]{.45\textwidth}
\strut\vspace*{-\baselineskip}\newline
\scalebox{.75}{
\begin{tabular}{|c|c|} 
\hline
Rep & Relations \\ \hline
\multirow{2}{*}{$V_{10}$} & $(x_2z_1 + \i x_1z_2) - \constc(\i z_1x_1 + z_2x_2)$ \\
                       & $(\i x_2z_2 - x_1z_1) - \constc(z_2x_1 - \i z_1x_2)$ \\ \hline
\multirow{2}{*}{$V_{12}$} & $(x_2z_1 - \i x_1z_2) - \constc(\i z_1x_1 - z_2x_2)$ \\
                       & $(\i x_2z_2 + x_1z_1) - \constc(z_2x_1 + \i z_1x_2)$ \\ \hline
\multirow{2}{*}{$V_{4}$}  & $(y_1z_2 + \i y_2z_1) - \constd(z_2y_1 + \i z_1y_2)$ \\
                       & $(\i y_2z_1 - y_1z_2) - \constd(\i z_1y_2 - z_2y_1)$ \\ \hline
\multirow{2}{*}{$V_{9}$}  & $(y_2z_2 + \i y_1z_1) - \conste(z_2y_2 + \i z_1y_1)$ \\
                       & $(\i y_1z_1 - y_2z_2) - \conste(\i z_1y_1 - z_2y_2)$ \\ \hline
\end{tabular}
}
\end{minipage}
\label{relationsQ4}
\end{table}

We may choose a different generating set for this span, which no
longer makes it evident that it is a $D(\mathfrak{D}_4)$-module, but makes the
quotient ring easier to describe.  Indeed, by taking the sum and difference
of the relations from the pairs of representations $(V_{11},V_{13})$,
$(V_{10},V_{12})$ or across the same representation $V_4$ and $V_9$, (and dividing by constants),
we obtain the relations that appear in Theorem \ref{double-ore}.

To see a particular example, adding the first generators of the copies of $V_{13}$ and
$V_{11}$ in $V \otimes V$ gives us $2x_2y_1 - 2\constb y_1x_1$ and subtracting gives us
$2\i x_1y_2 + 2\constb \i y_2x_2$ which is equivalent to $x_1y_2 + \constb y_2x_2$.

\begin{remark}
We could have chosen also to use the other relations
appearing as summands of the representations $V_{17}^{\otimes 2}$,
$V_{20}^{\otimes 2}$ and $V_{21}^{\otimes 2}$.  Some of these seem to give relations of an
AS regular algebra, but we did not explore these cases in detail.
\end{remark}

\begin{theorem}\label{double-ore}
Let $V = V_{17} \oplus V_{20} \oplus V_{21}$ be the representation of
$D(\mathfrak{D}_4)$ given by the table on the left below:

\begin{minipage}[t]{.45\textwidth}
\strut\vspace*{-\baselineskip}\newline 
\begin{tabular}{|c|c|c|c|c|} \hline
$V_i$ & Gen. & Gr. & $r$ & $s$ \\ \hline\hline
{\multirow{2}{*}{$V_{17}$}}
& $x_1$ & $s$    & $x_2$ & $-\i x_1$ \\ 
& $x_2$ & $sr^2$ & $-x_1$ & $\i x_2$ \\ \hline
{\multirow{2}{*}{$V_{20}$}}
& $y_1$ & $sr$   & $-y_2$ & $y_2$ \\ 
& $y_2$ & $sr^3$ & $-y_1$ & $y_1$ \\ \hline
{\multirow{2}{*}{$V_{21}$}}
& $z_1$ & $sr$   & $-\i z_2$ & $z_2$ \\ 
& $z_2$ & $sr^3$ & $-\i z_1$ & $-z_1$ \\ \hline
\end{tabular}
\end{minipage}
\begin{minipage}[t]{.45\textwidth}
\strut\vspace*{-\baselineskip}\newline
\begin{tabular}{|c|c|} 
\hline
\multicolumn{2}{|c|}{Relations} \\ \hline \hline
$x_1x_2 - \alpha x_2x_1$     & $y_1y_2 - \beta y_2y_1$ \\ \hline
$z_1z_2 - \gamma z_2z_1$     & $x_1y_1 - \constb y_1x_2$ \\ \hline
$x_2y_1 - \constb y_1x_1$    & $y_1z_2 - \constd z_2y_1$ \\ \hline
$x_1y_2 + \constb y_2x_2$    & $x_2y_2 + \constb y_2x_1$ \\ \hline
$y_2z_2 - \conste z_2y_2$    & $x_1z_1 - \i \constc z_1x_2$ \\ \hline
$x_2z_1 - \i \constc z_1x_1$ & $y_1z_1 - \conste z_1y_1$ \\ \hline
$\i x_1z_2 - \constc z_2x_2$ & $\i x_2z_2 - \constc z_2x_1$ \\ \hline
$y_2z_1 - \constd z_1y_2$ & \\ \hline
\end{tabular}
\end{minipage}

Let $B$ be the quotient of $\Bbbk\langle x_1, x_2, y_1,y_2, z_1, z_2 \rangle$
by the relations in Table \ref{relationsQ4}, or equivalently, in the table
on the right above.  Then for any nonzero $\constb,\constc,\constd,\conste$ and
$\alpha, \beta, \gamma \in \{\pm 1\}$, the algebra $B$ is an
Artin-Schelter regular algebra of dimension six upon which $D(\mathfrak{D}_4)$
acts inner faithfully.

More precisely,
$B \cong \Bbbk_\mathbf{q}[y_1,y_2,z_1,z_2]_\alpha[x_1,x_2;\sigma]$, where
the skewing matrix $\mathbf{q}$ and the double Ore extension data
$\sigma$ are given by:

\begin{minipage}[t]{.3\textwidth}
\strut\vspace*{-\baselineskip}\newline 
\begin{equation*}
\mathbf{q} = \begin{pmatrix}
1 & \beta & \conste^{-1} & \constd^{-1} \\
\beta & 1 & \constd^{-1} & \conste^{-1} \\
\conste & \constd & 1 & \gamma \\
\constd & \conste & \gamma & 1
\end{pmatrix},
\end{equation*}
\end{minipage}
\quad\quad
\begin{minipage}[t]{.5\textwidth}
\strut\vspace*{-\baselineskip}\newline 
\scalebox{.9}{
\begin{tabular}{cc}
$\sigma(y_1) = \constb \begin{pmatrix} 0 & y_1 \\ y_1 & 0\end{pmatrix}$, & 
$\sigma(y_2) = \constb \begin{pmatrix} 0 & -y_2 \\ -y_2 & 0\end{pmatrix}$, \\
& \\
$\sigma(z_1) = \constc \begin{pmatrix} 0 & \i z_1 \\ \i z_1 & 0\end{pmatrix}$, & 
$\sigma(z_2) = \constc \begin{pmatrix} 0 & -\i z_2 \\ -\i z_2 & 0\end{pmatrix}$.
\end{tabular}}
\end{minipage}
\end{theorem}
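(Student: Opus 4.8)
The statement has two parts: the action of $D(\mathfrak{D}_4)$ on $B$ is inner faithful, and $B$ is AS regular of dimension six. For the first, the plan is to quote Proposition \ref{inner-faithful-n-2}, which shows $B_1 = V = V_{17}\oplus V_{20}\oplus V_{21}$ is an inner faithful $D(\mathfrak{D}_4)$-module; since the relations in Table \ref{relationsQ4} span a $D(\mathfrak{D}_4)$-submodule of $V\otimes V$, the action descends to the quotient $B$, and inner faithfulness of the action on $B$ follows from that of $B_1$. For AS regularity, the plan is to realize $B$ as a trimmed double Ore extension $A_\alpha[x_1,x_2;\sigma]$ of the skew polynomial ring $A=\Bbbk_{\mathbf q}[y_1,y_2,z_1,z_2]$ via the external construction of Notation \ref{doubleOreNotation}, and then invoke the Zhang--Zhang theorem \cite{ZZ} that a double Ore extension of an AS regular algebra is again AS regular, with global dimension two larger.

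First I would identify the base ring. One checks that $\mathbf q$ is multiplicatively skew-symmetric, which is immediate from $\beta^2=\gamma^2=1$ and $\constd\,\constd^{-1}=\conste\,\conste^{-1}=1$; hence $A$ is a genuine skew polynomial ring, so AS regular of dimension $4$. Comparing presentations, the six relations of $B$ involving only $y_1,y_2,z_1,z_2$ are exactly the skew-commutation relations of $\Bbbk_{\mathbf q}[y_1,y_2,z_1,z_2]$ (with the convention of Notation \ref{skewPolynomial}), which identifies $A$ with the subalgebra of $B$ that they generate.

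The core of the argument is verifying the external data $\{\sigma,p\}$ with $p=\alpha$. The key structural observation is that each $\sigma(r)$, for $r\in\{y_1,y_2,z_1,z_2\}$, is skew-diagonal of the form $\mu_r\left(\begin{smallmatrix}0&r\\ r&0\end{smallmatrix}\right)$, with $\mu_{y_1}=\constb$, $\mu_{y_2}=-\constb$, $\mu_{z_1}=\i\constc$, $\mu_{z_2}=-\i\constc$. Consequently $\sigma(a)\sigma(b)=\mu_a\mu_b\operatorname{diag}(ab,ab)$ for any two generators $a,b$, so $\sigma$ carries each defining relation $ab=q\,ba$ of $A$ (with $q\in\Bbbk^\times$) to $\mu_a\mu_b\operatorname{diag}(ab-q\,ba,\,ab-q\,ba)=0$ in $M_2(A)$. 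Thus $\sigma$ is a well-defined $\Bbbk$-algebra map $A\to M_2(A)$; this is the step that pins down the precise scalars in the table, and it is where I expect most of the bookkeeping to live, although the skew-diagonal form collapses each check to a single diagonal entry. The same observation gives invertibility: on $A_1$ one has $\sigma_{11}=\sigma_{22}=0$ and $\sigma_{12}=\sigma_{21}=D:=\operatorname{diag}(\constb,-\constb,\i\constc,-\i\constc)$, which is invertible since all scalars are nonzero, so the algebra map $\phi$ determined by the skew-diagonal matrices with entries $D^{-1}(r)$ is a two-sided inverse in the sense of Notation \ref{doubleOreNotation}.

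Finally I would verify the conditions \eqref{doubleOreConditions} and match the remaining relations. Because each $\sigma(r)$ is skew-diagonal and $p=\alpha$ satisfies $p^2=1$, Remark \ref{diagDOData} reduces \eqref{doubleOreConditions} to the commutativity of $\sigma_{12}$ and $\sigma_{21}$, which holds trivially since both equal $D$. The relation $x_1x_2=\alpha x_2x_1$ together with the eight relations obtained by equating entries of $\begin{pmatrix}x_1\\ x_2\end{pmatrix}r=\sigma(r)\begin{pmatrix}x_1\\ x_2\end{pmatrix}$ (for example $r=y_1$ gives $x_1y_1=\constb y_1x_2$ and $x_2y_1=\constb y_1x_1$) reproduce exactly the eight mixed $x$--$y$ and $x$--$z$ relations of $B$. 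By \cite[Proposition 1.11]{ZZ} the data $\{\sigma,\alpha\}$ then defines the double Ore extension $A_\alpha[x_1,x_2;\sigma]$, and comparing presentations yields $B\cong A_\alpha[x_1,x_2;\sigma]$. Since $A$ is AS regular of dimension $4$, the algebra $B$ is AS regular of dimension $6$, completing the proof. No step presents a genuine conceptual obstacle; the content is organizational, and the skew-diagonal form of $\sigma$ is what makes every matrix computation degenerate to its diagonal entries.
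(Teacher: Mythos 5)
Your proposal is correct and follows essentially the same route as the paper: identify the subalgebra $A=\Bbbk_{\mathbf q}[y_1,y_2,z_1,z_2]$, realize $B$ as the trimmed double Ore extension $A_\alpha[x_1,x_2;\sigma]$ via the external data of Notation \ref{doubleOreNotation}, use the skew-diagonal form of $\sigma$ together with Remark \ref{diagDOData} to verify the conditions \eqref{doubleOreConditions}, and conclude AS regularity from \cite{ZZ}, with inner faithfulness coming from Proposition \ref{inner-faithful-n-2}. The only difference is one of detail: you spell out the algebra-map and invertibility checks that the paper dismisses as ``may be verified directly.''
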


\begin{proof}
Let $A$ be the subalgebra generated by the images of $y_1,y_2,z_1,z_2$ in $B$.
Checking that the relations in the table above describe the relations of $A$,
as well as give rise to the double Ore extension $B_\alpha[x_1,x_2;\sigma]$,
may be verified directly.  

Note that each of these matrices is skew diagonal, so by Remark \ref{diagDOData},
one needs only check that the matrices corresponding to $\sigma_{12}$ and $\sigma_{21}$
commute.  As one can see, these matrices are diagonal and therefore satisfy the 
requirements of the remark.  In particular, these assignments define an algebra 
homomorphism $\sigma : A \to M_2(A)$, so that the data
$\{\sigma,\alpha\}$ give a well-defined double Ore extension.
\end{proof}

In addition to the description given in Theorem \ref{double-ore},
$B$ is a derivation quotient algebra defined
by a twisted superpotential $\mathbf{w}_B$ according to \cite{DV}.
We recall now how one may compute the twisted superpotential and the corresponding
Nakayama automorphism of $B$ using the Frobenius structure of $B^!$.
Indeed, suppose $B = T(V)/\langle I \rangle$ is a Koszul AS-regular algebra of dimension $n$ 
(presented with $I \subseteq V \otimes V$) so that $B^! = T(V^*)/\langle I^\perp \rangle$.  
Given a basis $x_1,\dots,x_n$ of $V$, let $x_1^*,\dots,x_n^*$ be the corresponding dual basis
of $V^*$.  Further, we identify both $x_j$ and $x_j^*$ with their images in $B$ and $B^!$, 
respectively.  

Since $B^!$ is Frobenius, there is a nondegenerate $n$-form on $B_1^!$ given by the multiplication map
$$\Phi : B_1^! \otimes \cdots \otimes B_1^! \to B_n^! \cong \Bbbk$$
where the last isomorphism is made explicit via choosing a basis $\{\vartheta\}$ of $B_n^!$.
Then $B$ is the derivation-quotient algebra corresponding to the twisted superpotential given by
$$\omega = \sum_{1 \leq j_1,\dots,j_n \leq n} \Phi(x_{j_1}^*,\dots,x_{j_n}^*) x_{j_1}\cdots x_{j_n}.$$
That is, the coefficient of $x_{j_1}\cdots x_{j_n}$ in the superpotential $\omega$ defining $B$
may be taken to be the scalar $c_{j_1,\dots,j_n}$ such that
$x_{j_1}^*\cdots x_{j_n}^* = c_{j_1,\dots,j_n}\vartheta$, where $\vartheta$ was the choice of
basis of $B_n^!$ made above.

Returning to our case, a Gr\"obner basis calculation shows that $x_1^*x_2^*y_1^*y_2^*z_1^*z_2^*$ is
nonzero in $B_6^!$, and a calculation shows that the superpotential of $B$ has the form:
$$\mathbf{w}_B =  x_1x_2y_1y_2z_1z_2 + \beta\gamma x_1x_2y_2y_1z_2z_1 - \alpha\beta\gamma x_2x_1y_2y_1z_2z_1+\cdots$$
where we suppress the remaining terms for brevity.  We note that all
nonzero terms appearing in the superpotential may be shown to have identity 
group grade.
One may also check that the Nakayama automorphism $\mu$ associated to the superpotential
$\mathbf{w}_B$ is given by
$$\mu = \begin{pmatrix}
-\alpha\constb^{-2}\constc^{-2} I & 0 & 0 \\
 0 & -\alpha\beta\constb^2\constd^{-1}\conste^{-1} I & 0 \\
 0 & 0 & \alpha\gamma\constc^2\constd\conste I\end{pmatrix}$$
where $I$ is the $2\times 2$ identity matrix.  

The reason for us to introduce the superpotential here is so that we may appeal to results of 
\cite[Theorem 3.2]{C1}, which allow us use the superpotential to compute the homological determinant
$\hdet_B: D(\mathfrak{D}_4)\rightarrow \Bbbk$ of the action of $D(\mathfrak{D}_4)$ on $B$.  Indeed, the 
action of $D(\mathfrak{D}_4)$ on $V$ induces an action on $V^{\otimes 6}$ in such a way that the span of
$\mathbf{w}_B$ is a one-dimensional representation $W$ of $D(\mathfrak{D}_4)$ whose character is precisely
the homological determinant of the action.  Furthermore, our remark on the
group grade of the superpotential also shows that the group grade portion of
the homological determinant of this action is trivial

Hence in order to complete our computation of the homological determinant of the 
action, it suffices to determine the action of $r$ and $s$ on $\mathbf{w}_B$.
Recall that the actions of $r$ and $s$ on $V$ are given by the matrices
\begin{equation}\label{r-s-action}
\begin{pmatrix}
0 & 1 & 0 & 0 & 0 & 0 \\
-1 & 0 & 0 & 0 & 0 & 0 \\
0 & 0 & 0 & -1 & 0 & 0 \\
0 & 0 & -1 & 0 & 0 & 0 \\
0 & 0 & 0 & 0 & 0 & -\i \\
0 & 0 & 0 & 0 & -\i & 0 \end{pmatrix}
\text{ and }
\begin{pmatrix}
-\i & 0 & 0 & 0 & 0 & 0 \\
0 & \i & 0 & 0 & 0 & 0 \\
0 & 0 & 0 & 1 & 0 & 0 \\
0 & 0 & 1 & 0 & 0 & 0 \\
0 & 0 & 0 & 0 & 0 & 1 \\
0 & 0 & 0 & 0 & -1 & 0 \end{pmatrix},
\end{equation}
respectively.  The actions of $r$ and $s$ on $\mathbf{w}_B$ are given by
\begin{eqnarray*}
\mathbf{w}_B\cdot r &=& (x_2)(-x_1)(-y_2)(-y_1)(-\i z_2)(-\i z_1) \\
                    & &  + \beta\gamma(x_2)(-x_1)(-y_1)(-y_2)(-\i z_2)(-\i z_1) \\
                    & &  - \alpha\beta\gamma(x_1)(-x_2)(-y_1)(-y_2)(-\i z_1)(-\i z_2)+\cdots \\
&=&x_2x_1y_2y_1z_2z_1 + \beta\gamma x_2x_1y_1y_2 z_2z_1 - \alpha\beta\gamma x_1x_2y_1y_2 z_1 z_2+\cdots \text{ and }\\
\mathbf{w}_B\cdot s &=& -\i x_1(\i x_2)(y_2)(-y_1)( z_2)(-z_1) \\
                    & & +\beta\gamma(-\i x_1)(\i x_2)(y_1)(y_2)( -z_1)( z_2)\\
                    & & - \alpha\beta\gamma(\i x_2)(-\i x_1)(y_1)(y_2)( -z_1)( z_2)+\cdots \\
&=&x_1x_2y_2y_1z_2z_1 - \beta\gamma x_1x_2y_1y_2 z_1z_2 + \alpha\beta\gamma x_2x_1y_1y_2 z_1 z_2+\cdots.
\end{eqnarray*}
Note that since the above matrices are monomial, the only contribution to the
$x_1x_2y_1y_2z_1z_2$ term appearing in $\mathbf{w}_B\cdot r$ (respectively $\mathbf{w}_B \cdot s$)
arises from the image of the third (respectively, second) term under $r$ (respectively, $s$).
We have therefore proven the following result.

\begin{proposition} \label{prop:hdetQ4}
Let $D(\mathfrak{D}_4)$ act on the algebra $B$ as in the statement of Theorem \ref{double-ore}.
Then the group grade component of the homological determinant of this action is trivial, and one has that
$$\hdet_B(r) = - \alpha\beta\gamma \qquad\text{and}\qquad  \hdet_B(s) = - \beta\gamma.$$
In particular, the action has a trivial homological determinant if and only if
$\alpha=1$ and $\beta = -\gamma$.
\end{proposition}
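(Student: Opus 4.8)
The plan is to exploit the description of $B$ as a derivation-quotient algebra with twisted superpotential $\mathbf{w}_B$, together with Crawford's identification of the homological determinant with the character of the one-dimensional representation $W = \Span_\Bbbk(\mathbf{w}_B) \subseteq V^{\otimes 6}$. By Theorem \ref{double-ore}, $B$ is a quadratic AS-regular algebra of global dimension six; being a double Ore extension of a skew-polynomial ring it is Koszul, so the superpotential machinery recalled above applies. The first step is therefore to compute $\mathbf{w}_B$ explicitly from the Frobenius pairing on $B^!$, which amounts to a Gröbner basis calculation verifying that $x_1^*x_2^*y_1^*y_2^*z_1^*z_2^*$ is nonzero in $B_6^!$ and then reading off the coefficients $c_{j_1,\dots,j_6}$ via the multiplication form $\Phi$.

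Once $\mathbf{w}_B$ is in hand, by \cite[Theorem 3.2]{C1} the homological determinant $\hdet_B \colon D(\mathfrak{D}_4) \to \Bbbk$ is the character through which $D(\mathfrak{D}_4)$ acts on the one-dimensional module $W$; thus $\mathbf{w}_B \cdot h = \hdet_B(h)\,\mathbf{w}_B$ for each $h$, and the scalar can be extracted by comparing the coefficient of a single fixed monomial on both sides. I would organize the computation according to the three types of algebra generators of $D(\mathfrak{D}_4)$: the grading idempotents $\phi_g$, the element $r$, and the element $s$. For the $\phi_g$-part, the key observation is that every nonzero monomial appearing in $\mathbf{w}_B$ has identity group grade; hence each $\phi_g$ acts on $W$ as the identity, so the group-grade component of $\hdet_B$ is trivial and the problem reduces to determining the action of $r$ and $s$.

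The remaining step is to apply the monomial matrices \eqref{r-s-action} describing the action of $r$ and $s$ on the basis $x_1,x_2,y_1,y_2,z_1,z_2$ of $V$. Because these matrices are monomial, each degree-six monomial maps to a scalar multiple of a single monomial, so exactly one term of $\mathbf{w}_B$ contributes to the coefficient of a chosen reference monomial, say $x_1x_2y_1y_2z_1z_2$, in $\mathbf{w}_B \cdot r$ and in $\mathbf{w}_B \cdot s$. Tracking the sign and power-of-$\i$ contribution of each of the six factors then yields $\hdet_B(r) = -\alpha\beta\gamma$ and $\hdet_B(s) = -\beta\gamma$. For the final characterization, triviality of $\hdet_B$ forces $\hdet_B(r) = \hdet_B(s) = 1$; solving $-\beta\gamma = 1$ gives $\beta = -\gamma$ (as $\beta,\gamma \in \{\pm 1\}$), and substituting into $-\alpha\beta\gamma = 1$ gives $\alpha = 1$, while conversely these two conditions make both scalars equal to $1$.

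The main obstacle is the explicit determination of $\mathbf{w}_B$: producing enough of the coefficients $c_{j_1,\dots,j_6}$ from the Frobenius structure of $B^!$ to pin down the scalar, and in particular verifying that every surviving monomial carries identity group grade. Once this is achieved, the extraction of $\hdet_B(r)$ and $\hdet_B(s)$ is a short bookkeeping computation, made routine by the monomial nature of the matrices in \eqref{r-s-action}.
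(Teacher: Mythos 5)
Your proposal is correct and follows essentially the same route as the paper: both identify $\hdet_B$ with the character of the one-dimensional module spanned by the twisted superpotential $\mathbf{w}_B$ via \cite[Theorem 3.2]{C1}, observe that every term of $\mathbf{w}_B$ has identity group grade (so the grading part of the determinant is trivial), and then use the fact that the matrices in \eqref{r-s-action} are monomial to read off $\hdet_B(r)$ and $\hdet_B(s)$ from the single term contributing to the coefficient of $x_1x_2y_1y_2z_1z_2$. The only addition you make is the explicit remark that Koszulity is needed for the superpotential machinery, which the paper leaves implicit.
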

\end{section}

\begin{section}{Invariants for \texorpdfstring{$D(\mathfrak{D}_4)$}{D(D4)} acting on \texorpdfstring{$B$}{B} with trivial homological determinant}
\label{invariants}
We now begin our computation of the invariant ring corresponding to a family of 
algebras arising from this example.


\begin{proposition}\label{invariant-equation}
Let $D(\mathfrak{D}_4)$ act on the algebra $B$ as in the statement of Theorem \ref{double-ore}.
A basis of $B$ is given by $\{\mathbf{x}^{\mathbf{a}}= x_1^{a_1}x_2^{a_2}y_1^{b_1}y_2^{b_2}z_1^{c_1}z_2^{c_2}\}$ where $a_1,a_2,b_1,b_2,c_1,c_2$ are nonnegative integers. If a monomial $\mathbf{x}^{\mathbf{a}}\in B$ is a term in an element of the invariant subring $B^{D(\mathfrak{D}_4)}$, then $a_1\pm a_2, b_1\pm b_2, c_1\pm c_2$ are even and $(a_1-a_2)\equiv b_1-b_2+c_2-c_2\mod 4$.
\end{proposition}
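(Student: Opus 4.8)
The plan is to reduce to group invariants via Lemma \ref{identity-component} and then read off the two families of congruences from two separate inputs: the requirement that an invariant lie in the identity component $B_e$, and the action of the central element $r^2$. Concretely, Lemma \ref{identity-component} gives $B^{D(\mathfrak{D}_4)} = (B_e)^{\mathfrak{D}_4}$, so an invariant $f$ lies in $B_e$ and satisfies $f\cdot r = f\cdot s = f$. Since the monomials $\mathbf{x}^{\mathbf{a}}$ form a homogeneous basis for the $\mathfrak{D}_4$-grading (with generator grades as in the table of Theorem \ref{double-ore}), every monomial occurring in $f\in B_e$ must itself have grade $e$. So the first task is to compute the $\mathfrak{D}_4$-grade of $\mathbf{x}^{\mathbf{a}}$ and decide when it is trivial.

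To make the nonabelian grade computation transparent, I would identify $\mathfrak{D}_4$ with the quaternion group $\{\pm 1,\pm\mathbf{i},\pm\mathbf{j},\pm\mathbf{k}\}$ (here $\mathbf{i},\mathbf{j},\mathbf{k}$ are the quaternion units, not the root of unity $\i$) via $r\mapsto\mathbf{i}$, $s\mapsto\mathbf{j}$; one checks $r^4=e$, $s^2=r^2$, $srs^{-1}=r^{-1}$. The generator grades then read $x_1\mapsto\mathbf{j}$, $x_2\mapsto-\mathbf{j}$, $y_1,z_1\mapsto-\mathbf{k}$, $y_2,z_2\mapsto\mathbf{k}$. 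Because $-1=r^2$ is central, the grade of $\mathbf{x}^{\mathbf{a}}$ factors cleanly as
\[
(-1)^{a_2+b_1+c_1}\,\mathbf{j}^{\,a_1+a_2}\,\mathbf{k}^{\,b_1+b_2+c_1+c_2}.
\]
This equals $1$ precisely when $a_1+a_2$ and $b_1+b_2+c_1+c_2$ are both even (so the quaternion part $\mathbf{j}^{a_1+a_2}\mathbf{k}^{b_1+b_2+c_1+c_2}$ is real) and the remaining sign $(-1)^{a_2+b_1+c_1+(a_1+a_2)/2+(b_1+b_2+c_1+c_2)/2}$ equals $1$; I will call this last parity condition $(\star)$.

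Next, $f\cdot r=f$ forces $f\cdot r^2=f$, and since $r$ (hence $r^2$) acts as an algebra automorphism with $x_i\cdot r^2=-x_i$, $z_i\cdot r^2=-z_i$, and $y_i\cdot r^2=y_i$, it scales $\mathbf{x}^{\mathbf{a}}$ by $(-1)^{a_1+a_2+c_1+c_2}$. As the monomials are eigenvectors, every monomial of $f$ must have eigenvalue $1$, i.e.\ $a_1+a_2+c_1+c_2$ is even; together with $a_1+a_2$ even (from the grade) this gives $c_1+c_2$ even, and hence $b_1+b_2$ even. This is the first assertion. For the second, I would observe that $(\star)$ is exactly the claimed congruence in disguise: since $a_1+a_2$ and $b_1+b_2+c_1+c_2$ are even, the quantity $S:=a_2+b_1+c_1+\tfrac{a_1+a_2}{2}+\tfrac{b_1+b_2+c_1+c_2}{2}$ is an integer, and a direct reduction gives $2S\equiv (a_1-a_2)-(b_1-b_2)-(c_1-c_2)\pmod 4$. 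Hence $(\star)$, namely $S$ even, is equivalent to $(a_1-a_2)\equiv (b_1-b_2)+(c_1-c_2)\pmod 4$.

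The main obstacle is the grade computation itself: the grading group is nonabelian, so one cannot simply add exponents, and a naive passage to the abelianization only recovers the mod-$2$ information. The quaternion model resolves this by isolating the central sign as a single parity, and the mildly surprising payoff is that this leftover sign condition $(\star)$ is precisely the mod-$4$ congruence. In particular no further use of the $s$-action is needed, which is fortunate since $s$ does not act diagonally on the $y$- and $z$-variables and would not yield the condition by a bare eigenvalue argument.
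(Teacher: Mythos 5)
Your proof is correct and follows essentially the same route as the paper's: reduce to $(B_e)^{\mathfrak{D}_4}$ via Lemma \ref{identity-component}, compute the $\mathfrak{D}_4$-grade of a basis monomial to obtain the evenness of $a_1\pm a_2$ and of $b_1-b_2+c_1-c_2$ together with the mod-$4$ congruence, and then use the eigenvalue $(-1)^{a_1+a_2+c_1+c_2}$ of the central element $r^2=s^2$ to extract the parity of $c_1\pm c_2$ (hence of $b_1\pm b_2$). Your quaternion model of $\mathfrak{D}_4$ is simply a repackaging of the paper's direct computation of the grade as $s^{a_1-a_2}(sr)^{b_1-b_2+c_1-c_2}$ together with the observation $\langle s\rangle\cap\langle sr\rangle=\langle s^2\rangle$; the condition $(\star)$ you isolate reduces to exactly the same congruence.
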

\begin{proof}
From Lemma \ref{identity-component}, we are interested first in the identity component $B_e$ and then
the restricted action of $\mathfrak{D}_4$ on $B_e$.  That a basis of $B$ is given by monomials of the form
$x_1^{a_1}x_2^{a_2}y_1^{b_1}y_2^{b_2}z_1^{c_1}z_2^{c_2}$ is clear from the definition of the skew
polynomial ring and double Ore extensions.  Such an element has group grade
\begin{eqnarray*}
s^{a_1}(s^{-1})^{a_2}(sr)^{b_1}((sr)^{-1})^{b_2}(sr)^{c_1}((sr)^{-1})^{c_2} & = & 
s^{a_1 - a_2}(sr)^{b_1 - b_2 + c_1 - c_2}.
\end{eqnarray*}
In order for this element to have grade equal to $e$, we must have that
$a_1 - a_2 \equiv b_1 + c_1 - b_2 - c_2 \mod 4$, and both sides must be even.  It follows that
$a_1 \pm a_2$ is even as well since $\langle s \rangle \cap \langle sr \rangle = \langle s^2 \rangle$.
Note also that, if a monomial of $B$ is part of an invariant under the subgroup $\langle s^2 \rangle$ of $G$, we must also have that $c_1 \pm c_2$ is even.  It follows that a monomial involved in an invariant of $D(\mathfrak{D}_4)$
must have $b_1 \pm b_2$ is even.
\end{proof}

\begin{definition}
Let $\mathbf{a}=(a_1,a_2,b_1,b_2,c_1,c_2)\in \mathbb{N}^6$ and let $\mathbf{x}^{\mathbf{a}} =  x_1^{a_1}x_2^{a_2}y_1^{b_1}y_2^{b_2}z_1^{c_1}z_2^{c_2}$ be a monomial in $B$. There is an action of elements of $\mathfrak{D}_4$ on exponent vectors $\mathbf{a}=(a_1,a_2,b_1,b_2,c_1,c_2)$ given by
\begin{align*}
\mathbf{a}\cdot r &=(a_1,a_2,b_1,b_2,c_1,c_2)\cdot r = (a_2,a_1,b_2,b_1,c_2,c_1),\\
\mathbf{a}\cdot s &=(a_1,a_2,b_1,b_2,c_1,c_2)\cdot s = (a_1,a_2,b_2,b_1,c_2,c_1).
\end{align*}
This leads to a general action of the elements $g\in\mathfrak{D}_4$ on monomials given by 
$$\mathbf{x}^{\mathbf{a}}\cdot g = \lambda(\mathbf{a}, g)\mathbf{x}^{\mathbf{a}\cdot g}$$
where $\lambda(\mathbf{a}, g)$ are scalars. Furthermore, $\mathbf{x}^{\mathbf{a}}\cdot gh = (\lambda(\mathbf{a}, g)\mathbf{x}^{\mathbf{a}\cdot g})\cdot h = \lambda(\mathbf{a}, g)\lambda(\mathbf{a}\cdot g,h)\mathbf{x}^{\mathbf{a}\cdot gh}$, and $\mathbf{x}^{\mathbf{a}}\cdot gh = \lambda(\mathbf{a}, gh)\mathbf{x}^{\mathbf{a}\cdot gh}$ so the scalars satisfy
$$\lambda(\mathbf{a}, gh) = \lambda(\mathbf{a}, g)\lambda(\mathbf{a}\cdot g, h).$$
\end{definition} 
If we assume that the monomial $\mathbf{x}^{\mathbf{a}}$ is a term of an invariant, we can determine the scalars $\lambda(\mathbf{a}, g)$ for when $g$ is equal to $r$, $s$ or $rs$ using Proposition \ref{invariant-equation} in the following way. 
\begin{align*}
\mathbf{x}^{\mathbf{a}}\cdot r &= (x_1^{a_1}x_2^{a_2}y_1^{b_1}y_2^{b_2}z_1^{c_1}z_2^{c_2})\cdot r = (-1)^{a_1+b_1+b_2}(-\i)^{c_1+c_2}x_2^{a_1}x_1^{a_2}y_2^{b_1}y_1^{b_2}z_2^{c_1}z_1^{c_2} \\
&= (-1)^{a_1+b_1+b_2+c_1+c_2}(\i)^{c_1+c_2}\alpha^{a_1a_2}\beta^{b_1b_2}\gamma^{c_1c_2}x_1^{a_2}x_2^{a_1}y_1^{b_2}y_2^{b_1}z_1^{c_2}z_2^{c_1} \\
&= ((-1)^{a_1}(\i)^{c_1+c_2}\alpha^{a_1}\beta^{b_1}\gamma^{c_1})x_1^{a_2}x_2^{a_1}y_1^{b_2}y_2^{b_1}z_1^{c_2}z_2^{c_1} =\lambda(\mathbf{a}, r) \mathbf{x}^{\mathbf{a}\cdot r},\\
&\text{ hence, } \lambda(\mathbf{a}, r) = (-1)^{a_1}\i^{c_1+c_2}\alpha^{a_1}\beta^{b_1}\gamma^{c_1}.
\end{align*}
\begin{align*}
\mathbf{x}^{\mathbf{a}}\cdot s &= (x_1^{a_1}x_2^{a_2}y_1^{b_1}y_2^{b_2}z_1^{c_1}z_2^{c_2})\cdot s = (-1)^{c_1}(-\i)^{a_1}(\i)^{a_2}x_1^{a_1}x_2^{a_2}y_2^{b_1}y_1^{b_2}z_2^{c_1}z_1^{c_2} \\
&= (-1)^{a_1+c_1}(\i)^{a_1+a_2}\beta^{b_1}\gamma^{c_1}x_1^{a_1}x_2^{a_2}y_1^{b_2}y_2^{b_1}z_1^{c_2}z_2^{c_1} =\lambda(\mathbf{a}, s) \mathbf{x}^{\mathbf{a}\cdot s},\\
&\text{ hence, } \lambda(\mathbf{a}, s) = (-1)^{a_1+c_1}\i^{a_1+a_2}\beta^{b_1}\gamma^{c_1}, \text{ and }
\end{align*}
\begin{align*}
\mathbf{x}^{\mathbf{a}}\cdot rs  &= ((-1)^{a_1}(\i)^{c_1+c_2}\alpha^{a_1}\beta^{b_1}\gamma^{c_1})(x_2^{a_1}x_1^{a_2}y_2^{b_1}y_1^{b_2}z_2^{c_1}z_1^{c_2})\cdot s \\
&= (-1)^{a_1+a_2+c_2}(\i)^{c_1+c_2+a_1+a_2} \alpha^{a_1}\beta^{b_1+b_2}\gamma^{c_1+c_2}    x_1^{a_2}x_2^{a_1}y_1^{b_1}y_2^{b_2}z_1^{c_1}z_2^{c_2} \\
&= (-1)^{c_1}(\i)^{c_1+c_2+a_1+a_2} \alpha^{a_1} x_1^{a_2}x_2^{a_1}y_1^{b_1}y_2^{b_2}z_1^{c_1}z_2^{c_2} =\lambda(\mathbf{a}, rs) \mathbf{x}^{\mathbf{a}\cdot rs},\\
&\text{ hence, } \lambda(\mathbf{a}, rs) = (-1)^{c_1}\i^{c_1+c_2+a_1+a_2} \alpha^{a_1}.
\end{align*}

By Proposition \ref{invariant-equation}, we may order elements in the basis
of $B$ in graded lexicographic order with $x_1 > x_2 > y_1 > y_2 > z_1 > z_2$.
Based on the action of $\mathfrak{D}_4$ on
the exponent vectors given above, it is natural to restrict our attention
to those exponent vectors which are lead monomials in their orbit under this action.  
This is achieved for those exponent vectors in the following set:

\begin{equation} \label{defX}
X = \left\{(a_1,a_2,b_1,b_2,c_1,c_2) \in \mathbb{N}^6~\left|~~ \begin{matrix}a_1 \geq a_2,
b_1 \geq b_2, \\ a_1\pm a_2,b_1\pm b_2, c_1\pm c_2~\text{are even, and} \\ a_1-a_2\equiv b_1-b_2+c_1-c_2\mod 4\end{matrix}\right.\right\}.
\end{equation}
In particular, we may assume that $a_1 \geq a_2$ since $rs$ interchanges only these
two values, and we can assume that $b_1 \geq b_2$ since $s$ interchanges both
$b_1$ and $b_2$ as well as $c_1$ and $c_2$ while keeping $a_1$ and $a_2$ fixed.
That is, we cannot assume anything about the relative order of $c_1$ and
$c_2$ since we cannot independently flip these components of the exponent vector.

\begin{definition} \label{def:fa}
For a fixed exponent vector
$\mathbf{a}=(a_1,a_2,b_1,b_2,c_1,c_2) \in X$ defined as
in \eqref{defX}, we let $f_\mathbf{a}$ denote the
following invariant:
$$f_{\mathbf{a}} =  \mathbf{x}^{\mathbf{a}} + \lambda(\mathbf{a}, r) \mathbf{x}^{\mathbf{a}\cdot r}+\lambda(\mathbf{a}, s) \mathbf{x}^{\mathbf{a}\cdot s}+\lambda(\mathbf{a}, rs) \mathbf{x}^{\mathbf{a}\cdot rs}.$$
\end{definition}
To see that $f_\mathbf{a}$ is an invariant, note that
\begin{eqnarray*}
f_{\mathbf{a}}\cdot r &=&  \lambda(\mathbf{a}, r)\mathbf{x}^{\mathbf{a}\cdot r} + \lambda(\mathbf{a}, r)\lambda(\mathbf{a}\cdot r, r) \mathbf{x}^{\mathbf{a}\cdot r^2} + \\
                      & &  \lambda(\mathbf{a}, s)\lambda(\mathbf{a}\cdot s, r) \mathbf{x}^{\mathbf{a}\cdot sr} + \lambda(\mathbf{a}, rs)\lambda(\mathbf{a}\cdot rs,r)\mathbf{x}^{\mathbf{a}\cdot rsr} \\
                      &=&  \lambda(\mathbf{a}, r)\mathbf{x}^{\mathbf{a}\cdot r} + \lambda(\mathbf{a}, r)\lambda(\mathbf{a}\cdot r, r) \mathbf{x}^{\mathbf{a}} + \\
                      & &  \lambda(\mathbf{a}, s)\lambda(\mathbf{a}\cdot s, r) \mathbf{x}^{\mathbf{a}\cdot rs}+\lambda(\mathbf{a}, rs)\lambda(\mathbf{a}\cdot rs,r) \mathbf{x}^{\mathbf{a}\cdot s}\\
                      &=&  \lambda(\mathbf{a}, r^2)\mathbf{x}^{\mathbf{a}} + \lambda(\mathbf{a}, r) \mathbf{x}^{\mathbf{a}\cdot r} + 
                           \lambda(\mathbf{a}, rsr) \mathbf{x}^{\mathbf{a}\cdot s}+\lambda(\mathbf{a}, sr)\mathbf{x}^{\mathbf{a}\cdot rs}\\
                      &=&  \mathbf{x}^{\mathbf{a}} + \lambda(\mathbf{a}, r) \mathbf{x}^{\mathbf{a}\cdot r} +
                           \lambda(\mathbf{a}, s) \mathbf{x}^{\mathbf{a}\cdot s}+\lambda(\mathbf{a}, rs) \mathbf{x}^{\mathbf{a}\cdot rs} = f_{\mathbf{a}}.
\end{eqnarray*}
The last equality was obtained from the fact that $\lambda(\mathbf{a}, r^2)=(-1)^{a_1+a_2+c_1+c_2}=1$ and since the action of $rsr$ and $s$ are the same on $\mathbf{x}^{\mathbf{a}}$ so $\lambda(\mathbf{a}, rsr)=\lambda(\mathbf{a}, s)$. Similarly the action of $rs$ and $sr$ are the same on $\mathbf{x}^{\mathbf{a}}$ so $\lambda(\mathbf{a}, rs)=\lambda(\mathbf{a}, sr)$. 

Since the set of exponent vectors $\{\mathbf{a},\mathbf{a}\cdot r, \mathbf{a}\cdot s,\mathbf{a}\cdot rs\}$ is closed under the action of $\mathfrak{D}_4$, every invariant $f\in B$ can be written 
uniquely as
\begin{equation}\label{invariant}
f = \sum_{\mathbf{a}\in X} c_{\mathbf{a}}f_{\mathbf{a}} 
\end{equation}
where $c_{\mathbf{a}}$ are scalars and the set $X$ is defined
in \eqref{defX}.

\begin{lemma}
Let $\mathbf{a},\mathbf{a}'\in X$ such that $\mathbf{x}^{\mathbf{a}},\mathbf{x}^{\mathbf{a}'}\in B$ satisfy the conditions of Proposition \ref{invariant-equation}. Then $\mathbf{a}+\mathbf{a}'\in X$ and $\mathbf{x}^{\mathbf{a}}\mathbf{x}^{\mathbf{a}'} = \lambda \mathbf{x}^{\mathbf{a}+\mathbf{a}'}$ for some scalar $\lambda$.
\end{lemma}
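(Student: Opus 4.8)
The plan is to treat the two assertions separately. For the claim that $\mathbf{a} + \mathbf{a}' \in X$, I would simply observe that each defining condition of $X$ in \eqref{defX} is stable under coordinate-wise addition. The inequalities $a_1 \geq a_2$ and $b_1 \geq b_2$ add; each parity condition ``$a_1 \pm a_2$, $b_1 \pm b_2$, $c_1 \pm c_2$ even'' is preserved because a sum of even integers is even; and the congruence $a_1 - a_2 \equiv b_1 - b_2 + c_1 - c_2 \pmod 4$ for $\mathbf{a}$ together with the corresponding one for $\mathbf{a}'$ add to give the congruence for $\mathbf{a} + \mathbf{a}'$. This part is routine.

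The substance is in the second assertion, that the product $\mathbf{x}^{\mathbf{a}}\mathbf{x}^{\mathbf{a}'}$ is a scalar multiple of the single basis monomial $\mathbf{x}^{\mathbf{a} + \mathbf{a}'}$. The starting point is that every defining relation of $B$ listed in Theorem \ref{double-ore} is a \emph{binomial} relation $m_1 = c\,m_2$ with $m_1, m_2$ degree-two monomials and $c$ a nonzero scalar; consequently, rewriting any word in the generators into the normal form $x_1^{*}x_2^{*}y_1^{*}y_2^{*}z_1^{*}z_2^{*}$ produces a scalar times a single monomial, never a sum. I would reorder $\mathbf{x}^{\mathbf{a}}\mathbf{x}^{\mathbf{a}'}$ by first moving the block $x_1^{a_1'}x_2^{a_2'}$ (the $x$-part of $\mathbf{x}^{\mathbf{a}'}$) leftward past $y_1^{b_1}y_2^{b_2}z_1^{c_1}z_2^{c_2}$ (the $y,z$-part of $\mathbf{x}^{\mathbf{a}}$), then moving $y_1^{b_1'}y_2^{b_2'}$ past $z_1^{c_1}z_2^{c_2}$, and finally collecting the three homogeneous blocks using $x_1x_2 = \alpha x_2x_1$, $y_1y_2 = \beta y_2y_1$, and $z_1z_2 = \gamma z_2z_1$.

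The key subtlety, and the only place a hypothesis is really used, is the first of these moves. The relations $x_i y_j = \pm \constb\, y_j x_{3-i}$ and $x_i z_j = \pm \i \constc\, z_j x_{3-i}$ \emph{swap} the index of the $x$ being commuted past a $y$ or a $z$. Thus each generator of the block $x_1^{a_1'}x_2^{a_2'}$, as it passes the $N := b_1 + b_2 + c_1 + c_2$ generators making up the $y,z$-part of $\mathbf{x}^{\mathbf{a}}$, has its index flipped exactly $N$ times. Because $\mathbf{a} \in X$ satisfies the conditions of Proposition \ref{invariant-equation}, both $b_1 + b_2$ and $c_1 + c_2$ are even, so $N$ is even and every $x$ returns to its original index; hence the block emerges unchanged as $x_1^{a_1'}x_2^{a_2'}$, up to a nonzero scalar, rather than as a monomial with a different $x_1/x_2$ split. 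The analogous move of the $y$'s past the $z$'s causes no index change at all, since the relations $y_i z_j = (\text{scalar})\, z_j y_i$ are index-preserving, so no parity is needed there. I expect this index-bookkeeping in the first move to be the only genuine obstacle: once $N$ is seen to be even, the remaining steps merely accumulate nonzero scalars, and collecting the blocks yields $x_1^{a_1+a_1'}x_2^{a_2+a_2'}y_1^{b_1+b_1'}y_2^{b_2+b_2'}z_1^{c_1+c_1'}z_2^{c_2+c_2'} = \mathbf{x}^{\mathbf{a}+\mathbf{a}'}$ times a scalar $\lambda$, as claimed. In fact every intermediate scalar is nonzero, so $\lambda \neq 0$, though the statement does not require this.
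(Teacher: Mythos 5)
Your proof is correct and follows essentially the same route as the paper's: both reduce to the observation that the relations are binomial, that moving the $x$-block leftward past the $y,z$-block flips the $x$-index once per letter passed, and that the parity conditions $b_1+b_2$ and $c_1+c_2$ even (from Proposition \ref{invariant-equation}) guarantee the index returns to where it started, while closure of $X$ under addition is routine. The only cosmetic difference is that you count the total number of flips $N=b_1+b_2+c_1+c_2$ at once, whereas the paper records the commutation past the $y$-block and the $z$-block separately.
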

\begin{proof}
Let $\mathbf{a},\mathbf{a}'\in X$ and let $\mathbf{x}^{\mathbf{a}}= x_1^{a_1}x_2^{a_2}y_1^{b_1}y_2^{b_2}z_1^{c_1}z_2^{c_2}$ and $\mathbf{x}^{\mathbf{a}'}= x_1^{a_1'}x_2^{a_2'}y_1^{b_1'}y_2^{b_2'}z_1^{c_1'}z_2^{c_2'}$ be two monomials in $B$. Using the relations in $B$, we would have
\begin{align*}
\mathbf{x}^{\mathbf{a}}\cdot \mathbf{x}^{\mathbf{a}'} &= (x_1^{a_1}x_2^{a_2}y_1^{b_1}y_2^{b_2}z_1^{c_1}z_2^{c_2})\cdot(x_1^{a_1'}x_2^{a_2'}y_1^{b_1'}y_2^{b_2'}z_1^{c_1'}z_2^{c_2'})\\
& = \lambda ~ x_1^{a_1+ a_1'}x_2^{a_2+a_2'}y_1^{b_1+b_1'}y_2^{b_2+b_2'}z_1^{c_1+c_1'}z_2^{c_2+c_2'} \\
&= \lambda ~ \mathbf{x}^{\mathbf{a}+\mathbf{a}'},\quad \text{where}\qquad \lambda\in \Bbbk.
\end{align*}
As a consequence of  Proposition \ref{invariant-equation}, one has that
$(z_1^{c_1}z_2^{c_2})x_1^{a_1'} = \mu_1 x_1^{a_1'}(z_1^{c_1}z_2^{c_2})$, with $\mu_1\in \Bbbk$ and
$(y_1^{b_1}y_2^{b_2})x_1^{a_1'} = \mu_2 x_1^{a_1'}(y_1^{b_1}y_2^{b_2})$ with $\mu_2\in \Bbbk$ because $c_1+c_2$, $b_1+b_2$
are both even.  A similar result holds if we replace
$x_1^{a_1'}$ by $x_2^{a_2'}$ in the above expression and we get new scalars $\mu_1',\mu_2'$.
In the end we obtain the desired result with some scalars $\mu_1\mu_2\mu_1'\mu_2'\alpha^i\beta^j\gamma^t=\lambda$
for some integers $i,j,t$.  If $a_1\pm a_2$ and $a_1'\pm a_2'$ are even, so also is $(a_1 + a_1')\pm (a_2 +  a_2').$
A similar result holds for $b_1\pm b_2$ and $c_1\pm c_2$.
If $a_1-a_2\equiv b_1-b_2+c_1-c_2 \mod 4$ and $a_1'-a_2'\equiv b_1'-b_2'+c_1'-c_2' \mod 4$, it follows that
$(a_1+a_1')-(a_2+a_2')\equiv (b_1+b_1')-(b_2+b_2')+(c_1+c_1')-(c_2+c_2') \mod 4$, so  $\mathbf{a}+\mathbf{a}'\in X$.
\end{proof}

The following result concerns a noncommutative analogue of SAGBI bases, which will be useful for
proving that we have found a generating set of $B^{D(\mathfrak{D}_4)}$.

\begin{lemma}\label{sagbi-bases}
Let $A$ be an $\mathbb{N}$-graded algebra with PBW basis $\{\mathbf{x}^{\mathbf{a}}\}$. Fix a graded term order $<$ on
the exponent vectors $\mathbf{a}$. Let $\mathcal{B}\subseteq A$ be a graded subalgebra of $A$ and $f_1,f_2,\ldots,f_r$
be homogeneous elements of $\mathcal{B}$ whose leading monomial has exponent vectors $\mathbf{a}_1,\mathbf{a}_2,\ldots,\mathbf{a}_r$
and such that the leading monomial of $f_1^{m_1}f_2^{m_2}\cdots f_r^{m_r}$ is equal to
$\mathbf{x}^{m_1\mathbf{a}_1+m_2\mathbf{a}_2+\cdots+m_r\mathbf{a}_r}$ for some
nonnegative integers $m_1,m_2,\cdots, m_r$. Suppose that for every $g\in\mathcal{B}$, the exponent vector
of the leading monomial of $g$ is in the submonoid generated by
$\mathbf{a}_1,\mathbf{a}_2,\ldots,\mathbf{a}_r$.  Then $f_1,f_2,\cdots,f_r$ generate $\mathcal{B}$.
\end{lemma}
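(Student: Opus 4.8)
The plan is to run the standard subduction argument used to prove that a SAGBI basis generates its subalgebra, adapted to this graded noncommutative setting, with the induction carried by the leading monomial in the term order $<$. Write $\mathcal{B}'$ for the subalgebra of $A$ generated by $f_1,\dots,f_r$. Since every $f_i$ lies in $\mathcal{B}$ we have $\mathcal{B}' \subseteq \mathcal{B}$ automatically, so the content is the reverse inclusion $\mathcal{B} \subseteq \mathcal{B}'$. As $\mathcal{B}$ is a graded subalgebra and each $f_i$ is homogeneous, it is enough to prove that an arbitrary homogeneous $g \in \mathcal{B}$ lies in $\mathcal{B}'$.

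The key step is a single subduction. Given homogeneous $g \in \mathcal{B}$ of degree $d$, let $\mathbf{x}^{\mathbf{b}}$ be its leading monomial. By the hypothesis on $\mathcal{B}$, the exponent vector $\mathbf{b}$ lies in the submonoid generated by $\mathbf{a}_1,\dots,\mathbf{a}_r$, so I may fix nonnegative integers $m_1,\dots,m_r$ with $\mathbf{b} = m_1\mathbf{a}_1 + \cdots + m_r\mathbf{a}_r$. I then form $P = f_1^{m_1}\cdots f_r^{m_r} \in \mathcal{B}'$; by the compatibility hypothesis its leading monomial is exactly $\mathbf{x}^{m_1\mathbf{a}_1 + \cdots + m_r\mathbf{a}_r} = \mathbf{x}^{\mathbf{b}}$, matching that of $g$. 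Because $<$ is a graded term order, $\mathbf{x}^{\mathbf{b}}$ has degree $d$, which forces $P$ to be homogeneous of the same degree $d$ as $g$. Choosing the scalar $c \in \Bbbk$ so that the leading coefficients agree, I set $g' = g - cP$; then $g' \in \mathcal{B}$ (both $g$ and $P$ are), $g'$ is homogeneous of degree $d$, and either $g' = 0$ or the leading monomial of $g'$ is strictly smaller than $\mathbf{x}^{\mathbf{b}}$ in $<$.

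To finish I would iterate: as long as $g' \neq 0$ it again lies in $\mathcal{B}$, so its leading exponent again lies in the submonoid generated by the $\mathbf{a}_i$ and the subduction step applies verbatim, yielding a sequence of elements of $\mathcal{B}$ with strictly decreasing leading monomials, all of degree $d$. The one point requiring care, and the step I expect to be the main obstacle, is termination of this process. I would justify it by observing that the graded component $A_d$ is finite-dimensional (only finitely many PBW monomials $\mathbf{x}^{\mathbf{a}}$ have a given degree), so $<$ restricts to a well-order on the monomials of degree $d$ and no infinite strictly descending chain can occur. Hence the iteration reaches $0$ after finitely many steps, which exhibits $g$ as a finite $\Bbbk$-linear combination of products $f_1^{m_1}\cdots f_r^{m_r}$; that is, $g \in \mathcal{B}'$. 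It is worth noting that the genuinely noncommutative difficulty, namely that the leading monomial of a product of homogeneous elements need not be the product of their leading monomials, is never confronted directly: precisely this compatibility is assumed in the hypothesis that the leading monomial of $f_1^{m_1}\cdots f_r^{m_r}$ equals $\mathbf{x}^{m_1\mathbf{a}_1+\cdots+m_r\mathbf{a}_r}$, and that is what powers the whole argument.
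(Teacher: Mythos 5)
Your proposal is correct and follows essentially the same route as the paper's own proof: reduce to homogeneous $g$, subtract a suitable scalar multiple of $f_1^{m_1}\cdots f_r^{m_r}$ whose leading monomial matches that of $g$, and iterate, with termination guaranteed because the graded term order admits no infinite strictly descending chain among monomials of a fixed degree. Your explicit remark that the hypothesis on leading monomials of products is exactly what sidesteps the noncommutative difficulty is a fair gloss on why the lemma is stated the way it is, but the argument itself is the one in the paper.
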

\begin{proof}
Since $\mathcal{B}$ is a graded subalgebra and the $f_i$ are homogeneous, it suffices to assume that $g\in\mathcal{B}$ is homogeneous
of degree $d$.  We prove that $g$ is in the subalgebra generated by $f_1,\dots,f_r$ using induction on the
(well-ordered) term order on exponent vectors.  

By hypothesis, there exist nonnegative integers $n_1,\dots,n_r$ such that the leading monomial of
$g$ is equal to the leading monomial of $f_1^{n_1}\cdots f_r^{n_r}$.  Therefore
there exists a scalar $\lambda$ such that $g - \lambda f_1^{n_1}\cdots f_r^{n_r}$ has smaller
leading monomial than $g$.  We may repeat this process, which must terminate since the
term order is graded.  It follows that $g$ is in the subalgebra generated by the $f_i$.
\end{proof}

We remark that the term order appearing in the previous lemma need not be the same as the term
order one uses to compute a Gr\"obner basis of the defining ideal of $A$.  Indeed,
this result is most useful when one has an algebra $A$ with a PBW basis $\{\mathbf{x}^\mathbf{a}\}$
such that the concept of ``lead monomial" with respect to this basis of $A$ is not well-behaved (i.e.
the lead monomial of $fg$ is not necessarily the product of the lead monomials of $f$ and $g$),
but the lead term \emph{is} well-defined for elements of the \emph{subalgebra} $\mathcal{B}$.



\subsection{The special case of \texorpdfstring{$B$}{B} with \texorpdfstring{$(\alpha,\beta,\gamma, \constb,\constc,\constd,\conste) = (1,1,-1,1,1,1,1)$}{(alpha,beta,gamma,u1,u2,u3,u4) = (1,1,-1,1,1,1,1)}}

First, note that in this case the action has a trivial homological determinant
if we specify $\alpha=1,\beta=1$ and $\gamma=-1$.
In this case, the values of the scalars $\lambda(\mathbf{a}, -)$ obtained from the action 
of elements of $\mathfrak{D}_4$ on a monomial
$\mathbf{x}^{\mathbf{a}} =  x_1^{a_1}x_2^{a_2}y_1^{b_1}y_2^{b_2}z_1^{c_1}z_2^{c_2}$ 
satisfying the conditions of Proposition \ref{invariant-equation} are given by
\begin{gather}\label{invariant-coefficients}
\begin{aligned}
&\lambda(\mathbf{a}, r)  = (-1)^{a_1+c_1c_2}\i^{c_1+c_2} = (-1)^{a_1+c_1}\i^{c_1+c_2}, \\
&\lambda(\mathbf{a}, s)  = (-1)^{a_1+c_1+c_1c_2}\i^{a_1+a_2} = (-1)^{a_1}\i^{a_1+a_2},\\
&\lambda(\mathbf{a}, rs) = (-1)^{c_1}\i^{c_1+c_2+a_1+a_2} = (-1)^{c_1}\i^{c_1+c_2+a_1+a_2}.
\end{aligned}
\end{gather}
Therefore by equations \eqref{invariant} and
\eqref{invariant-coefficients}, we need only consider
invariants of the form
\begin{equation}\label{orbit-sum}
\begin{aligned}
 f_{\mathbf{a}} &=  \mathbf{x}^{\mathbf{a}} + \lambda(\mathbf{a}, r) \mathbf{x}^{\mathbf{a}\cdot r}+\lambda(\mathbf{a},s) \mathbf{x}^{\mathbf{a}\cdot s}+\lambda(\mathbf{a},rs) \mathbf{x}^{\mathbf{a}\cdot rs}\\
&= x_1^{a_1}x_2^{a_2}y_1^{b_1}y_2^{b_2}z_1^{c_1}z_2^{c_2} +(-1)^{a_1+c_1}\i^{c_1+c_2} x_1^{a_2}x_2^{a_1}y_1^{b_2}y_2^{b_1}z_1^{c_2}z_2^{c_1}\\
&+(-1)^{a_1}\i^{a_1+a_2}x_1^{a_1}x_2^{a_2}y_1^{b_2}y_2^{b_1}z_1^{c_2}z_2^{c_1}\\
&+ (-1)^{c_1}\i^{c_1+c_2+a_1+a_2}x_1^{a_2}x_2^{a_1}y_1^{b_1}y_2^{b_2}z_1^{c_1}z_2^{c_2}.
\end{aligned}
\end{equation}

Next, we record the following result which shows that there are no new generators for the invariant subring
$B^{D(\mathfrak{D}_4)}$ beyond degree 14 in this case.
The result is a consequence of a generalized version of an upper bound due to Broer that appears in \cite[Lemma 2.2]{KKZ1}.

\begin{lemma} \label{lem:Broer}
The degree of the generators of the invariant subring $B^{D(\mathfrak{D}_4)}$ is bounded by 14 if $(\alpha,\beta,\gamma, \constb,\constc,\constd,\conste) = (1,1,-1,\constb,\constc,\constd,\conste)$ and $\constd^{16}=\conste^{16}$.
\end{lemma}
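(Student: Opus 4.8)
The plan is to deduce the bound from the generalized Broer-type estimate recorded in \cite[Lemma 2.2]{KKZ1}, verifying its hypotheses one at a time for the algebra $B$ and the induced action of $\mathfrak{D}_4$ on $B_e$; recall that $B^{D(\mathfrak{D}_4)} = (B_e)^{\mathfrak{D}_4}$ by Lemma \ref{identity-component}, so the whole problem is an ordinary finite-group invariant-theory question on the identity component. The first hypothesis, that $B$ be Artin--Schelter regular, is supplied by Theorem \ref{double-ore}. The second, that the homological determinant of the action be trivial, follows from Proposition \ref{prop:hdetQ4}: specializing $(\alpha,\beta,\gamma)=(1,1,-1)$ gives $\hdet_B(r) = -\alpha\beta\gamma = 1$ and $\hdet_B(s) = -\beta\gamma = 1$, while the group-grade component of the homological determinant was already shown there to be trivial.

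The remaining hypothesis is a finiteness condition---that $B$ be module-finite over the invariant ring $B^{D(\mathfrak{D}_4)}$---and this is exactly where the assumption $\constd^{16} = \conste^{16}$ enters. Here I would produce, among the invariants, a homogeneous system of parameters consisting of elements that are central (or at least normalizing) in $B$. Working in the skew-polynomial subalgebra $A = \Bbbk_\mathbf{q}[y_1,y_2,z_1,z_2]$, a direct commutation computation shows that a monomial $y_1^{b_1}y_2^{b_2}z_1^{c_1}z_2^{c_2}$ commutes with the $y$-generators precisely when relations of the form $\conste^{c_1}\constd^{c_2} = 1$ and $\constd^{c_1}\conste^{c_2} = 1$ hold (with analogous conditions in $b_1,b_2$ governing commutation with the $z$-generators). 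Dividing the two relations forces $(\constd/\conste)^{\,c_1-c_2} = 1$, so that taking $c_1 - c_2$ divisible by $16$ and invoking $\constd^{16} = \conste^{16}$ produces the central invariants of the required degrees; combined with the invariant normalizing monomials such as $x_1^{2}x_2^{2}$, $y_1y_2$, and $z_1z_2$ that exist unconditionally, these assemble into a homogeneous system of parameters of the full Gelfand--Kirillov dimension $6$, so that \cite[Lemma 2.2]{KKZ1} applies.

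With the hypotheses in place, the bound becomes numerical. I would compute the Hilbert series of $B^{D(\mathfrak{D}_4)} = (B_e)^{\mathfrak{D}_4}$ by a Molien-type argument: the graded dimension in degree $d$ is $\tfrac{1}{|\mathfrak{D}_4|}\sum_{g}\operatorname{tr}(g \mid (B_d)_e)$, and the traces are assembled from the scalars $\lambda(\mathbf{a},g)$ displayed in \eqref{invariant-coefficients}. Crucially these scalars involve only $\alpha,\beta,\gamma$ and powers of $\i$, not $\constb,\constc,\constd,\conste$, so the Hilbert series---and hence the resulting degree bound---is independent of the four nonzero parameters. Reading off the $a$-invariant of this rational function and substituting into the estimate of \cite[Lemma 2.2]{KKZ1}, together with the fact that $B$ has Gorenstein parameter $6$, yields the explicit value $14$.

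The main obstacle I anticipate is the verification of the module-finiteness hypothesis: one must check that the central invariants produced under $\constd^{16} = \conste^{16}$ genuinely form a homogeneous system of parameters (equivalently, that $B^{D(\mathfrak{D}_4)}$ attains full Gelfand--Kirillov dimension and $B$ is integral over it), and that none of the needed invariants degenerate. A secondary point requiring care is confirming that the $a$-invariant extracted from the Molien series feeds into the Broer estimate with exactly the shifts producing $14$ rather than a nearby value, since the passage from $B$ to the order-$8$ group acting on $B_e$ must be accounted for precisely.
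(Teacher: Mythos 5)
Your peripheral checks (AS regularity via Theorem \ref{double-ore}, trivial homological determinant via Proposition \ref{prop:hdetQ4}) are fine, but the core of your argument breaks exactly where the hypothesis $\constd^{16}=\conste^{16}$ must do its work. Centrality is the wrong target: \cite[Lemma 2.2]{KKZ1} requires a graded subalgebra $C\subseteq B^{D(\mathfrak{D}_4)}$ that is itself nice (here, an iterated Ore extension, i.e.\ its generators pairwise \emph{skew}-commute) with $B$ module-finite over $C$; the generators need not be central or even normal in $B$. Worse, your construction produces nothing for admissible parameters: take $\constd=\conste=2$, which satisfies $\constd^{16}=\conste^{16}$. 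Since $z_1y_1=\conste^{-1}y_1z_1$, $z_2y_1=\constd^{-1}y_1z_2$, the scalar by which $z_1^{c_1}z_2^{c_2}$ passes $y_1$ is $2^{-(c_1+c_2)}$, never $1$ for $c_1+c_2>0$; your conditions $\conste^{c_1}\constd^{c_2}=1$ and $\constd^{c_1}\conste^{c_2}=1$ are \emph{not} consequences of $\constd^{16}=\conste^{16}$ (only their ratio condition $(\constd/\conste)^{c_1-c_2}=1$ is), so there are no nontrivial central monomials at all. Finally, even granting a system of parameters built from degree-$16$-type elements, the numerical conclusion evaporates: the bound in \cite[Lemma 2.2]{KKZ1} is $\sum_i\deg f_i-\GKdim B$, computed from the degrees of the chosen $f_i$ (equivalently, the top module-generator degree over $C$ is the $a$-invariant plus $\sum_i\deg f_i$), so it grows with those degrees; a Molien-series computation of the Hilbert series of $B^{D(\mathfrak{D}_4)}$ --- which you do not actually carry out --- cannot recover $14$ from a high-degree choice of $C$.

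The missing idea is the paper's explicit list of \emph{low-degree} invariants: $f_1=y_2y_1$, $f_2=z_2z_1$ (invariant precisely because $(\beta,\gamma)=(1,-1)$), $f_3=x_2^2x_1^2$, $f_4=x_1^4+x_2^4$, $f_5=y_1^4+y_2^4$, $f_6=z_1^4+z_2^4$, of degrees $2,2,4,4,4,4$. One checks $f_if_j=\alpha_{ij}f_jf_i$ for scalars $\alpha_{ij}$; every pair skew-commutes automatically except $(f_5,f_6)$, where $f_6f_5=\conste^{-16}\bigl(y_1^4z_1^4+y_2^4z_2^4\bigr)+\constd^{-16}\bigl(y_1^4z_2^4+y_2^4z_1^4\bigr)$ is a scalar multiple of $f_5f_6$ if and only if $\constd^{16}=\conste^{16}$ --- this is the one and only place the hypothesis enters, and it concerns skew-commutativity, not finiteness. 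Then $C=\Bbbk[f_1,\dots,f_6]$ is an iterated Ore extension over which $B$ is finite, and \cite[Lemma 2.2]{KKZ1} gives the bound $\sum_i\deg f_i-\GKdim B=20-6=14$. Neither a homological-determinant verification nor any Hilbert-series/$a$-invariant extraction occurs in, or is needed for, this argument.
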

\begin{proof}
The Drinfeld double ${D(\mathfrak{D}_4)}$ is a semisimple Hopf algebra acting on $B$ whose 
dimension is 6. It can be verified that the subalgebra
$C=\Bbbk[y_2y_1, z_2z_1,x_2^2x_1^2, x_1^4+x_2^4, y_1^4+y_2^4,z_1^4+z_2^4]$ of the invariant 
subring $B^{D(\mathfrak{D}_4)}$ is an iterated Ore extension if $\constd^{16}=\conste^{16}$. 
Indeed, the elements $f_1=y_2y_1, f_2= z_2z_1, f_3=x_2^2x_1^2,$ 
$f_4=x_1^4+x_2^4, f_5=y_1^4+y_2^4, f_6=z_1^4+z_2^4$ satisfy
$f_if_j=\alpha_{ij}f_jf_i$ for some scalars $\alpha_{ij}$ for all $1 \leq i < j \leq 6$
except $(i,j) = (5,6)$, which holds if and only if one has $\constd^{16}=\conste^{16}$.
Therefore, by \cite[Lemma 2.2]{KKZ1}, the degrees of a minimal set of generators of 
$B^{D(\mathfrak{D}_4)}$ is bounded by $\sum_{i=1}^n \deg(f_i)- \GKdim B = 20-6 = 14$.
\end{proof}

We now compute the invariant subring for the action 
of $D(\mathfrak{D}_4)$ on the algebra $B$ in the 
case $(\alpha,\beta,\gamma,u_1,u_2,u_3,u_4) = 
(1,1,-1,1,1,1,1)$, so that $B$ is the quotient of $\Bbbk\langle x_1,x_2,y_1,y_2,z_1,z_2 \rangle$ by the relations
\begin{align*}
&x_1x_2 -  x_2x_1~,~y_1y_2 -  y_2y_1~,~z_1z_2 + z_2z_1~,~  x_2y_1 - y_1x_1, ~ x_1y_2 + y_2x_2, \\
& x_2y_2 +  y_2x_1~, ~ x_1y_1 - y_1x_2~, ~ x_2z_1 - \i z_1x_1~, ~\i x_1z_2 - z_2x_2 ~, ~\i x_2z_2 - z_2x_1,\\
& x_1z_1 - \i z_1x_2~,~  y_2z_1 - z_1y_2~, ~ y_1z_2 - z_2y_1~,~ y_1z_1 - z_1y_1~,~ y_2z_2 - z_2y_2.
\end{align*}
The remainder of this section is devoted to proving the following theorem:

\begin{theorem} \label{generatorTheorem}
Let $D(\mathfrak{D}_4)$ act on the algebra $B$ as in the statement of Theorem \ref{double-ore},
with $(\alpha,\beta,\gamma,u_1,u_2,u_3,u_4) = (1,1,-1,1,1,1,1)$.  Then the following elements minimally generate
$B^{D(\mathfrak{D}_4)}$ as a $\Bbbk$-algebra:

\begin{minipage}[t]{.48\textwidth}
\strut\vspace*{-\baselineskip}\newline 
\begin{center}
\begin{tabular}{| c | c |}
\hline
 & Generators \\ \hline \hline
$g_1$ & $y_1y_2$ \\ \hline
$g_2$ & $z_1z_2$ \\ \hline
$g_3$ & $x_1^2x_2^2$ \\ \hline
$g_4$ & $x_1^4 + x_2^4$ \\ \hline
$g_5$ & $y_1^4 + y_2^4$ \\ \hline
$g_6$ & $z_1^4 + z_2^4$ \\ \hline
$g_7$ & $(x_1^2 - x_2^2)(y_1^2 - y_2^2)$ \\ \hline
$g_8$ & $(x_1^2 + x_2^2)(z_1^2 - z_2^2)$ \\ \hline
\end{tabular}
\end{center}
\end{minipage}
\begin{minipage}[t]{.48\textwidth}
\strut\vspace*{-\baselineskip}\newline 
\begin{center}
\begin{tabular}{| c | c |}
\hline
 & Generators \\ \hline \hline
$g_9$    & $(x_1x_2)(x_1^4 - x_2^4)$ \\ \hline
$g_{10}$ & $(x_1x_2)(x_1^2 + x_2^2)(y_1^2 - y_2^2)$ \\ \hline
$g_{11}$ & $(x_1x_2)(x_1^2 - x_2^2)(z_1^2 - z_2^2)$ \\ \hline
$g_{12}$ & $(x_1x_2)(y_1^2z_1^2 + y_2^2z_2^2)$ \\ \hline
$g_{13}$ & $(x_1x_2)(y_1^2z_2^2 + y_2^2z_1^2)$ \\ \hline
$g_{14}$ & $(x_1^4 - x_2^4)(y_1^2z_2^2 + y_2^2z_1^2)$ \\ \hline
$g_{15}$ & $(x_1^2 + x_2^2)(y_1^4z_2^2 - y_2^4z_1^2)$ \\ \hline
$g_{16}$ & $(x_1^2 - x_2^2)(y_1^2z_2^4 - y_2^2z_1^4)$ \\ \hline
$g_{17}$ & $y_1^4z_2^4 + y_2^4z_1^4$ \\ \hline
\end{tabular}
\end{center}
\end{minipage}
\end{theorem}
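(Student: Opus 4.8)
The plan is to combine the orbit-sum description of the invariants from Definition \ref{def:fa} with the noncommutative SAGBI-basis criterion of Lemma \ref{sagbi-bases}, thereby reducing the entire statement to a single combinatorial fact about the monoid $X$ of \eqref{defX}. By Lemma \ref{identity-component} we work inside $(B_e)^{\mathfrak{D}_4}$, and by \eqref{invariant} every invariant is uniquely a $\Bbbk$-linear combination of the orbit sums $f_{\mathbf{a}}$ with $\mathbf{a} \in X$. Since $\mathbf{a}$ is by construction the largest exponent vector in its $\mathfrak{D}_4$-orbit under the graded lex order $x_1 > x_2 > y_1 > y_2 > z_1 > z_2$, the leading exponent vector of $f_{\mathbf{a}}$ is $\mathbf{a}$, so the leading exponent vectors of invariants run over all of $X$. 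The first concrete step is to record the leading exponent vector $\mathbf{a}_i$ of each $g_i$ (for instance $g_9 = (x_1x_2)(x_1^4 - x_2^4)$ has leading term $x_1^5 x_2$, so $\mathbf{a}_9 = (5,1,0,0,0,0)$), and to confirm that each $\mathbf{a}_i$ lies in $X$.

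Next I would verify the hypotheses of Lemma \ref{sagbi-bases}. The multiplicativity of leading terms, i.e. that the leading monomial of $g_1^{m_1}\cdots g_{17}^{m_{17}}$ equals $\mathbf{x}^{\sum_i m_i \mathbf{a}_i}$, follows from two observations: every monomial occurring in each $g_i$ has exponent vector in $X$, so by the preceding lemma on products of such monomials any product is a \emph{nonzero} scalar times a single monomial (no straightening to lower terms occurs, since the structure constants are products of the nonvanishing $q_{ij}$); and graded lex is a monomial order, so the unique largest term of the product is obtained by selecting the leading term of each factor. Granting this, Lemma \ref{sagbi-bases} reduces the assertion that the $g_i$ generate $B^{D(\mathfrak{D}_4)}$ to showing that the leading exponent vector of every invariant, equivalently every element of $X$, lies in the submonoid $S = \langle \mathbf{a}_1,\dots,\mathbf{a}_{17}\rangle \subseteq \mathbb{N}^6$. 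Since each $\mathbf{a}_i \in X$ and $X$ is closed under addition, one has $S \subseteq X$, so the task becomes exactly to prove $S = X$.

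This monoid identity is the heart of the argument, and I expect it to be the main obstacle. The set $X$ is an affine submonoid of $\mathbb{N}^6$, namely the intersection of the rational cone $\{a_1 \geq a_2,\ b_1 \geq b_2\}$ with the sublattice cut out by the even-parity conditions and the congruence $a_1 - a_2 \equiv b_1 - b_2 + c_1 - c_2 \pmod 4$; hence it has a finite Hilbert basis, and $S = X$ is equivalent to the statement that this Hilbert basis is precisely $\{\mathbf{a}_1,\dots,\mathbf{a}_{17}\}$. I would establish this by a direct, computer-assisted Hilbert-basis computation for $X$, where the degree bound of Lemma \ref{lem:Broer} guarantees that any minimal generator has degree at most $14$, making the search finite and certifying that no generators are missing. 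The genuine work lies in organizing the case analysis that exhibits every $\mathbf{a} \in X$ as a nonnegative integer combination of the seventeen vectors, with attention to how the $\pmod 4$ congruence interacts with the parity constraints, which is what forces the presence of the mixed generators $g_7,\dots,g_{17}$.

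Finally, minimality follows from the same computation. If some $g_i$ lay in the subalgebra generated by the others, then by the leading-term multiplicativity already established its leading exponent $\mathbf{a}_i$ would lie in $\langle \mathbf{a}_j : j \neq i\rangle$; but each $\mathbf{a}_i$ is an irreducible element of the Hilbert basis of $X$ and so is not a sum of the remaining generators. Hence the seventeen elements $g_i$ form a minimal algebra generating set, completing the proof.
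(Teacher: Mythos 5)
Your overall framework --- the orbit sums $f_{\mathbf{a}}$, the leading-term induction of Lemma \ref{sagbi-bases}, and the degree bound of Lemma \ref{lem:Broer} --- is exactly the paper's, and your verification that products of the $g_i$ have multiplicative lead terms is sound. The fatal step is the reduction to the monoid identity $S=X$: that identity is \emph{false}, so the Hilbert-basis computation you propose would not certify the theorem but would instead refute your own reduction. A concrete counterexample: $\mathbf{a}=(0,0,4,0,0,8)$ lies in $X$ and $f_{\mathbf{a}}\neq 0$ (up to a scalar it is $y_1^4z_2^8+y_2^4z_1^8$), but $(0,0,4,0,0,8)$ is not a nonnegative integer combination of the seventeen lead exponents: the only $\mathbf{a}_i$ with vanishing $a_1,a_2,b_2,c_1$ coordinates are $\mathbf{a}_5=(0,0,4,0,0,0)$ and $\mathbf{a}_{17}=(0,0,4,0,0,4)$, and no nonnegative combination of these gives $(0,0,4,0,0,8)$. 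The same failure occurs for $(4,0,2,0,0,6)$, $(2,0,4,0,0,6)$, $(3,1,2,0,0,4)$, and others. The theorem is nonetheless true because the set of lead exponents attained by the subalgebra $\Bbbk[g_1,\dots,g_{17}]$ is strictly larger than the submonoid $S$: sums of products of the $g_i$ can have their top terms cancel, producing new, smaller lead terms. This is precisely why the paper, after reducing to the four families $X_1,\dots,X_4$ via Proposition \ref{main-cases}, must exhibit auxiliary invariants such as $g_{18}=g_{17}g_6-g_5g_2^4$ (lead term $y_1^4z_2^8$) and $g_{19},\dots,g_{24}$, and then run a case-by-case analysis rather than a pure monoid computation. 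In SAGBI language: $\{g_1,\dots,g_{17}\}$ generates the invariant ring but is \emph{not} a SAGBI basis of it, and your argument implicitly assumes that it is one.

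The same oversight undermines your minimality argument. From $g_i\in\Bbbk[g_j:j\neq i]$ you cannot conclude that $\mathbf{a}_i\in\langle\mathbf{a}_j:j\neq i\rangle$, again because cancellation among lead terms can produce an element of the subalgebra whose lead exponent lies outside the submonoid ($g_{18}$ is itself a counterexample to the implication you invoke). Minimality must instead be checked degree by degree, verifying that $\mathbf{a}_i$ is not the lead exponent of \emph{any} element of the subalgebra generated by the remaining $g_j$, cancellation included.
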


We begin with a brief discussion of the proof strategy.  First, a direct calculation
using the action of $r$ and $s$ given by the matrices \eqref{r-s-action} shows that the
elements $g_i$ are all invariant under the $\mathfrak{D}_4$ action and are also of
identity group grade.  By Lemma \ref{sagbi-bases}, it is enough to show that any invariant
has the same leading monomial as some other element in the subalgebra generated by
the $g_i$.  By the discussion after Definition \ref{def:fa}, it
suffices to prove that this holds for invariants of the form $f_\mathbf{a}$
for $\mathbf{a} \in X$.  Also, by Lemma \ref{lem:Broer} it suffices to show
prove that this holds for those elements $f_\mathbf{a}$ of degree at most 14.
Next, we shrink the set $X$ we must consider by using the generators $g_1,g_2$ and $g_3$.
We finally conclude with a case-by-case analysis of the exponent vectors remaining.

We now give the following lemma, which will aid us in the
proof of the proposition that follows.
\begin{lemma}\label{invariant-structure}
Let $B$ be a domain, and let $G$ be a group.  Let $f,g,h \in B$
with $f = gh$ and $g \neq 0$.
\begin{enumerate}
\item If $G$ acts on $B$ and $f,g \in B^G$, then $h \in B^G$.
\item If $B$ is $G$-graded and $f,g \in B_e$ then $h \in B_e$.
\end{enumerate}
In particular, if $D(G)$ acts on $B$ and $f,g \in B^{D(G)}$, then
$h \in B^{D(G)}$.
\end{lemma}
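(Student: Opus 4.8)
The plan is to prove the three assertions in sequence, deriving the final ``in particular'' claim from part (1) together with Lemma \ref{identity-component}. The common thread is that a domain has no zero divisors, so the equation $f = gh$ with $g \neq 0$ determines $h$ uniquely; this uniqueness is what lets us transfer structure from $f$ and $g$ to $h$.

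For part (1), suppose $G$ acts on $B$ with $f, g \in B^G$. For any $\sigma \in G$, I would apply $\sigma$ to the equation $f = gh$. Since the action is by algebra automorphisms and $f, g$ are invariant, $\sigma(f) = f$ and $\sigma(g) = g$, so $f = \sigma(g)\sigma(h) = g\,\sigma(h)$. Comparing with $f = gh$ gives $g\,\sigma(h) = gh$, hence $g(\sigma(h) - h) = 0$. Because $B$ is a domain and $g \neq 0$, we conclude $\sigma(h) = h$. As $\sigma$ was arbitrary, $h \in B^G$. Part (2) is formally identical: if $B$ is $G$-graded and $f, g \in B_e$, then writing $h = \sum_{x \in G} h_x$ in graded components, the product $gh = \sum_x g h_x$ has $g h_x \in B_x$ (since $g \in B_e$), so the component of $f = gh$ in grade $x \neq e$ is $g h_x$, which must vanish as $f \in B_e$; the domain hypothesis and $g \neq 0$ then force $h_x = 0$ for all $x \neq e$, so $h = h_e \in B_e$.

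For the final ``in particular'' statement, suppose $D(G)$ acts on $B$ with $f, g \in B^{D(G)}$. By Lemma \ref{identity-component}(1), the invariants $B^{D(G)}$ coincide with $(B_e)^G$, where $B_e$ is the identity component for the induced $G$-grading and $G$ acts on $B_e$ by the restricted action. Thus $f, g \in B_e$ and $f, g \in B^G$. Applying part (2) gives $h \in B_e$, and then applying part (1) to the restricted $G$-action on the domain $B_e$ gives $h \in (B_e)^G = B^{D(G)}$, as desired.

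I expect no serious obstacle here; the argument is a routine application of cancellation in a domain, and the only subtlety is bookkeeping the two structures ($G$-action and $G$-grading) that $D(G)$-invariance packages together via Lemma \ref{identity-component}. The one point to state carefully is that $B_e$ is itself a domain (as a subalgebra of the domain $B$) and is closed under the restricted $G$-action, so that part (1) may legitimately be invoked for $B_e$ in the final step.
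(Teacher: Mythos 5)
Your proposal is correct and follows essentially the same route as the paper: cancellation in a domain for both parts, and Lemma \ref{identity-component} to reduce the $D(G)$ statement to the $G$-action on the identity component. Your write-up is slightly more careful than the paper's one-line deduction of the ``in particular'' claim (noting that $B_e$ is a domain closed under the restricted action), but this is a presentational difference, not a different argument.
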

\begin{proof}
First the first claim, note that if $f = gh$ and $g$ are both in $B^G$, then
for any $v \in G$, one has that
$$0 = f - f = gh - (gh)\cdot v = (g\cdot v)(h\cdot v) - gh = g(h\cdot v - h)$$
so that the result follows if $B$ is a domain.

For the second claim, write $f = f_e$, $g = g_e$ and $h = \sum_{w \in G} h_w$.
Then $gh = g_e\sum_{w \in G} h_w = \sum_{w \in G} g_eh_w = f_e$.  This implies
that for every $w \neq e$, one has $g_eh_w = 0$ and hence $h_w = 0$ for $w \neq e$
as desired.  The final remark follows from Lemma \ref{identity-component}.
\end{proof}

\begin{proposition}\label{main-cases}
Let $\aa \in X$.  Then $f_\aa = \pm m f_{\aa'}$ for some  
monomial invariant $m$ in the subalgebra generated by
$g_1,g_2$ and $g_3$, and
$\aa' \in (X_1 \cup X_2 \cup X_3 \cup X_4) \subseteq X$
where
\begin{align*}
X_{1} &= \{(2a, 0 ,2b,0,2c,0)\;|\; a,b,c\in\mathbb{N}\}, \\
X_{2} &= \{(2a, 0 ,2b,0,0, 2c)\;|\; a,b,c\in\mathbb{N}, b,c >0 \},\\
X_{3} &= \{(2a-1, 1 ,2b,0,2c,0)\;|\; a,b,c\in\mathbb{N}, a > 0\}, \text{ and }\\
X_{4} &= \{(2a-1, 1 ,2b,0,0,2c)\;|\; a,b,c\in\mathbb{N}, a,b,c > 0\}.
\end{align*}
\end{proposition}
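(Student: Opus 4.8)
The plan is to exploit that $g_1 = y_1y_2$, $g_2 = z_1z_2$ and $g_3 = x_1^2x_2^2$ form a reservoir of ``even'' exponents that can be stripped off an orbit sum one factor at a time. First I would record the exponent vectors $\mathbf{e}(g_1) = (0,0,1,1,0,0)$, $\mathbf{e}(g_2) = (0,0,0,0,1,1)$ and $\mathbf{e}(g_3) = (2,2,0,0,0,0)$ and observe that each is fixed by the action of $\mathfrak{D}_4$ on exponent vectors (both $r$ and $s$ only permute coordinates within the three pairs, and each $\mathbf{e}(g_i)$ is constant on those pairs). I would also check directly from the relations of $B$ in this special case that each $g_i$ commutes with every algebra generator up to a sign, so that for $m = g_1^ig_2^jg_3^k$ with exponent $\mathbf{e} = \mathbf{e}(m) = (2k,2k,i,i,j,j)$ one has $m\,\mathbf{x}^{\mathbf{a}'} = \pm\,\mathbf{x}^{\mathbf{a}'+\mathbf{e}}$ for every monomial $\mathbf{x}^{\mathbf{a}'}$.

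These two facts give the factoring mechanism. Since $m$ is invariant, $m f_{\mathbf{a}'}$ is again invariant, and because $\mathbf{e}$ is $\mathfrak{D}_4$-fixed the support of $m f_{\mathbf{a}'}$ is exactly $\mathbf{e}$ plus the orbit of $\mathbf{a}'$, that is, the orbit of $\mathbf{a} := \mathbf{a}' + \mathbf{e}$. As $B$ is a domain, $m f_{\mathbf{a}'}\neq 0$, so it is a nonzero invariant supported on a single orbit and hence a scalar multiple of $f_{\mathbf{a}}$; comparing the coefficient of the leading monomial $\mathbf{x}^{\mathbf{a}}$ on both sides, which is $\pm 1$ by the sign computation above together with the normalization of $f_{\mathbf{a}}$ in Definition \ref{def:fa}, yields $f_{\mathbf{a}} = \pm\,m f_{\mathbf{a}'}$. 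Thus it suffices, given $\mathbf{a} \in X$, to subtract a vector $\mathbf{e}(m)$ from $\mathbf{a}$ and land inside $X_1\cup X_2\cup X_3\cup X_4$.

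For the reduction itself I would invoke the parity constraints of Proposition \ref{invariant-equation}: for $\mathbf{a}\in X$ the pairs $(a_1,a_2)$, $(b_1,b_2)$, $(c_1,c_2)$ each consist of equal-parity entries, with $a_1 \geq a_2$ and $b_1 \geq b_2$. Subtracting $g_3^k$ with $k = \lfloor a_2/2\rfloor$ drives $a_2$ down to $0$ (if $a_1,a_2$ are even) or to $1$ (if odd); subtracting $g_1^{b_2}$ drives $b_2$ to $0$; and subtracting $g_2^{\min(c_1,c_2)}$ drives $\min(c_1,c_2)$ to $0$. Each step keeps the vector in $\mathbb{N}^6$ and leaves the differences $a_1-a_2$, $b_1-b_2$, $c_1-c_2$ unchanged, so the evenness conditions and the mod-$4$ congruence defining $X$ persist and the reduced vector $\mathbf{a}'$ still lies in $X$. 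Splitting into four cases according to the parity of $a_1$ (giving $a_2'\in\{0,1\}$) and to whether $c_1\geq c_2$ (giving $c_2'=0$) or $c_2 > c_1$ (giving $c_1'=0$) places $\mathbf{a}'$ in $X_1$, $X_2$, $X_3$, or $X_4$, while the boundary hypotheses $a>0$ (from $a_2'=1\le a_1'$) and $c>0$ (from $c_1'=0\neq c_2'$) are immediate.

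The step I expect to be the real obstacle is reconciling the ordering of $c_1,c_2$ with the constraints $b>0$ demanded in $X_2$ and $X_4$. The difficulty is that $s$ interchanges the $y$-pair and the $z$-pair simultaneously, so one cannot order $c_1,c_2$ independently of $b_1,b_2$: a naive reduction with reduced $y$-exponent $b=0$ but $c_1'=0<c_2'$ produces a vector such as $(2a,0,0,0,0,2c)$ which lies in no $X_i$. The remedy, which I would fold into the case analysis, is that when $b_1'=b_2'=0$ the element $s$ acts on $\mathbf{a}'$ by interchanging only $c_1'$ and $c_2'$ (fixing everything else), so $f_{\mathbf{a}'} = \pm f_{\mathbf{a}'\cdot s}$ and we may replace $\mathbf{a}'$ by $\mathbf{a}'\cdot s$, which has $c_1\geq c_2$ and hence lies in $X_1$ or $X_3$; the scalar here is genuinely $\pm 1$ because $a_1'+a_2'$ is even in these vectors. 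This is precisely the phenomenon encoded by requiring $b>0$ in $X_2$ and $X_4$, and keeping the attendant signs straight through the orbit-sum identities is where the bookkeeping must be handled with care.
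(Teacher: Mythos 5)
Your proof is correct and follows essentially the same strategy as the paper's: strip powers of $g_3,g_1,g_2$ from $\xx^{\aa}$ to force $a_2\in\{0,1\}$, $b_2=0$ and $\min(c_1,c_2)=0$, and check that the factorization interacts with the $\mathfrak{D}_4$-action only through a sign independent of the group element, so that $f_{\aa}=\pm m f_{\aa'}$; your derivation of this identity from the uniqueness of the orbit-sum decomposition \eqref{invariant} is an equivalent substitute for the paper's direct comparison of $\xx^{\aa\cdot g}$ with $m\,\xx^{\aa'\cdot g}$. The one place you go beyond the written proof is the case $b_1=b_2$, $c_2>c_1$ (e.g.\ $\aa=(0,0,0,0,0,4)\in X$), where the naive reduction produces $(\ast,\ast,0,0,0,2c)$, which lies in none of the $X_i$; the paper simply asserts $\aa'\in X_1\cup\cdots\cup X_4$ at this point, and your remedy --- apply $s$ to swap $c_1'$ and $c_2'$ at the cost of the factor $\lambda(\aa',s)=(-1)^{a_1'}\i^{\,a_1'+a_2'}=\pm1$, which keeps $\aa'\cdot s$ in $X$ because both sides of the mod-$4$ congruence are even --- is exactly the right patch and is worth recording.
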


\begin{proof}
Let $\aa \in X$.  Recall that $f_\aa$ as defined below is an invariant:
$$f_{\mathbf{a}} =  \mathbf{x}^{\mathbf{a}} + \lambda(\mathbf{a}, r) \mathbf{x}^{\mathbf{a}\cdot r}+\lambda(\mathbf{a}, s) \mathbf{x}^{\mathbf{a}\cdot s}+\lambda(\mathbf{a}, rs) \mathbf{x}^{\mathbf{a}\cdot rs}.$$
Since $a_1\geq a_2$, the monomial $\mathbf{x}^{\mathbf{a}}$ can be written as
$$\xx^\aa = x_1^{a_1}x_2^{a_2}y_1^{b_1}y_2^{b_2}z_1^{c_1}z_2^{c_2} =
\begin{cases}
  (x_1^2x_2^2)^{\frac{a_2}{2}}(x_1^{a_1-a_2}y_1^{b_1}y_2^{b_2}z_1^{c_1}z_2^{c_2}) & \text{if}~a_2~\text{is even}, \\
  (x_1^2x_2^2)^{\frac{a_2-1}{2}}(x_1^{a_1-a_2+1}x_2y_1^{b_1}y_2^{b_2}z_1^{c_1}z_2^{c_2}) & \text{if}~a_2~\text{is odd.}
  \\
\end{cases}$$
Since $a_1 - a_2$ is even, the exponent vector of the right factor in the above
expressions is either $(2a,0,b_1,b_2,c_1,c_2)$ for some $a \in \mathbb{N}$, or $(2a-1,1,b_1,b_2,c_1,c_2)$
for some $a \in \mathbb{N}$ with $a > 0$.  Note that since $x_1^2x_2^2$ is an invariant but $x_1x_2$ is not,
we must consider the case that there is a nonzero value in the $x_2$ coordinate of the exponent.

Next, since $b_1 \geq b_2$ and since $y_1y_2$ is an invariant,
we may factor out all $y_2$ terms by removing the corresponding $y_1$ term to the left of the monomial.  We therefore
have the factorizations:
$$\xx^\aa = 
\begin{cases}
  (x_1^2x_2^2)^{\frac{a_2}{2}}(y_1y_2)^{b_2}(x_1^{a_1-a_2}y_1^{b_1-b_2}z_1^{c_1}z_2^{c_2}) & \text{if}~a_2~\text{is even}, \\
  (-1)^{b_2}(x_1^2x_2^2)^{\frac{a_2-1}{2}}(y_1y_2)^{b_2}(x_1^{a_1-a_2+1}x_2y_1^{b_1-b_2}z_1^{c_1}z_2^{c_2}) & \text{if}~a_2~\text{is odd.}
  \\
\end{cases}$$
Finally, we may use a power of $z_1z_2$ to reduce to the case
when there is only one nonzero component in the last two components 
of the exponents we must consider.  Indeed, we also have the 
following factorizations, where we have replaced
$y_1y_2$, $z_1z_2$, and $x_1^2x_2^2$ by $g_1,g_2$ and $g_3$, 
respectively, in order to conserve space:
$$\xx^\aa = 
\begin{cases}
  (-1)^{\binom{c_2}{2}}g_3^{\frac{a_2}{2}}g_1^{b_2}g_2^{c_2}(x_1^{a_1-a_2}y_1^{b_1-b_2}z_1^{c_1-c_2}) & \text{if}~a_2~\text{is even,}~c_1 \geq c_2, \\
  (-1)^{\binom{c_1}{2}}g_3^{\frac{a_2}{2}}g_1^{b_2}g_2^{c_1}(x_1^{a_1-a_2}y_1^{b_1-b_2}z_2^{c_2-c_1}) & \text{if}~a_2~\text{is even,}~c_1 > c_2, \\
  (-1)^{b_2+\binom{c_2}{2}}g_3^{\frac{a_2-1}{2}}g_1^{b_2}g_2^{c_2}(x_1^{a_1-a_2+1}x_2y_1^{b_1-b_2}z_1^{c_1-c_2}) & \text{if}~a_2~\text{is odd,}~c_1 \geq c_2, \\
  (-1)^{b_2+\binom{c_1}{2}}g_3^{\frac{a_2-1}{2}}g_1^{b_2}g_2^{c_1}(x_1^{a_1-a_2+1}x_2y_1^{b_1-b_2}z_2^{c_2-c_1}) & \text{if}~a_2~\text{is odd,}~c_1 > c_2.
  \\
\end{cases}$$
Now let $m$ be the monomial invariant which is the product of 
the powers of $g_1,g_2,g_3$ appearing as the left factor in 
the previous display, and let
$\aa'$ be the exponent vector of the remaining factor on the right.  Note that
$\aa' \in X_1\cup X_2\cup X_3\cup X_4$ as in the statement of the theorem.
One checks that $\mathbf{x}^{\aa.g} = \pm m\mathbf{x}^{\aa'.g}$ for all
$g$ in $\mathfrak{D}_4$, and that the sign
that appears is independent of $g$.  Hence one has
$$f_\aa = \pm m(x^{\aa'} + \lambda(\aa,r)x^{\aa'\cdot r} + \lambda(\aa,s)x^{\aa'\cdot s} + \lambda(\aa,rs)x^{\aa'\cdot rs}).$$
Remark \ref{invariant-structure} shows that the
parenthesized expression on the right is an invariant,
and, in particular, it is equal to $f_{\aa'}$.
\end{proof}

It now suffices to show that given any $\aa \in X_j$ for
$j = 1,\dots,4$ such that the degree of $\xx^\aa$ is at most
14, there exists an invariant generated by those 
elements in Theorem \ref{generatorTheorem} whose lead term is
$\xx^\aa$.  We proceed with a case-by-case analysis.

\textit{\textbf{Case 1}: Exponent vector is in $X_{1}$, so has form $(2a,0,2b,0,2c,0)$ for some $a,b,c \in \mathbb{N}$:}
In this case, the invariant is given by
\begin{equation}
f_{\mathbf{a}}=  x_1^{2a}y_1^{2b}z_1^{2c} +(-1)^{c} x_2^{2a}y_2^{2b}z_2^{2c}+(-1)^{a}x_1^{2a}y_2^{2b}z_2^{2c}+ (-1)^{a+c}x_2^{2a}y_1^{2b}z_1^{2c}.
\end{equation}
Recall that $a$ and $b + c$ have the same parity based on
the definition of $X_1 \subseteq X$.  The following table gives 
the generators that we will use in this case:\\
~\\
\begin{center}\begin{tabular}{| c | c | c|} \hline
 & Generators of invariant ring & Exponent vector of lead monomial \\ \hline \hline
              $g_4$       &  $x_1^4+x_2^4$ & $(4,0,0,0,0,0)$ \\ \hline
                $g_5$     &  $y_1^4+y_2^4$  & $(0,0,4,0,0,0)$ \\ \hline
                $g_6$     &  $z_1^4+z_2^4$ & $(0,0,0,0,4,0)$\\ \hline
                $g_7$     &  $(x_1^2-x_2^2)(y_1^2-y_2^2)$  & $(2,0,2,0,0,0)$\\ \hline
                 $g_8$    & $(x_1^2 + x_2^2)(z_1^2 - z_2^2)$ & $(2,0,0,0,2,0)$\\ \hline
\end{tabular}\end{center}

If $a,b$ and $c$ are even, use $g_4,g_5$ and $g_6$ 
to create an element whose lead term is $\xx^\aa$. In this case, 
for example, the lead monomial of $f_{\mathbf{a}}$ has exponent 
vector $(4a',0,4b',0,4c',0)$ which can be written as a nonnegative 
integer combination of the exponent vectors of lead monomials of $g_4,g_5,g_6$, namely 
$a'(4,0,0,0,0,0)+b'(0,0,4,0,0,0)+c'(0,0,0,0,4,0)$. 

If $a$ is even, and $b$ and $c$ are odd, then $a = 0$ is not possible, otherwise, $f_{\mathbf{a}}=0$. Write $a=2a', b=2b'+1, c=2c'+1$, so the lead monomial of $f_{\mathbf{a}}$ has exponent vector $(4a',0,4b'+2,0,4c'+2,0)$, which can be written as a nonnegative integer combination of the exponent vectors of the lead monomials of $g_4$, $g_7g_8$, $g_5$ and $g_6$, namely
$(a'-1)(4,0,0,0,0,0)+(4,0,2,0,2,0)+b'(0,0,4,0,0,0)+c'(0,0,0,0,4,0)$.

If $a$ is odd, $b$ even and $c$ odd, write $a=2a'+1, b=2b', c=2c'+1$, so the lead monomial of $f_{\mathbf{a}}$ has exponent vector $(4a'+2,0,4b',0,4c'+2,0)$ which can be written as a nonnegative integer combination of the lead monomials of $g_4,g_5,g_6,g_8$.  That is: $a'(4,0,0,0,0,0)+b'(0,0,4,0,0,0) +c'(0,0,0,0,4,0)+(2,0,0,0,2,0)$.

If $a$ is odd, $b$ odd and $c$ even, write $a=2a'+1, b=2b'+1, c=2c'$, so the lead monomial of $f_{\mathbf{a}}$ has exponent vector $(4a'+2,0,4b'+2,0,4c',0)$ which can be written as a nonnegative integer combination of the lead monomials of $g_4,g_5,g_6,g_7$.  That is: $a'(4,0,0,0,0,0)+b'(0,0,4,0,0,0)+c'(0,0,0,0,4,0)+(2,0,2,0,0,0)$. 

\textit{\textbf{Case 2}: Exponent vector is in $X_{2}$, so has form $(2a,0,2b,0,0,2c)$ with $a,b,c \in \mathbb{N}$ and $b,c > 0$:} In this case, the invariant is given by
\begin{equation*}
f_{\mathbf{a}} =  x_1^{2a}y_1^{2b}z_2^{2c} +(-1)^{c} x_2^{2a}y_2^{2b}z_1^{2c}+(-1)^{a}x_1^{2a}y_2^{2b}z_1^{2c}+ (-1)^{a+c}x_2^{2a}y_1^{2b}z_2^{2c}.
\end{equation*}
Once again, we have that $a$ and $b+c$ have the same parity based on the definition of $X_2 \subseteq X$.  The following table gives the possible generators we will use to produce an invariant whose lead term is $\xx^\aa$.\\
\begin{center}
\begin{tabular}{| c | c | c|} \hline
 & Generators used & Exponent vector \\ \hline \hline
 $g_4$    & $x_1^4+x_2^4$ & $(4,0,0,0,0,0)$ \\ \hline
 $g_5$    & $y_1^4+y_2^4$  & $(0,0,4,0,0,0)$ \\ \hline
 $g_6$    & $(x_1^2-x_2^2)(y_1^2-y_2^2)$  & $(2,0,2,0,0,0)$\\ \hline
 $g_{14}$ & $(x_1^4-x_2^4)(y_1^2z_2^2+y_2^2z_1^2)$ & $(4,0,2,0,0,2)$\\ \hline
 $g_{15}$ & $(x_1^2+x_2^2)(y_1^4z_2^2-y_2^4z_1^2)$ & $(2,0,4,0,0,2)$\\ \hline
 $g_{16}$ & $(x_1^2-x_2^2)(y_1^2z_2^4-y_2^2z_1^4)$ & $(2,0,2,0,0,4)$\\ \hline
 $g_{17}$ & $y_1^4z_2^4+y_2^4z_1^4$ & $(0,0,4,0,0,4)$\\ \hline
\end{tabular}
\end{center}
We will also require the following 
invariants in order to obtain some other
exponent vectors.  While they are not
generators, they are in the subalgebra
generated by them, as one can check with 
the equations which follow the table.
\begin{center}
\begin{tabular}{| c | c | c|} \hline
& Other invariants used & Exponent vector \\ \hline
                $g_{18}$     &  $y_1^4z_2^8+y_2^4z_1^8$  & $(0,0,4,0,0,8)$\\ \hline
                 $g_{19}$    & $(x_1^4 - x_2^4)(y_1^2z_2^6 + y_2^2 z_1^6)$ & $(4,0,2,0,0,6)$\\ \hline
                 $g_{20}$    & $(x_1^2+x_2^2)(y_1^4z_2^6 - y_2^4z_1^6)$ & $(2,0,4,0,0,6)$\\ \hline
\end{tabular}
\end{center}
\begin{eqnarray*}
g_{18} & = & g_{17}g_6 - g_5g_2^4, \\
g_{19} & = & g_{14}g_6 + g_7g_8g_2^2 + g_{14}g_2^2, \\
g_{20} & = & g_{15}g_6 + g_8g_5g_2^2 + g_{15}g_2^2.
\end{eqnarray*}

If $a$ is even, and $b$ and $c$ are even, we will consider the cases in which 
$b<c, b=c, b>c$.  If $b=c$, use $g_4$ and $g_{17}$ to create an invariant
whose lead term is $\xx^\aa$.   If $b>c$, use $g_{17}$ repeatedly to match the 
exponent for $z_2$, then use $g_4$ and $g_5$ for the remainder of the exponents 
present to create an invariant whose lead term is $\xx^\aa$.
If $b<c$, use $g_4$ to obtain the exponent for $x_1$.  To handle the remaining 
exponent, since $b>0$, the only exponent vector of this type of degree less or 
equal to 14 is $(0,0,4,0,0,8)$ which is the lead term of $g_{18}$. 

If $a$ is even, and $b$ and $c$ are odd; then $a=0$ is not possible, otherwise, 
$f_{\mathbf{a}}=0$.  The exponent vector is therefore of the form
$(4a',0,4b'+2,0,0,4c'+2)$.  If $b',c' > 0$, then we may use $g_{14}$ to
reduce the exponent to one handled by the previous bullet paragraph.
If $c' = 0$, then we may use $g_4,g_5$ and $g_{14}$ to create
an invariant of the desired exponent.  This leaves us with
the case $b' = 0$ and $c' \neq 0$, which would imply that $a' + c' \leq 2$.
Since $a' > 0$, our degree restriction leaves us only the case $a' = c' = 1$ which 
gives exponent vector $(4,0,2,0,0,6)$ which is covered by $g_{19}$.

If $a$ is odd, $b$ even and $c$ odd, we know that since $b>0$, the lead monomial 
has exponent vector $(2a,0,2b,0,0,2c)=(4a'+2,0,4b',0,0,4c'+2)$.  There are only 
four cases of such exponents with total degree less than or equal to $14$.  These 
are $(2,0,4,0,0,2)$, $(2,0,4,0,0,6)$, $(6,0,4,0,0,2)$ and $(2,0,8,0,0,2)$.  Such 
exponents may be obtained by using $g_4,g_5,g_{15}$ and $g_{20}$. 

If $a$ is odd, $b$ odd and $c$ even, write $a=2a'+1, b=2b'+1, c=2c'$, so that the 
lead monomial of $f_{\mathbf{a}}$ has exponent vector $(4a'+2,0,4b'+2,0,0,4c')$. 
Various cases arise here such that the total degree is less than or equal
to 14, which are obtained by using $g_4,g_5,g_6$ or $g_{16}$.


\textit{\textbf{Case 3}: Exponent vector is in $X_{3}$, so has form 
$(2a-1,1,2b,0,2c,0)$}: In this case 
$a$ and $b+c$ have opposite parity, $a,b,c \in \mathbb{N}$
with $a > 0$, and $f_\mathbf{a}$ is:
\begin{eqnarray*}
f_{\mathbf{a}} & = & x_1^{2a-1}x_2y_1^{2b}z_1^{2c} - (-1)^{c} x_1x_2^{2a-1}y_2^{2b}z_2^{2c} - \\
               &   & (-1)^{a}x_1^{2a-1}x_2y_2^{2b}z_2^{2c} + (-1)^{a+c}x_1x_2^{2a-1}y_1^{2b}z_1^{2c}.
\end{eqnarray*}
The following table gives the possible generators we will use
in this case:
\begin{center}
\begin{tabular}{| c | c | c|} \hline
 & Generators used & Exponent vector \\ \hline \hline
$g_9$    & $(x_1x_2)(x_1^4 - x_2^4)$ & $(5,1,0,0,0,0)$ \\ \hline
$g_{10}$ & $(x_1x_2)(x_1^2 + x_2^2)(y_1^2 - y_2^2)$ & $(3,1,2,0,0,0)$ \\ \hline
$g_{11}$ & $(x_1x_2)(x_1^2 - x_2^2)(z_1^2 - z_2^2)$ & $(3,1,0,0,2,0)$ \\ \hline
$g_{12}$ & $(x_1x_2)(y_1^2z_1^2 + y_2^2z_2^2)$ & $(1,1,2,0,2,0)$ \\ \hline
\end{tabular}
\end{center}

If $b=0$ and $c=0$, then $a$ must be odd, say $a = 2a'-1$. Having $a=1$ forces $f_{\mathbf{a}}=0$ so we assume that $a'\geq 2$. The exponent vector is therefore $(4a'-3,1,0,0,0,0) = (5,1,0,0,0,0) + (4a'-8,0,0,0,0,0)$.  The former exponent is $g_9$, and the latter exponent is a multiple of the exponent of $g_4$.

If $b$ and $c$ are both positive, then use $g_{12}$ and we are 
left with a term having lead monomial in the form 
$(2a-2,0,2b-2,0,2c-2,0)$ which has been considered in Case 1.

If $b>0$, $c=0$ and $a=1$, then the orbit sum
$f_{\mathbf{a}}=0$. So we can assume $a>1$. 
Using $g_{10}$ first, we are left with a leading monomial of 
the form $(2(a-2),0,2(b-1),0,0,0)$.
Since $a-2$ and $b-1$ are the same parity, this case has been 
treated in the first or fourth paragraph
of Case 1 taking $c=0$.

If $b=0$ and $c>0$ and $a=1$, the orbit sum $f_{\mathbf{a}}=0$, so we can assume $a>1$. 
Using $g_{11}$, we are left with a leading monomial of the form $(2(a-2),0,0,0,2(c-1),0)$.
Since $a-2$ and $c-1$ are the same parity, this case has been treated in the first or third paragraph of Case 1 taking $b=0$.

\textit{\textbf{Case 4}: Exponent vector is in $X_{4}$ so has 
form $(2a-1,1,2b,0,0,2c)$:}
In this case, $a$ and $b+c$ have opposite parity, $a,b,c$ are 
positive integers, and $f_\aa$ is
\begin{eqnarray*}
f_{\mathbf{a}} & = & x_1^{2a-1}x_2y_1^{2b}z_2^{2c} - (-1)^{c} x_1x_2^{2a-1}y_2^{2b}z_1^{2c}- \\
               &   & (-1)^{a}x_1^{2a-1}x_2y_2^{2b}z_1^{2c}- (-1)^{a+c}x_1x_2^{2a-1}y_1^{2b}z_2^{2c}.
\end{eqnarray*}

Since $c$ is positive, we may use $g_{13}$ and we are left with a term 
having lead monomial in the form $(2a-2,0,2b-2,0,0,2c-2)$ which has been 
considered in Case 2 if $b \geq 2$.  If $b = 1$, then the exponent vector 
is of the form $(2a-1,1,2,0,0,2c-2)$.  There are four exponent vectors of 
this form of degree less than or equal to 14 that may not be obtained 
using the exponent vectors of the lead terms of our given generators,
and they are given by the following invariants:
\begin{center}
\begin{tabular}{| c | c | c|} \hline
 & Other Invariants & Exponent vector of lead monomial \\
 \hline \hline
 $g_{21}$ & $(x_1x_2)(x_1^2+x_2^2)(y_1^2z_2^4 - y_2^2z_1^4)$ & $(3,1,2,0,0,4)$ \\ \hline
 $g_{22}$ & $(x_1x_2)(y_1^2z_2^6 + y_2^2z_1^6)$ & $(1,1,2,0,0,6)$ \\ \hline
 $g_{23}$ & $(x_1x_2)(x_1^2+x_2^2)(y_1^2z_2^8 - y_2^2z_1^8)$ & $(3,1,2,0,0,8)$ \\ \hline
 $g_{24}$ & $(x_1x_2)(y_1^2z_2^{10} + y_2^2z_1^{10})$ & $(1,1,2,0,0,10)$ \\ \hline
\end{tabular}
\end{center}
Each of these elements are in the subalgebra of the invariant ring 
generated by the elements in Theorem \ref{generatorTheorem}.  Indeed,
one has that:
\begin{eqnarray*}
g_{21} & = & \frac{1}{2}(g_{10}g_6 - g_8g_{12} - g_8g_{13}), \\
g_{22} & = & g_{13}g_6 + g_{12}g_2^2, \\
g_{23} & = & g_{21}g_6 - g_{10}g_2^4, \\
g_{24} & = & g_{22}g_6 - g_{13}g_2^4.
\end{eqnarray*}
This completes our proof of Theorem \ref{generatorTheorem}.\\

Since we chose parameters so that the homological determinant of the action of $D(\mathfrak{D}_4)$  on $B$ is trivial, the following corollary follows from \cite{KKZ0}.
\begin{corollary}
The ring of invariants $B^{D(\mathfrak{D}_4)}$ is an Artin-Schelter Gorenstein algebra.
\end{corollary}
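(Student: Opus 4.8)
The plan is to deduce the statement from the noncommutative analogue of Watanabe's theorem for Hopf actions, which is the main result of \cite{KKZ0}: if a semisimple Hopf algebra $H$ acts by graded automorphisms on a connected graded Noetherian AS Gorenstein algebra $A$ with trivial homological determinant, then the invariant subring $A^H$ is AS Gorenstein. Thus the whole argument reduces to verifying the hypotheses of that theorem for $A = B$ and $H = D(\mathfrak{D}_4)$, and to checking that the notion of ``trivial homological determinant'' used there agrees with the one computed in Proposition \ref{prop:hdetQ4}.

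First I would record the required properties of $B$. By Theorem \ref{double-ore}, $B$ is AS regular of global dimension six, so in particular it is connected graded and AS Gorenstein. It is also Noetherian, being obtained from the skew polynomial ring $\Bbbk_\mathbf{q}[y_1,y_2,z_1,z_2]$ (which is Noetherian) by a double Ore extension, and double Ore extensions of Noetherian algebras are Noetherian by \cite{ZZ}. The Hopf algebra $D(\mathfrak{D}_4)$ is finite dimensional and, since $\Bbbk$ has characteristic zero, semisimple, as already noted in Section \ref{background}. Finally, the action is graded: the generating space $B_1 = V$ is a $D(\mathfrak{D}_4)$-module and the defining relations span a submodule of $V \otimes V$ (it is moreover inner faithful by Proposition \ref{inner-faithful-n-2}, although inner faithfulness is not needed for the Gorenstein conclusion).

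Next I would confirm that the homological determinant is trivial for the chosen parameters. The homological determinant is the character $\hdet_B : D(\mathfrak{D}_4) \to \Bbbk$ of the one-dimensional representation carried by the superpotential $\mathbf{w}_B$, and it is trivial exactly when $\hdet_B = \epsilon$. By Proposition \ref{prop:hdetQ4} the group-grade component of $\hdet_B$ is already trivial, so it remains to evaluate $\hdet_B$ on the group generators $r$ and $s$. Substituting $(\alpha,\beta,\gamma) = (1,1,-1)$ into $\hdet_B(r) = -\alpha\beta\gamma$ and $\hdet_B(s) = -\beta\gamma$ gives $\hdet_B(r) = \hdet_B(s) = 1$; hence $\hdet_B$ agrees with $\epsilon$ on a generating set of $D(\mathfrak{D}_4)$ and the homological determinant is trivial, consistent with the criterion $\alpha = 1$, $\beta = -\gamma$ of Proposition \ref{prop:hdetQ4}.

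With all hypotheses in place, the conclusion is immediate from \cite{KKZ0}: the invariant ring $B^{D(\mathfrak{D}_4)}$ is AS Gorenstein. I expect the only genuinely delicate point to be the bookkeeping in the second and third steps, namely matching the conventions for the homological determinant (and the precise grading and finiteness hypotheses) used in \cite{KKZ0} with those of the present paper; once that framework is aligned, the numerical check and the final citation are routine.
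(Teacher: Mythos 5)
Your proposal is correct and follows exactly the route the paper takes: the paper's entire justification is the observation that the parameters were chosen so that the homological determinant is trivial, followed by a citation of \cite{KKZ0}. Your additional verification of the hypotheses (Noetherian, connected graded, semisimple Hopf action, and the numerical check that $(\alpha,\beta,\gamma)=(1,1,-1)$ gives $\hdet_B(r)=\hdet_B(s)=1$) is accurate and merely makes explicit what the paper leaves implicit.
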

\end{section}

\begin{section} {\texorpdfstring{$D(\mathfrak{D}_8)$}{D(D8)} the case \texorpdfstring{$n=4$}{n=4}: the dicyclic group of order 16}
\label{dicyclic-n-4}

We next consider the dicyclic group of order 16. Using the results of
\cite{W1, W2} we see that
there are 46 simple modules $V_i$, for $i=0,\ldots, 45$. Table \ref{simpleD8} 
shows the conjugacy classes (with choice of representative $a$) with 
centralizers, simple modules and transversals for each conjugacy class.

\begin{table}[H]
\caption{Simple $D(\mathfrak{D}_8)$-modules}
\begin{center} 
\begin{tabular}{|c|c|c|c|}
\hline
  Conj class & $C_G(a)$ & Rep $C_G(a)$& transveral\\
\hline
\hline
$\{a=e\}$ & $\mathfrak{D}_{8}$ & $\psi_0, \psi_1, \psi_2, \psi_3, \chi_1,\chi_2,\chi_3$& $\{e\}$\\
\hline
 &  & $V_0, V_1, V_2, V_3, V_4, V_5, V_6$ & \\ \hline \hline
$\{a=r^4\}$ & $\mathfrak{D}_{8}$ & $\psi_0, \psi_1, \psi_2, \psi_3, \chi_1,\chi_2,\chi_3 $& $\{e\}$\\
\hline
 &  & $V_7, V_8, V_9, V_{10}, V_{11}, V_{12}, V_{13}$ & \\
\hline \hline
$\{a=r, r^7\}$ & $\mathbb{Z}_8 = \langle r \rangle $ & $\alpha_0, \alpha_1, \alpha_2, \alpha_3, \alpha_4, \alpha_5, \alpha_6, \alpha_7$&$\{e, s\}$ \\
\hline
 &  & $V_{14}, V_{15}, V_{16}, V_{17}, V_{18}, V_{19}, V_{20}, V_{21}$ & \\
\hline \hline
$\{a=r^2, r^6\}$ & $\mathbb{Z}_8= \langle r \rangle $ & $\beta_0, \beta_1, \beta_2, \beta_3, \beta_4, \beta_5, \beta_6, \beta_7$ & $\{e, s\}$\\
\hline
&  & $V_{22}, V_{23}, V_{24}, V_{25}, V_{25}, V_{27}, V_{28}, V_{29}$ & \\
\hline \hline
$\{a=r^3, r^5\}$ & $\mathbb{Z}_8 = \langle r \rangle $ & $\gamma_0, \gamma_1, \gamma_2, \gamma_3, \gamma_4, \gamma_5, \gamma_6, \gamma_7$&$\{e, s\}$ \\
\hline
&  & $V_{30}, V_{31}, V_{32}, V_{33}, V_{34}, V_{35}, V_{36}, V_{37}$ & \\
\hline \hline
$\{a=s, sr^2, sr^4, sr^6\}$ & $\mathbb{Z}_4= \langle s \rangle $ & $\tau_0, \tau_1, \tau_2, \tau_3$ & $\{e, r, r^2, r^3\}$\\
\hline
&  & $V_{38}, V_{39}, V_{40}, V_{41}$ & \\
\hline \hline
$\{a=sr, sr^3, sr^5, sr^7\}$ & $\mathbb{Z}_4 = \langle sr \rangle $ & $\sigma_0, \sigma_1, \sigma_2, \sigma_3$&$\{e, r,r^2,r^3\}$ \\
\hline
&  & $V_{42}, V_{43}, V_{44}, V_{45}$ & \\
\hline
\end{tabular}
\end{center}
\label{simpleD8}
\end{table}

In Table \ref{simpleD8}, we use the following notation.  We let $\psi_0$ be the 
trivial representation of $\mathfrak{D}_8$, $\psi_1$ the representation
of $\mathfrak{D}_8$ given by $t\cdot r = t$ and $t\cdot s = -t$, $\psi_2$ the
representation of $\mathfrak{D}_8$ given by $t\cdot r = -t$ and
$t \cdot s = t$, and $\psi_3$  the representation of $\mathfrak{D}_8$
given by $t\cdot r = -t = t\cdot s$.
Letting $\omega =e^{\frac{\pi \i}{4}}$, we have three two-dimensional irreducible
representations for $\mathfrak{D}_{8}$ which given by

\begin{center}
\begin{tabular}{|c|c|c|c|}
\hline
Representation & Generator & $r$ & $s$ \\ \hline
\multirow{2}{*}{$\chi_1$} & $u$ & $\omega^2 u$ & $v$ \\
                          & $v$ & $\omega^6 u$ & $u$ \\ \hline
\multirow{2}{*}{$\chi_2$} & $u$ & $\omega^3 u$ & $-v$ \\
                          & $v$ & $\omega^5 u$ & $u$ \\ \hline
\multirow{2}{*}{$\chi_3$} & $u$ & $\omega u$ & $-v$ \\
                          & $v$ & $\omega^7 u$ & $u$ \\ \hline
\end{tabular}
\end{center}

In addition, $\alpha_i$ has the action $t\cdot(r^l)=\omega^{il}t$,
$\beta_i$ has the action $t\cdot(r^l)=\omega^{il}t$ and
$\gamma_i$ has the action $t\cdot(r^l)=\omega^{il}$ for $0\leq i\leq 7$, $0\leq l\leq 7$.  Finally, $\tau_{j}$ has the action $t\cdot(s^{m})=\i^{jm}t$ and $\sigma_{j}$ has the action $t\cdot((sr)^{m})=\i^{jm}t$ for $0\leq j\leq 3$, $0\leq m\leq 3$.

Table \ref{irrepsD8} describes the action of $D(\mathfrak{D}_8)$ 
on its simple modules.  By Notation \ref{not:modulesOverDouble}, 
these  are given by representations of the centralizers of elements of
$\mathfrak{D}_8$ induced to representations of $\mathfrak{D}_8$.  In 
Table \ref{irrepsD8}, we use the order that appears in Table 
\ref{simpleD8}. 

\begin{table}
\caption{Explicit action of $D(\mathfrak{D}_8)$ on its simple modules}
\begin{minipage}[t]{.45\textwidth}
\strut\vspace*{-\baselineskip}\newline 
\begin{center}
\scalebox{.80}{
\begin{tabular}{|c|c|c|c|c|c|}
\hline
Class & $V_i$ & Gen. & Gr. & $r$ & $s$ \\ \hline\hline
{\multirow{8}{*}{$[e]$}} &
     $V_0$ & $u$ & $e$ & $u$ & $u$ \\ \cline{2-6}
 &  $V_1$ & $u$ & $e$ & $u$ & $-u$ \\ \cline{2-6}
 &  $V_2$ & $u$ & $e$ & $-u$ & $u$ \\ \cline{2-6}
 &  $V_3$ & $u$ & $e$ & $-u$ & $-u$ \\ \cline{2-6}
 &  {\multirow{2}{*}{$V_4$}} 
      & $u$ & $e$ & $\omega^2u$ & $v$ \\
     && $v$ & $e$ & $\omega^6v$ & $u$ \\ \cline{2-6}
 &  {\multirow{2}{*}{$V_5$}} 
      & $u$ & $e$ & $\omega^3 u$ & $-v$ \\
     && $v$ & $e$ & $\omega^5v$ & $u$ \\  \cline{2-6}
 &  {\multirow{2}{*}{$V_6$}} 
      & $u$ & $e$ & $\omega u$ & $-v$ \\
     && $v$ & $e$ & $\omega^7v$ & $u$ \\ \hline\hline
{\multirow{8}{*}{$[r^4]$}} &
     $V_7$ & $u$ & $r^4$ & $u$ & $u$ \\ \cline{2-6}
 &  $V_8$ & $u$ & $r^4$ & $u$ & $-u$ \\ \cline{2-6}
 &  $V_9$ & $u$ & $r^4$ & $-u$ & $u$ \\ \cline{2-6}
 &  $V_{10}$ & $u$ & $r^4$ & $-u$ & $-u$ \\ \cline{2-6}
 &  {\multirow{2}{*}{$V_{11}$}} 
      & $u$ & $r^4$ & $\omega^2u$ & $v$ \\
     && $v$ & $r^4$ & $\omega^6v$ & $u$ \\ \cline{2-6}
 &  {\multirow{2}{*}{$V_{12}$}} 
      & $u$ & $r^4$ & $\omega^3u$ & $-v$ \\
     && $v$ & $r^4$ & $\omega^5v$ & $u$ \\  \cline{2-6}
 &  {\multirow{2}{*}{$V_{13}$}} 
      & $u$ & $r^4$ & $\omega u$ & $-v$ \\
     && $v$ & $r^4$ & $\omega^7v$ & $u$ \\ \hline\hline
{\multirow{16}{*}{$[r]$}} &
     {\multirow{2}{*}{$V_{14}$}}
      & $u$ & $r$   & $u$ & $v$ \\ 
     && $v$ & $r^7$ & $v$ & $u$ \\ \cline{2-6}
 &  {\multirow{2}{*}{$V_{15}$}}
      & $u$ & $r$   & $\omega u$  & $v$ \\ 
     && $v$ & $r^7$ & $\omega^7v$ & $-u$ \\ \cline{2-6}
 &  {\multirow{2}{*}{$V_{16}$}}
      & $u$ & $r$   & $\omega^2u$ & $v$ \\ 
     && $v$ & $r^7$ & $\omega^6v$ & $u$ \\ \cline{2-6}
 &  {\multirow{2}{*}{$V_{17}$}}
      & $u$ & $r$   & $\omega^3u$ & $v$ \\ 
     && $v$ & $r^7$ & $\omega^5v$ & $-u$ \\ \cline{2-6}
&  {\multirow{2}{*}{$V_{18}$}}
      & $u$ & $r$   & $-u$ & $v$ \\ 
     && $v$ & $r^7$ & $-v$ & $u$ \\ \cline{2-6}
 &  {\multirow{2}{*}{$V_{19}$}}
      & $u$ & $r$   & $\omega^5u$ & $v$ \\ 
     && $v$ & $r^7$ & $\omega^3v$ & $-u$ \\ \cline{2-6}
&  {\multirow{2}{*}{$V_{20}$}}
      & $u$ & $r$   & $\omega^6u$ & $v$ \\ 
     && $v$ & $r^7$ & $\omega^2v$ & $u$ \\ \cline{2-6}
 &  {\multirow{2}{*}{$V_{21}$}}
      & $u$ & $r$   & $\omega^7u$ & $v$ \\ 
     && $v$ & $r^7$ & $\omega v$ & $-u$ \\ \hline\hline
{\multirow{16}{*}{$[r^2]$}}&
     {\multirow{2}{*}{$V_{22}$}}
      & $u$ & $r^2$   & $u$ & $v$ \\ 
     && $v$ & $r^6$ & $v$ & $u$ \\ \cline{2-6}
 &  {\multirow{2}{*}{$V_{23}$}}
      & $u$ & $r^2$   & $\omega u$  & $v$ \\ 
     && $v$ & $r^6$ & $\omega^7v$ & $-u$ \\ \cline{2-6}
 &  {\multirow{2}{*}{$V_{24}$}}
      & $u$ & $r^2$   & $\omega^2u$ & $v$ \\ 
     && $v$ & $r^6$ & $\omega^6v$ & $u$ \\ \cline{2-6}
 &  {\multirow{2}{*}{$V_{25}$}}
      & $u$ & $r^2$   & $\omega^3u$ & $v$ \\ 
     && $v$ & $r^6$ & $\omega^5v$ & $-u$ \\ \cline{2-6}
&  {\multirow{2}{*}{$V_{26}$}}
      & $u$ & $r^2$   & $-u$ & $v$ \\ 
     && $v$ & $r^6$ & $-v$ & $u$ \\ \cline{2-6}
 &  {\multirow{2}{*}{$V_{27}$}}
      & $u$ & $r^2$   & $\omega^5u$ & $v$ \\ 
     && $v$ & $r^6$ & $\omega^3v$ & $-u$ \\ \cline{2-6}
&  {\multirow{2}{*}{$V_{28}$}}
      & $u$ & $r^2$   & $\omega^6u$ & $v$ \\ 
     && $v$ & $r^6$ & $\omega^2v$ & $u$ \\ \cline{2-6}
 &  {\multirow{2}{*}{$V_{29}$}}
      & $u$ & $r^2$   & $\omega^7u$ & $v$ \\ 
     && $v$ & $r^6$ & $\omega v$ & $-u$ \\ \hline
\end{tabular}}
\end{center}
\end{minipage}
\begin{minipage}[t]{.45\textwidth}
\strut\vspace*{-\baselineskip}\newline 
\begin{center}
\scalebox{.80}{
\begin{tabular}{|c|c|c|c|c|c|}
\hline
Class & $V_i$ & Gen. & Gr. & $r$ & $s$ \\ \hline\hline
{\multirow{16}{*}{$[r^3]$}} &
     {\multirow{2}{*}{$V_{30}$}}
      & $u$ & $r^3$   & $u$ & $v$ \\ 
     && $v$ & $r^5$ & $v$ & $u$ \\ \cline{2-6}
 &  {\multirow{2}{*}{$V_{31}$}}
      & $u$ & $r^3$   & $\omega u$  & $v$ \\ 
     && $v$ & $r^5$ & $\omega^7 v$ & $-u$ \\ \cline{2-6}
 &  {\multirow{2}{*}{$V_{32}$}}
      & $u$ & $r^3$   & $\omega^2u$ & $v$ \\ 
     && $v$ & $r^5$ & $\omega^6v$ & $u$ \\ \cline{2-6}
 &  {\multirow{2}{*}{$V_{33}$}}
      & $u$ & $r^3$   & $\omega^3u$ & $v$ \\ 
     && $v$ & $r^5$ & $\omega^5v$ & $-u$ \\ \cline{2-6}
&  {\multirow{2}{*}{$V_{34}$}}
      & $u$ & $r^3$   & $-u$ & $v$ \\ 
     && $v$ & $r^5$ & $-v$ & $u$ \\ \cline{2-6}
 &  {\multirow{2}{*}{$V_{35}$}}
      & $u$ & $r^3$   & $\omega^5u$ & $v$ \\ 
     && $v$ & $r^5$ & $\omega^3v$ & $-u$ \\ \cline{2-6}
&  {\multirow{2}{*}{$V_{36}$}}
      & $u$ & $r^3$   & $\omega^6u$ & $v$ \\ 
     && $v$ & $r^5$ & $\omega^2v$ & $u$ \\ \cline{2-6}
 &  {\multirow{2}{*}{$V_{37}$}}
      & $u$ & $r^3$   & $\omega^7u$ & $v$ \\ 
     && $v$ & $r^5$ & $\omega v$ & $-u$ \\ \hline\hline
{\multirow{16}{*}{$[s]$}} &
     {\multirow{4}{*}{$V_{38}$}}
     & $u$ & $s$    & $v$  & $u$  \\
	&& $v$ & $sr^2$   & $p$ & $q$ \\ 
	&& $p$ & $sr^4$    & $q$  & $p$  \\  
    && $q$ & $sr^6$ &  $u$ & $v$ \\ \cline{2-6}
 &  {\multirow{4}{*}{$V_{39}$}}
     & $u$ & $sr$    &$v$  & $\i u$ \\ 
    && $v$ & $sr^2$    & $p$  & $-\i q$ \\ 
	&& $p$ & $sr^4$    & $q$ & $-\i p$ \\  
    && $q$ & $sr^6$ & $-u$ & $-\i v$\\ \cline{2-6}
 &  {\multirow{4}{*}{$V_{40}$}}
     & $u$ & $sr$    &$v$  & $-u$ \\ 
    && $v$ & $sr^2$    & $p$ & $-q$ \\ 
	&& $p$ & $sr^4$    & $q$ & $-p$ \\  
    && $q$ & $sr^6$ & $u$ & $-v$ \\ \cline{2-6}
 &  {\multirow{4}{*}{$V_{41}$}}
     & $u$ & $sr$    & $v$ & $-\i u$ \\ 
    && $v$ & $sr^2$    &$p$  & $\i q$ \\ 
	&& $p$ & $sr^4$    & $q$ & $\i p$ \\  
    && $q$ & $sr^6$ & $-u$ & $\i v$ \\ \hline\hline
{\multirow{16}{*}{$[sr]$}} &
      {\multirow{4}{*}{$V_{42}$}}
     & $u$ & $sr$   & $v$  & $q$ \\ 
    && $v$ & $sr^3$    & $p$ & $p$ \\ 
	&& $p$ & $sr^5$    & $q$ & $v$ \\  
    && $q$ & $sr^7$ & $u$ & $u$ \\ \cline{2-6}
 &  {\multirow{4}{*}{$V_{43}$}}
     & $u$ & $sr$   & $v$ & $-\i q$ \\ 
    && $v$ & $sr^3$    & $p$ & $-\i p$ \\ 
	&& $p$ & $sr^5$    & $q$ & $-\i v$ \\  
    && $q$ & $sr^7$ & $-u$ & $-\i u$ \\ \cline{2-6}
 &  {\multirow{4}{*}{$V_{44}$}}
     & $u$ & $sr$   & $v$ & $-q$ \\ 
    && $v$ & $sr^3$    & $p$ & $-p$ \\ 
	&& $p$ & $sr^5$    & $q$ & $-v$ \\  
    && $q$ & $sr^7$ & $-u$ & $-u$ \\ \cline{2-6}
 &  {\multirow{4}{*}{$V_{45}$}}
     & $u$ & $sr$   & $v$ & $\i q$ \\ 
    && $v$ & $sr^3$    & $p$ & $\i p$ \\ 
	&& $p$ & $sr^5$    & $q$ & $\i v$ \\  
    && $q$ & $sr^7$ & $-u$ & $\i u $ \\ \hline
\end{tabular}}
\end{center}
\end{minipage}
\label{irrepsD8}
\end{table}

In Table \ref{tensorSquareD8} we give the decomposition of $V_a\otimes V_b$, $a,b\in\{32,37,45\}$.
The tensor products have been suppressed in the generators, and the generators are listed in the same order
as they appear in Table \ref{irrepsD8}.

\begin{table}[H]
\caption{Decomposition of $(V_{32} \oplus V_{37} \oplus V_{45})^{\otimes 2}$}
\begin{minipage}[t]{.48\textwidth}
\strut\vspace*{-\baselineskip}\newline 
\begin{center}
\begin{tabular}{|c|c|c|}
\hline
Tensor & $V_i$ & Generators \\ \hline\hline
\multirow{4}{*}{$V_{32} \otimes V_{32}$}
  & $V_0$                     & $y_1y_2 + y_2y_1$ \\ \cline{2-3}
  & $V_1$                     & $y_1y_2 - y_2y_1$ \\ \cline{2-3}
  & \multirow{2}{*}{$V_{26}$} & $y_2^2$ \\
  &                           & $y_1^2$ \\ \hline
\multirow{4}{*}{$V_{37} \otimes V_{37}$}
  & $V_0$                     & $z_1z_2 - z_2z_1$ \\ \cline{2-3}
  & $V_1$                     & $z_1z_2 + z_2z_1$ \\ \cline{2-3}
  & \multirow{2}{*}{$V_{24}$} & $z_2^2$ \\
  &                           & $z_1^2$ \\ \hline
\multirow{4}{*}{$V_{32} \otimes V_{37}$}
  & \multirow{2}{*}{$V_{5}$}  & $y_2z_1$ \\
  &                           & $y_1z_2$ \\ \cline{2-3}
  & \multirow{2}{*}{$V_{29}$} & $y_1z_1$ \\
  &                           & $y_2z_2$ \\ \hline
\multirow{4}{*}{$V_{37} \otimes V_{32}$}
  & \multirow{2}{*}{$V_{5}$}  & $z_1y_2$ \\ 
  &                           & $z_2y_1$ \\ \cline{2-3}
  & \multirow{2}{*}{$V_{29}$} & $z_1y_1$ \\ 
  &                           & $z_2y_2$ \\ \hline
\multirow{8}{*}{$V_{32} \otimes V_{45}$}
  & \multirow{4}{*}{$V_{39}$} & $y_1x_2 + y_2x_3$          \\ 
  &                           & $\i y_1x_3 - \i y_2x_4$    \\ 
  &                           & $-y_1x_4 + y_2x_1$         \\ 
  &                           & $\i y_1x_1 - \i y_2x_2$    \\ \cline{2-3}
  & \multirow{4}{*}{$V_{41}$} & $y_1x_2 - y_2x_3$          \\
  &                           & $\i y_1x_3 + \i y_2x_4$    \\
  &                           & $-y_1x_4 - y_2x_1$         \\
  &                           & $\i y_1x_1 + \i y_2x_2$    \\ \hline
\end{tabular}
\end{center}
\end{minipage}
\begin{minipage}[t]{.48\textwidth}
\strut\vspace*{-\baselineskip}\newline 
\begin{center}
\begin{tabular}{|c|c|c|}
\hline
Tensor & $V_i$ & Generators \\ \hline\hline
\multirow{8}{*}{$V_{45} \otimes V_{32}$}
  & \multirow{4}{*}{$V_{39}$} & $x_2y_2 + x_3y_1$          \\ 
  &                           & $-\i x_3y_2 + \i x_4y_1$   \\ 
  &                           & $-x_4y_2 + x_1y_1$         \\ 
  &                           & $-\i x_1y_2 + \i x_2y_1$   \\ \cline{2-3}
  & \multirow{4}{*}{$V_{41}$} & $x_2y_2 - x_3y_1$          \\ 
  &                           & $-\i x_3y_2 - \i x_4y_1$   \\ 
  &                           & $-x_4y_2 - x_1y_1$         \\
  &                           & $-\i x_1y_2 - \i x_2y_1$   \\ \hline
\multirow{8}{*}{$V_{45} \otimes V_{37}$}
  & \multirow{4}{*}{$V_{38}$} & $x_2z_2- \omega^2x_3z_1$          \\ 
  &                           & $\omega x_3z_2-\omega x_4z_1$    \\ 
  &                           & $\omega^2x_4z_2+x_1z_1$           \\ 
  &                           & $-\omega^3 x_1z_2-\omega^3x_2z_1$ \\ \cline{2-3}
  & \multirow{4}{*}{$V_{40}$} & $x_2z_2+ \omega^2x_3z_1$          \\
  &                           & $\omega x_3z_2+\omega x_4z_1$     \\
  &                           & $\omega^2x_4z_2-x_1z_1$           \\
  &                           & $-\omega^3 x_1z_2+\omega^3x_2z_1$ \\ \hline
\multirow{8}{*}{$V_{37} \otimes V_{45}$}
  & \multirow{4}{*}{$V_{38}$} & $z_1x_2+ \omega^2z_2x_3$          \\ 
  &                           & $-\omega^3z_1x_3+\omega^3 z_2x_4$ \\
  &                           & $-\omega^2z_1x_4+z_2x_1$          \\ 
  &                           & $\omega z_1x_1+\omega z_2x_2$     \\ \cline{2-3}
  & \multirow{4}{*}{$V_{40}$} & $z_1x_2- \omega^2z_2x_3$          \\ 
  &                           & $-\omega^3z_1x_3-\omega^3 z_2x_4$ \\ 
  &                           & $-\omega^2z_1x_4-z_2x_1$          \\ 
  &                           & $\omega z_1x_1-\omega z_2x_2$     \\ \hline
\end{tabular}
\end{center}
\end{minipage}
\begin{center}
\begin{tabular}{|c|c|c|}
\hline
Tensor & $V_i$ & Generators \\ \hline\hline
\multirow{16}{*}{$V_{45} \otimes V_{45}$}
  & $V_0$                     & $x_1x_3 + x_2x_4-x_3x_1-x_4x_2$ \\ \cline{2-3}
  & $V_3$                     & $x_1x_3 - x_2x_4-x_3x_1+x_4x_2$ \\ \cline{2-3}
  & \multirow{2}{*}{$V_4$}    & $x_1x_3 +\omega^6 x_2x_4-\omega^4x_3x_1-\omega^2x_4x_2$  \\ 
  &                           & $-x_4x_2 -\omega^6 x_3x_1+\omega^4x_2x_4+\omega^2x_1x_3$ \\ \cline{2-3}
  & $V_8$                     & $x_1^2 + x_2^2 +x_3^2 + x_4^2$ \\ \cline{2-3}
  & $V_9$                     & $x_1^2 - x_2^2 +x_3^2 - x_4^2$ \\ \cline{2-3}
  & \multirow{2}{*}{$V_{11}$} & $x_1^2 +\omega^6 x_2^2+\omega^4x_3^2+\omega^2x_4^2$   \\
  &                           & $-x_4^2-\omega^6 x_3^2 - \omega^4x_2^2-\omega^2x_1^2$ \\ \cline{2-3}
  & \multirow{2}{*}{$V_{22}$} & $x_1x_4 - x_2x_1 - x_3x_2 - x_4x_3$ \\ 
  &                           & $x_2x_3 + x_1x_2 - x_4x_1 + x_3x_4$ \\ \cline{2-3}
  & \multirow{2}{*}{$V_{24}$} & $x_1x_4 -\omega^6 x_2x_1 - \omega^4x_3x_2 - \omega^2x_4x_3$  \\ 
  &                           & $-x_4x_1 + \omega^6x_3x_4 + \omega^4x_2x_3 + \omega^2x_1x_2$ \\ \cline{2-3}
  & \multirow{2}{*}{$V_{26}$} & $x_1x_4 + x_2x_1 - x_3x_2 + x_4x_3$ \\ 
  &                           & $x_2x_3 - x_1x_2 - x_4x_1 - x_3x_4$ \\ \cline{2-3}
  & \multirow{2}{*}{$V_{28}$} & $x_1x_4 +\omega^6 x_2x_1 - \omega^4x_3x_2 + \omega^2x_4x_3$ \\ 
  &                           & $-x_4x_1 - \omega^6x_3x_4 + \omega^4x_2x_3 - \omega^2x_1x_2$ \\ \hline
\end{tabular}
\end{center}
\label{tensorSquareD8}
\end{table}

We need the following lemma, whose proof is a computation using Table 
\ref{irrepsD8}.

\begin{lemma} \label{DQ8decomps}
One has the following direct sum decompositions:
\begin{eqnarray*}
V_{1} \otimes V_{45}  & \cong & V_{43}, \\
V_{5} \otimes V_{45}  & \cong & V_{42} \oplus V_{44},\\
V_{39} \otimes V_{45} & \cong & V_{14} \oplus V_{16} \oplus V_{18} \oplus V_{20} \oplus
                                V_{30} \oplus V_{32} \oplus V_{34} \oplus V_{36}, \\
V_{40} \otimes V_{45} & \cong & V_{15} \oplus V_{17} \oplus V_{19} \oplus V_{21} \oplus
                                V_{31} \oplus V_{33} \oplus V_{35} \oplus V_{37}, \\                            
V_{42} \otimes V_{45} & \cong & V_{5} \oplus V_{6} \oplus V_{12} \oplus V_{13} \oplus
                                V_{23} \oplus V_{25} \oplus V_{27} \oplus V_{29}. \\                            
\end{eqnarray*}
\end{lemma}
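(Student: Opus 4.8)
The plan is to prove all five isomorphisms by the same direct computation, exploiting two structural features of $D(\mathfrak{D}_8)$-modules. First, since each group element $h\in\mathfrak{D}_8$ is grouplike in $D(\mathfrak{D}_8)$ (the coproduct in Notation \ref{Drinfeld-double} gives $\Delta(h)=h\otimes h$), the actions of $r$ and $s$ on a tensor product $V_a\otimes V_b$ are diagonal, $(m\otimes n)\cdot h=(m\cdot h)\otimes(n\cdot h)$, so the full $\mathfrak{D}_8$-action on the tensor product is immediate from Table \ref{irrepsD8}. Second, by Notation \ref{not:modulesOverDouble} the grading on $V_a\otimes V_b$ is the product grading, $\deg(m\otimes n)=\deg(m)\deg(n)$, and a homogeneous vector of grade $x$ is carried by $h$ to grade $h^{-1}xh$. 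Since a simple module $(a,\chi)$ is determined by its conjugacy class together with the $C_G(a)$-character $\chi$ occurring on its grade-$a$ component, these two pieces of data suffice to identify every summand.

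Concretely, for each product I would: (1) tensor the bases from Table \ref{irrepsD8}; (2) compute the grade of each basis vector as a product in $\mathfrak{D}_8$, using the relation $sr=r^{-1}s$ (equivalently $r^is=sr^{-i}$), so that $(sr^i)(sr^j)=r^{4+j-i}$, which sorts the basis into homogeneous blocks and reveals which conjugacy classes occur; (3) for each occurring class, restrict to the component of a fixed representative grade $a$ (which $C_G(a)$ preserves, since $h\in C_G(a)$ fixes grade $a$) and decompose it over $C_G(a)$ via the diagonal $r,s$-action. For the classes $[e]$ and $[r^4]$ (where $C_G(a)=\mathfrak{D}_8$) this is a character match against $\psi_0,\dots,\chi_3$; for $[r],[r^2],[r^3]$ (where $C_G(a)=\langle r\rangle\cong\mathbb{Z}_8$) it amounts to reading off the eigenvalue $\omega^i$ of $r$ on the grade-$a$ line and recording the corresponding $\alpha_i,\beta_i,\gamma_i$; and for $[s],[sr]$ (where $C_G(a)\cong\mathbb{Z}_4$) it amounts to reading off the eigenvalue $\i^j$ of $s$ or $sr$ and recording $\tau_j$ or $\sigma_j$. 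The multiplicity of $(a,\chi)$ then equals the multiplicity of $\chi$ in this grade-$a$ component.

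As a template, in $V_1\otimes V_{45}$ the four vectors $u'\otimes u,\,u'\otimes v,\,u'\otimes p,\,u'\otimes q$ (with $u'$ spanning $V_1$) all have grade in $[sr]$, and the diagonal action yields $(u'\otimes u)\cdot s=-\i(u'\otimes q)$ and its cyclic analogues, matching $V_{43}$ exactly; the remaining four cases are handled identically, the only new feature being that the larger products break into several homogeneous blocks, each decomposed one class at a time. I would also remark that the resulting multiplicities can be cross-checked against the Verlinde formula and the $S$-matrix of $D(\mathfrak{D}_8)$ described after Notation \ref{not:modulesOverDouble}, giving an independent confirmation of the list.

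The main obstacle is organizational rather than conceptual: for the $16$-dimensional products $V_{39}\otimes V_{45}$, $V_{40}\otimes V_{45}$, and $V_{42}\otimes V_{45}$, one must track the grade arithmetic in $\mathfrak{D}_8$ accurately and, within each homogeneous block, correctly isolate the $\langle r\rangle$-eigenvectors so as to distinguish the eight distinct one-dimensional centralizer characters $\omega^i$ that occur. A slip in the grade bookkeeping, or a bad choice of representative grade, would misidentify which of $V_{14},\dots,V_{37}$ (respectively $V_{23},\dots,V_{29}$) appears, so care in this step is essential.
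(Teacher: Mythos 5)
Your proposal is correct and is essentially the paper's own argument: the authors prove Lemma \ref{DQ8decomps} by exactly this direct computation from Table \ref{irrepsD8}, using the diagonal action of the grouplike elements $r,s$ together with the product grading to sort each tensor product into conjugacy-class blocks and then matching centralizer characters. Your write-up simply makes explicit the bookkeeping (grade arithmetic via $r^is=sr^{-i}$ and $s^2=r^4$, and eigenvalue matching on a fixed representative grade) that the paper leaves to the reader.
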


\begin{proposition}\label{inner-faithful-dq8}
    The representation $V=V_{32}\oplus V_{37}\oplus V_{45}$ is an inner faithful
    representation of $D(\mathfrak{D}_{8})$.
\end{proposition}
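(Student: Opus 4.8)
The plan is to apply the criterion of \cite{FKMW1} recalled just above: since $D(\mathfrak{D}_8)$ is semisimple, $V$ is inner faithful if and only if every one of the $46$ simple modules $V_0,\dots,V_{45}$ occurs as a direct summand of some tensor power $V^{\otimes n}$. I would therefore run the same bookkeeping argument used in the proof of Proposition \ref{inner-faithful-n-2}, tracking which simples have been produced at each successive power. The module $V$ itself supplies $V_{32},V_{37},V_{45}$, and reading off Table \ref{tensorSquareD8} gives that $V^{\otimes 2}$ contains $V_0,V_1,V_3,V_4,V_5,V_8,V_9,V_{11},V_{22},V_{24},V_{26},V_{28},V_{29},V_{38},V_{39},V_{40},V_{41}$.

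Next I would feed these back into Lemma \ref{DQ8decomps}. Since $V_1,V_5,V_{39},V_{40}$ all occur in $V^{\otimes 2}$ while $V_{45}$ occurs in $V$, the lemma shows that $V^{\otimes 3}$ contains $V_{43}$, the summands $V_{42},V_{44}$ of $V_5\otimes V_{45}$, and the sixteen summands of $V_{39}\otimes V_{45}$ and $V_{40}\otimes V_{45}$, which supplies $V_{14},\dots,V_{21}$ together with $V_{30},V_{31},V_{33},V_{34},V_{35},V_{36}$. Because $V_{42}$ now appears in $V^{\otimes 3}$, one further application of the lemma to $V_{42}\otimes V_{45}$ shows that $V^{\otimes 4}$ contains $V_6,V_{12},V_{13},V_{23},V_{25},V_{27}$. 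A careful tally shows that after these steps exactly three simple modules remain unaccounted for.

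The main obstacle is precisely these three leftovers, namely the central one-dimensional modules $V_2=(e,\psi_2)$, $V_7=(r^4,\psi_0)$, and $V_{10}=(r^4,\psi_3)$, which appear in neither Table \ref{tensorSquareD8} nor Lemma \ref{DQ8decomps} and so need a separate computation. The key point is that for the central conjugacy classes $[e]$ and $[r^4]$ (recall $r^4=s^2$ is central), the associated one-dimensional modules fuse by multiplying the group grade and the linear character simultaneously; specializing the comultiplication of Notation \ref{Drinfeld-double} to such grouplike modules gives $(a,\psi)\otimes(b,\psi')\cong(ab,\psi\psi')$ whenever $a,b$ are central. Using the characters of $\mathfrak{D}_8$ together with $r^4\cdot r^4=e$, this yields $V_1\otimes V_3\cong V_2$, $V_1\otimes V_8\cong V_7$, and $V_1\otimes V_9\cong V_{10}$. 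Since $V_1,V_3,V_8,V_9$ all lie in $V^{\otimes 2}$, each of $V_2,V_7,V_{10}$ occurs in $V^{\otimes 4}$, so all $46$ simple modules have been produced and the proof is complete. The only genuinely delicate verification is the fusion rule for the central one-dimensional modules, which I would justify directly from the explicit $\Delta$ of Notation \ref{Drinfeld-double} rather than from the $S$-matrix.
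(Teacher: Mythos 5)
Your proof is correct, and it follows the paper's route through $V^{\otimes 2}$ and $V^{\otimes 3}$ exactly: the same seventeen simples read off Table \ref{tensorSquareD8}, then the same four products $V_1\otimes V_{45}$, $V_5\otimes V_{45}$, $V_{39}\otimes V_{45}$, $V_{40}\otimes V_{45}$ from Lemma \ref{DQ8decomps} to reach degree three. Where you diverge is the last step. The paper disposes of all nine remaining simples by asserting that they appear in $V_{42}\otimes V_{45}$ and $V_{43}\otimes V_{45}$; but Lemma \ref{DQ8decomps} only records the decomposition of $V_{42}\otimes V_{45}$, which accounts for $V_6,V_{12},V_{13},V_{23},V_{25},V_{27}$ and leaves $V_2,V_7,V_{10}$ resting on an unstated computation of $V_{43}\otimes V_{45}$. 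You instead capture these three via the fusion of grouplike simples, $(a,\psi)\otimes(b,\psi')\cong(ab,\psi\psi')$ for central $a,b$, giving $V_1\otimes V_3\cong V_2$, $V_1\otimes V_8\cong V_7$, $V_1\otimes V_9\cong V_{10}$ with all factors already present in $V^{\otimes 2}$. This is a legitimate and easily verified identity (it follows in one line from the coproduct in Notation \ref{Drinfeld-double}, exactly as you indicate), it lands the three stragglers in $V^{\otimes 4}$ just as the paper's argument does, and it has the advantage of being fully self-contained: you avoid invoking a $16$-dimensional tensor decomposition that the paper cites but never actually supplies. The only cost is the extra verification of the fusion rule for one-dimensional modules, which is minor. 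Your count that exactly three simples remain after the $V_{42}\otimes V_{45}$ step is also accurate ($43$ of the $46$ are accounted for at that point).
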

\begin{proof}
The simple modules 
\begin{gather*} V_0,V_1,V_3,V_4, V_5,V_8,V_9,V_{11},V_{22},V_{24},V_{26}, V_{28},V_{29}, V_{38}, V_{39}, V_{40},V_{41}
\end{gather*}
all appear in the decomposition of $V\otimes V$. The simple modules
\begin{gather*} V_{14},V_{15},V_{16},V_{17}, V_{18},V_{19}, V_{20}, V_{21}, V_{30},V_{31}, V_{33},V_{34}, V_{35}, V_{36}, V_{42}, V_{43}, V_{44}
\end{gather*}
all appear in the decomposition of $V_{39}\otimes V_{45}$, $V_{40}\otimes V_{45}$,
$V_{1}\otimes V_{45}$, and $V_{5}\otimes V_{45}$, which appear in the decomposition of $V\otimes V \otimes V$. The remaining simple modules
\begin{gather*} V_{2}, V_{6},V_{7}, V_{10},V_{12},V_{13},V_{23}, V_{25},V_{27}
\end{gather*}
all appear in the decomposition of $V_{42}\otimes V_{45}$ and $V_{43}\otimes V_{45}$,
which appear in the decomposition of $V\otimes V\otimes V \otimes V$.
\end{proof}

We now construct an algebra $B$ on which $D(\mathfrak{D}_8)$ acts
with degree one component equal to $V$.  As in the decomposition tables above, we use 
$x_1,\dots,x_4$ for the basis of $V_{45}$, $y_1,y_2$ for the basis of 
$V_{32}$ and $z_1,z_2$ for the basis of $V_{37}$.  
Let $\alpha, \beta, \gamma \in \{\pm 1\}$, let
$\constb,\constc,\constd,\conste \in \Bbbk^*$,
and let $\omega = e^{\frac{\pi i}{4}}$.   Consider the following set of relations taken
from the decomposition of $V \otimes V$ appearing in Table \ref{relationsQ8}.
The ``Rep" column indicates which representation corresponds to the relations
appearing in the ``Relations" column.  If multiple subscripts are given, then
the representation depends on the choice of the parameter that appears in the
relation column, with the first (respectively, second) integer corresponding to the
parameter $1$ (respectively, $-1$).
\begin{table}[H]
\caption{Relations for an algebra with action of $D(\mathfrak{D}_{8})$}
\begin{minipage}[t]{.45\textwidth}
\strut\vspace*{-\baselineskip}\newline
\scalebox{0.75}{
\begin{tabular}{|c|c|} 
\hline
Rep & Relations \\ \hline
\multirow{2}{*}{$V_{4}$}       & $x_1x_3 + x_3x_1$ \\ \cline{2-2}
                               & $x_2x_4 + x_4x_2$ \\ \hline
\multirow{2}{*}{$V_{22,26}$}   & $x_1x_4 - \alpha x_2x_1 - x_3x_2 - \alpha x_4x_3$ \\ \cline{2-2}
                               & $x_2x_3 + \alpha x_1x_2 - x_4x_1 + \alpha x_3x_4$ \\ \hline
\multirow{2}{*}{$V_{24,28}$}   & $x_1x_4 + \alpha \i x_2 x_1 + x_3x_2 - \alpha\i x_4x_3$ \\ \cline{2-2}
                               & $-x_4x_1 - \alpha \i x_3x_4 - x_2x_3 + \alpha \i x_1x_2$ \\ \hline
$V_{1,0}$ & $y_1y_2 - \beta y_2y_1$  \\ \hline
$V_{0,1}$ & $z_1z_2 - \gamma z_2z_1$ \\ \hline
\multirow{4}{*}{$V_{39}$}      & $(y_1x_2 + y_2x_3) - \constd(x_2y_2 + x_3y_1)$ \\ \cline{2-2}
                               & $(\i y_1x_3 - \i y_2x_4) - \constd(-\i x_3y_2 + \i x_4y_1)$ \\ \cline{2-2}
                               & $(-y_1x_4 + y_2x_1) - \constd(-x_4y_2 + x_1y_1)$ \\ \cline{2-2}
                               & $(\i y_1x_1 - \i y_2x_2) - \constd(-\i x_1y_2 + \i x_2y_1)$ \\ \hline
\multirow{2}{*}{$V_5$}         & $z_1y_2 - \constb y_2z_1$ \\ \cline{2-2}
                               & $z_2y_1 - \constb y_1z_2$ \\ \hline
\end{tabular}}
\end{minipage}
\begin{minipage}[t]{.45\textwidth}
\strut\vspace*{-\baselineskip}\newline
\scalebox{0.75}{\begin{tabular}{|c|c|}
\hline
Rep & Relations \\ \hline
\multirow{4}{*}{$V_{38}$} & $(z_1x_2 + \omega^2 z_2x_3) - \conste(x_2z_2 - \omega^2 x_3z_1)$ \\ \cline{2-2}
                          & $(-\omega^3 z_1x_3 + \omega^3 z_2x_4) - \conste(\omega x_3z_2 - \omega x_4z_1)$ \\ \cline{2-2}
                          & $(-\omega^2z_1x_4 + z_2x_1) - \conste(\omega^2 x_4z_2 + x_1z_1)$ \\ \cline{2-2}
                          & $(\omega z_1x_1 + \omega z_2x_2) - \conste(-\omega^3x_1z_2 - \omega^3x_2z_1)$ \\ \hline
\multirow{4}{*}{$V_{40}$} & $(z_1x_2 - \omega^2 z_2x_3) - \conste(x_2z_2 + \omega^2 x_3z_1)$ \\ \cline{2-2}
                          & $(-\omega^3 z_1x_3 - \omega^3 z_2x_4) - \conste(\omega x_3z_2 + \omega x_4z_1)$ \\ \cline{2-2}
                          & $(-\omega^2z_1x_4 - z_2x_1) - \conste(\omega^2 x_4z_2 - x_1z_1)$ \\ \cline{2-2}
                          & $(\omega z_1x_1 - \omega z_2x_2) - \conste(-\omega^3x_1z_2 + \omega^3x_2z_1)$ \\ \hline
\multirow{4}{*}{$V_{41}$} & $(y_1x_2 - y_2x_3) - \constd(x_2y_2 - x_3y_1)$ \\ \cline{2-2}
                          & $(\i y_1x_3 + \i y_2x_4) - \constd(-\i x_3y_2 - \i x_4y_1)$ \\ \cline{2-2}
                          & $(-y_1x_4 - y_2x_1) - \constd(-x_4y_2 - x_1y_1)$ \\ \cline{2-2}
                          & $(\i y_1x_1 + \i y_2x_2) - \constd(-\i x_1y_2 - \i x_2y_1)$ \\ \hline
\multirow{2}{*}{$V_{29}$} & $z_1y_1 - \constc y_1z_1$ \\ \cline{2-2}
                          & $z_2y_2 - \constc y_2z_2$ \\ \hline
\end{tabular}}
\end{minipage}
\label{relationsQ8}
\end{table}

One sees that each of these relations correspond to a linear combination of generators
of irreducible representations which appear in $V \otimes V$.  Any linear combinations
used follow the method in Lemma \ref{MinsideMM} to produce a copy of a subrepresentation
$W$ inside of $W \oplus W$.  For example, this is used on all of the relations that
use the parameters $\constb,\constc,\constd$ and $\conste$.  Specifically, since $V_{32} \otimes V_{45} \cong
V_{45} \otimes V_{32} \cong V_{39} \oplus V_{41}$, there are two copies of $V_{39}$
and $V_{41}$ in $V \otimes V$.  Our table identifies an isomorphism between these two
copies of each of $V_{39}$ and $V_{41}$, and we use the $\constd$ parameter to choose a particular
copy of $V_{39}$ inside of $V_{39} \oplus V_{39}$ and similarly for $V_{41}$.


\begin{theorem}\label{i-double-ore}
Let $V = V_{32} \oplus V_{37} \oplus V_{45}$ be the representation of
$D(\mathfrak{D}_8)$ given by the table on the left below:

\begin{minipage}[t]{.45\textwidth}
\strut\vspace*{-\baselineskip}\newline 
\begin{center}
\scalebox{.75}{
\begin{tabular}{|c|c|c|c|c|} \hline
$V_i$ & Gen. & Gr. & $r$ & $s$ \\ \hline\hline
{\multirow{2}{*}{$V_{32}$}}
      & $y_1$ & $r^3$   & $\omega^2y_1$ & $y_2$ \\ 
      & $y_2$ & $r^5$   & $\omega^6y_2$ & $y_1$ \\ \hline
{\multirow{2}{*}{$V_{37}$}}
      & $z_1$ & $r^3$   & $\omega^7z_1$ & $z_2$ \\ 
      & $z_2$ & $r^5$   & $\omega  z_2$ & $-z_1$ \\ \hline
{\multirow{4}{*}{$V_{45}$}}
     & $x_1$ & $sr$      & $x_2$  & $\i x_4$ \\ 
     & $x_2$ & $sr^3$    & $x_3$  & $\i x_3$ \\ 
     & $x_3$ & $sr^5$    & $x_4$  & $\i x_2$ \\  
     & $x_4$ & $sr^7$    & $-x_1$ & $\i x_1$ \\ \hline
\end{tabular}}
\end{center}
\end{minipage}
\begin{minipage}[t]{.45\textwidth}
\strut\vspace*{-\baselineskip}\newline
\def\arraystretch{1.2}
\begin{center}
\scalebox{.75}{
\begin{tabular}{|cc|} 
\hline
\multicolumn{2}{|c|}{Relations} \\ \hline \hline
    $y_1y_2 - \beta y_2y_1$      & $z_1z_2 - \gamma z_2z_1$     \\
    $y_1x_1 + \constd x_1y_2$    & $y_2x_1 - \constd x_1y_1$    \\ 
    $y_1x_2 - \constd x_2y_2$    & $y_2x_2 + \constd x_2y_1$    \\
    $y_1x_3 + \constd x_3y_2$    & $y_2x_3 - \constd x_3y_1$    \\
    $y_1x_4 - \constd x_4y_2$    & $y_2x_4 + \constd x_4y_1$    \\
    $z_1x_1 + \conste \i x_1z_2$ & $z_2x_1 - \conste x_1z_1$    \\
    $z_1x_2 - \conste x_2z_2$    & $z_2x_2 + \conste \i x_2z_1$ \\
    $z_1x_3 - \conste \i x_3z_2$ & $z_2x_3 + \conste x_3z_1$    \\
    $z_1x_4 + \conste x_4z_2$    & $z_2x_4 - \conste \i x_4z_1$ \\
    $z_1y_1 - \constc y_1z_1$    & $z_2y_1 - \constb y_1z_2$    \\
    $z_1y_2 - \constb y_2z_1$    & $z_2y_2 - \constc y_2z_2$    \\
    $x_1x_3 + x_3x_1$            & $x_2x_4 + x_4x_2$            \\
\multicolumn{2}{|c|}{$x_2x_1 - \frac{\alpha}{\sqrt{2}}(\omega^3 x_3x_2 + \omega x_1x_4)$} \\
\multicolumn{2}{|c|}{$x_4x_1 - \frac{\alpha}{\sqrt{2}}(\omega x_1x_2 + \omega^7 x_3x_4)$} \\
\multicolumn{2}{|c|}{$x_2x_3 - \frac{\alpha}{\sqrt{2}}(\omega^3 x_1x_2 + \omega^5 x_3x_4)$} \\
\multicolumn{2}{|c|}{$x_4x_3 - \frac{\alpha}{\sqrt{2}}(\omega^5 x_3x_2 + \omega^7 x_1x_4)$} \\ \hline
\end{tabular}}
\end{center}
\def\arraystretch{1}
\end{minipage}

Let $B$ be the quotient algebra of $\Bbbk\langle x_1, x_2, x_3, x_4, y_1, y_2, z_1, z_2 \rangle$ given by the 
relations from the table on the right, or equivalently, by those given in Table \ref{relationsQ8}.
Then for any nonzero $\constb,\constc,\constd,\conste \in \Bbbk$,
and for any $\alpha,\beta,\gamma \in \{\pm 1\}$,
the algebra $B$ is an Artin-Schelter regular algebra of dimension eight upon which $D(\mathfrak{D}_8)$
acts inner faithfully.  More precisely, this algebra may be expressed as an iterated double Ore extension of a 
$(-1)$-skew polynomial ring in two variables.
\end{theorem}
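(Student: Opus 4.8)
The plan is to realize $B$ as an iterated double Ore extension of the $(-1)$-skew polynomial ring $R_0 = \Bbbk_{-1}[x_1,x_3]$ (with relation $x_1x_3 = -x_3x_1$), and then push AS regularity up the tower using \cite{ZZ}. Reading the relations in the table on the right, the correct order in which to adjoin the generators is $x_2,x_4$, then $y_1,y_2$, then $z_1,z_2$: each new pair, when moved to the right past the generators below it, returns a linear combination of that same pair with coefficients in the subalgebra already constructed. Concretely I would set
$$R_1 = (R_0)_{-1}[x_2,x_4;\sigma^x],\quad R_2 = (R_1)_\beta[y_1,y_2;\sigma^y],\quad R_3 = (R_2)_\gamma[z_1,z_2;\sigma^z],$$
with $p$ equal to $-1,\beta,\gamma$ respectively. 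The maps are extracted directly from the relations: for example $\sigma^x(x_1) = \frac{\alpha}{\sqrt{2}}\left(\begin{smallmatrix}\omega^3 x_3 & \omega x_1 \\ \omega x_1 & \omega^7 x_3\end{smallmatrix}\right)$ encodes $x_2x_1 = \frac{\alpha}{\sqrt{2}}(\omega^3 x_3x_2 + \omega x_1x_4)$ and $x_4x_1 = \frac{\alpha}{\sqrt{2}}(\omega x_1x_2 + \omega^7 x_3x_4)$, while $\sigma^y$ is skew-diagonal on the $x$-generators and $\sigma^z$ is skew-diagonal on the $x$-generators and diagonal on the $y$-generators.

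For each step I would verify the hypotheses of \cite[Proposition 1.11]{ZZ}, namely that $\sigma$ is an invertible $\Bbbk$-algebra map $A \to M_2(A)$ and that the identities \eqref{doubleOreConditions} hold on an algebra-generating set of the base. The homomorphism property reduces to checking that $\sigma$ sends each defining relation of the base to $0$ in $M_2(A)$. For the top two extensions this is immediate: $\sigma^y$ is skew-diagonal, so Remark \ref{diagDOData} reduces \eqref{doubleOreConditions} to the commuting of $\sigma^y_{12}$ and $\sigma^y_{21}$, which is clear since both act diagonally in the basis $x_1,x_2,x_3,x_4$; and for $\sigma^z$ the off-diagonal blocks $\sigma^z_{12},\sigma^z_{21}$ are supported on the $x$-generators while the diagonal blocks $\sigma^z_{11},\sigma^z_{22}$ are supported on the $y$-generators, so each composite in \eqref{doubleOreConditions} either vanishes or reduces to a scalar identity on a single generator. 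Invertibility of $\sigma^y$ and $\sigma^z$ is clear, as the relevant scalar matrices are nonsingular for nonzero $\constb,\constc,\constd,\conste$.

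The genuine obstacle is the bottom extension $R_1 = (R_0)_{-1}[x_2,x_4;\sigma^x]$, since $\sigma^x(x_1)$ and $\sigma^x(x_3)$ are full $2\times 2$ matrices to which Remark \ref{diagDOData} does not apply, forcing a direct check of all three identities in \eqref{doubleOreConditions} on the generators $x_1,x_3$. The key computation is the homomorphism property $\sigma^x(x_1)\sigma^x(x_3) + \sigma^x(x_3)\sigma^x(x_1) = 0$: in each product the diagonal entries vanish by the base relation $x_1x_3 = -x_3x_1$, while—using $\omega^8 = 1$ and $\omega^4 = -1$—the off-diagonal entries of the two products are negatives of one another. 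Invertibility of $\sigma^x$ then follows by checking that its associated block map on $(R_0)_1^{\oplus 2}$ is nonsingular (its matrix is a nonzero multiple of a permutation of powers of $\omega$, with $\alpha^2 = 1$). Once these hold, \cite{ZZ} gives that $R_1$ is AS regular of dimension $4$, and iterating twice more yields that $R_3 \cong B$ is AS regular of dimension $8$.

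It remains to identify $B$ with $R_3$ and to record inner faithfulness. The relations in the theorem's table and those in Table \ref{relationsQ8} span the same subspace of $V\otimes V$: the $y$-$x$, $z$-$x$ and $z$-$y$ relations arise from the paired $V_{39},V_{41},V_{38},V_{40},V_5,V_{29}$ generators by taking sums and differences (exactly as in the $n=2$ discussion following Table \ref{relationsQ4}), and the six $x$-$x$ relations are a change of basis—involving the eighth roots of unity $\omega$—of the span of the $V_4,V_{22,26},V_{24,28}$ generators; both are routine linear algebra. In particular the relation space is a $D(\mathfrak{D}_8)$-submodule of $V\otimes V$, so $B$ is a $D(\mathfrak{D}_8)$-module algebra with $B_1 = V$. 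Finally, since $V = V_{32}\oplus V_{37}\oplus V_{45}$ is inner faithful by Proposition \ref{inner-faithful-dq8}, any Hopf ideal annihilating $B$ annihilates $B_1 = V$ and is therefore zero, so $D(\mathfrak{D}_8)$ acts inner faithfully on $B$.
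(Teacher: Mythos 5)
Your proposal is correct and follows essentially the same route as the paper: the same tower $\Bbbk_{-1}[x_1,x_3] \subset C_2 \subset C_3 \subset B$ with the same maps $\sigma^{(1)},\sigma^{(2)},\sigma^{(3)}$, verification via \cite[Proposition 1.11]{ZZ} and Remark \ref{diagDOData}, identification of the relation space with the $D(\mathfrak{D}_8)$-submodule of $V\otimes V$ from Table \ref{relationsQ8}, and inner faithfulness from Proposition \ref{inner-faithful-dq8}. Your explicit check that $\sigma^{(1)}(x_1)\sigma^{(1)}(x_3)+\sigma^{(1)}(x_3)\sigma^{(1)}(x_1)=0$ simply fills in a computation the paper leaves to the reader.
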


\begin{proof}

We first note that the relations given in the theorem statement are linear combinations
of those appearing in Table \ref{relationsQ8}, as was done in Theorem \ref{double-ore}.
It follows that the span of these relations is a $D(\mathfrak{D}_8)$-module, and hence
$D(\mathfrak{D}_8)$ acts on $B$.

We now describe $B$ as an iterated double Ore extension.
First, note that the last six equations listed define a double Ore extension
of a $(-1)$-skew polynomial ring in two variables.
Indeed, using Notation  \ref{doubleOreNotation}, 
one may take $C_1 = \Bbbk_{-1}[x_1,x_3]$ and let $\sigma^{(1)}$ be given by:

\begin{eqnarray*}
\sigma^{(1)}(x_1) =
\frac{\alpha}{\sqrt{2}}
\begin{pmatrix} 
\omega^3 x_3 & \omega x_1 \\
\omega x_1 & \omega^7 x_3
\end{pmatrix}
\qquad
\sigma^{(1)}(x_3) =
\frac{\alpha}{\sqrt{2}}
\begin{pmatrix} 
\omega^3 x_1 & \omega^5 x_3 \\
\omega^5 x_3 & \omega^7 x_1
\end{pmatrix}.
\end{eqnarray*}
One can check directly that $\sigma^{(1)}$ is an algebra map, and also that
$\sigma^{(1)}$ is invertible in the sense
of \cite[Definition 1.8]{ZZ}.

Let $C_2$ be this double Ore extension.  Next, we describe the algebra defined by those
relations involving the $x$s and $y$s as a double Ore extension of $C_2$.  Indeed,
we may use the matrices $\sigma^{(2)}$ to define this double Ore extension:
\begin{eqnarray*}
\sigma^{(2)}(x_1) =
\constd\begin{pmatrix} 
0   & -x_1 \\
x_1 & 0
\end{pmatrix}
\qquad
\sigma^{(2)}(x_2) =
\constd\begin{pmatrix} 
0   & x_2 \\
-x_2 & 0
\end{pmatrix}
\\
\sigma^{(2)}(x_3) =
\constd\begin{pmatrix} 
0   & -x_3 \\
x_3 & 0
\end{pmatrix}
\qquad
\sigma^{(2)}(x_4) =
\constd\begin{pmatrix} 
0   & x_4 \\
-x_4 & 0
\end{pmatrix}
.
\end{eqnarray*}
Here, we may appeal to Remark \ref{diagDOData} to see that $\sigma^{(2)}$
is an algebra map, and one may verify directly that
$\sigma^{(2)}$ is invertible.
Let $C_3$ be the double Ore extension of $C_2$ 
obtained by adding $y_1$ and $y_2$
via the map $\sigma^{(2)}$.  Finally, we adjoin 
$z_1$ and $z_2$ using
a double Ore extension of $C_3$ according to the map $\sigma^{(3)}$ given below:
\begin{eqnarray*}
\sigma^{(3)}(x_1) =
\conste\begin{pmatrix} 
0   & -\i x_1 \\
x_1 & 0
\end{pmatrix}
\qquad
\sigma^{(3)}(x_2) =
\conste\begin{pmatrix} 
0   & x_2 \\
-\i x_2 & 0
\end{pmatrix}
\\
\sigma^{(3)}(x_3) =
\conste\begin{pmatrix} 
0   & \i x_3 \\
-x_3 & 0
\end{pmatrix}
\qquad
\sigma^{(3)}(x_4) =
\conste\begin{pmatrix} 
0   & -x_4 \\
\i x_4 & 0
\end{pmatrix}
\\
\sigma^{(3)}(y_1) =
\begin{pmatrix} 
\constc y_1   & 0 \\
0 & \constb y_1
\end{pmatrix}
\qquad
\sigma^{(3)}(y_2) =
\begin{pmatrix} 
\constb y_2 & 0 \\
0              & \constc y_2
\end{pmatrix}
.
\end{eqnarray*}
One may check once again that $\sigma^{(3)}$ is 
invertible.  The double Ore extension created from this data
is the algebra $B$ given in the theorem.
\end{proof}

Recall that the actions of $r$ and $s$ on $V = \Span\{x_1,x_2,x_3,x_4,y_1,y_2,z_1,z_2\}$ are given by the matrices
\begin{equation}\label{r-s-action2}
\begin{pmatrix}
0  & 1 & 0 & 0 & 0  & 0   & 0        & 0 \\
0  & 0 & 1 & 0 & 0  & 0   & 0        & 0 \\
0  & 0 & 0 & 1 & 0  & 0   & 0        & 0 \\
-1 & 0 & 0 & 0 & 0  & 0   & 0        & 0 \\
0  & 0 & 0 & 0 & \i & 0   & 0        & 0 \\
0  & 0 & 0 & 0 & 0  & -\i & 0        & 0 \\
0  & 0 & 0 & 0 & 0  & 0   & \omega^7 & 0 \\
0  & 0 & 0 & 0 & 0  & 0   & 0        & \omega \\
\end{pmatrix}
\text{ and }
\begin{pmatrix}
0  & 0  & 0  & \i & 0 & 0 & 0  & 0 \\
0  & 0  & \i & 0  & 0 & 0 & 0  & 0 \\
0  & \i & 0  & 0  & 0 & 0 & 0  & 0 \\
\i & 0  & 0  & 0  & 0 & 0 & 0  & 0 \\
0  & 0  & 0  & 0  & 0 & 1 & 0  & 0 \\
0  & 0  & 0  & 0  & 1 & 0 & 0  & 0 \\
0  & 0  & 0  & 0  & 0 & 0 & 0  & 1 \\
0  & 0  & 0  & 0  & 0 & 0 & -1 & 0 \\
\end{pmatrix},
\end{equation}
respectively.  We seek to compute the homological determinant of the action
of these elements on $B$.  To this end, a computation shows that the twisted superpotential begins as:
$$\mathbf{w}_B = x_1x_3x_2x_4y_1y_2z_1z_2 - x_4x_2x_1x_3y_1y_2z_1z_2 - \beta\gamma x_4x_2x_3x_1y_2y_1z_2z_1 + \cdots.$$
One has the following proposition, whose proof follows from a computation similar to the one
preceding Proposition \ref{prop:hdetQ4}:
\begin{proposition} \label{prop:hdetQ8}
Let $D(\mathfrak{D}_8)$ act on the algebra $B$ given by Theorem \ref{i-double-ore}.
Then the group grade component of the homological determinant
of this action is trivial, and one has that
$$\hdet_B(r) = 1 \qquad\text{and}\qquad \hdet_B(s) = \beta\gamma.$$
In particular, this action has trivial homological determinant if and only
if $\beta = \gamma$.
\end{proposition}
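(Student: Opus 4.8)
The plan is to reuse the superpotential method applied just before Proposition~\ref{prop:hdetQ4}, now for the eight-dimensional algebra $B$ of Theorem~\ref{i-double-ore}. First I would note that, as an iterated double Ore extension of the Koszul AS-regular algebra $\Bbbk_{-1}[x_1,x_3]$, the algebra $B$ is again Koszul and AS-regular, of global dimension $8$; so by \cite{DV} it is the derivation-quotient algebra of a twisted superpotential $\mathbf{w}_B\in V^{\otimes 8}$, which I would compute exactly as in the $n=2$ case from the Frobenius pairing on $B^!$. A Gr\"obner basis calculation shows that $x_1^*x_3^*x_2^*x_4^*y_1^*y_2^*z_1^*z_2^*$ is nonzero in $B_8^!$, and reading off the pairing against this basis vector yields the opening terms of $\mathbf{w}_B$ recorded above. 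Using the group grades from the table in Theorem~\ref{i-double-ore}, I would check that every nonzero term of $\mathbf{w}_B$ has identity group grade, which gives at once that the group-grade component of $\hdet_B$ is trivial. By \cite[Theorem~3.2]{C1} the line $\Bbbk\mathbf{w}_B$ is a one-dimensional $D(\mathfrak{D}_8)$-representation whose character is $\hdet_B$, so it remains only to find the scalars by which $r$ and $s$ act on $\mathbf{w}_B$.

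To find those scalars I would use that $r$ and $s$ act on $V^{\otimes 8}$ diagonally (both $r$ and $s$ are group-like in $D(\mathfrak{D}_8)$) through the monomial matrices \eqref{r-s-action2}. Because these matrices are monomial, each carries every degree-$8$ monomial to a scalar multiple of a single monomial, so the coefficient of the reference monomial $M = x_1x_3x_2x_4y_1y_2z_1z_2$ in $\mathbf{w}_B\cdot g$ is controlled by the \emph{unique} term of $\mathbf{w}_B$ whose monomial is the $g$-preimage of $M$. For $g=r$ one has $x_4\mapsto-x_1$, $x_2\mapsto x_3$, $x_1\mapsto x_2$, $x_3\mapsto x_4$, together with $y_1y_2\mapsto(\i)(-\i)\,y_1y_2 = y_1y_2$ and $z_1z_2\mapsto\omega^8 z_1z_2 = z_1z_2$; hence the second displayed term $-x_4x_2x_1x_3y_1y_2z_1z_2$ is sent to $+M$ and contributes coefficient $1$, giving $\hdet_B(r)=1$. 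For $g=s$ one has $x_4\mapsto\i x_1$, $x_2\mapsto\i x_3$, $x_3\mapsto\i x_2$, $x_1\mapsto\i x_4$, together with $y_2y_1\mapsto y_1y_2$ and $z_2z_1\mapsto -z_1z_2$; hence the third displayed term $-\beta\gamma\,x_4x_2x_3x_1y_2y_1z_2z_1$ is sent to $\beta\gamma\,M$ (using $\i^4=1$) and contributes coefficient $\beta\gamma$, giving $\hdet_B(s)=\beta\gamma$.

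Finally I would assemble the conclusion: the one-dimensional representation $\hdet_B$ is the counit precisely when its group grade is trivial and $\hdet_B(r)=\hdet_B(s)=1$. The first two conditions hold unconditionally by the above, so the action has trivial homological determinant if and only if $\hdet_B(s)=\beta\gamma=1$, that is, if and only if $\beta=\gamma$ (recall $\beta,\gamma\in\{\pm1\}$).

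The step I expect to be the main obstacle is not the action computation---which, being monomial, is purely mechanical---but pinning down the exact opening terms of $\mathbf{w}_B$ together with their coefficients. Concretely, one must justify through the $B^!$ Frobenius structure that the $r$- and $s$-preimages of $M$ are precisely the two displayed terms, carrying coefficients $-1$ and $-\beta\gamma$ respectively, and that no further term of $\mathbf{w}_B$ repeats either of those monomials. This is the analogue of the ``nonzero in $B_6^!$'' verification used in the $n=2$ case, and it is where the Gr\"obner/derivation-quotient bookkeeping is needed.
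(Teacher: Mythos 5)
Your proposal is correct and follows essentially the same route as the paper: the paper's proof of Proposition \ref{prop:hdetQ8} is exactly "a computation similar to the one preceding Proposition \ref{prop:hdetQ4}," i.e., apply \cite[Theorem 3.2]{C1} to the displayed opening terms of $\mathbf{w}_B$, use that the matrices \eqref{r-s-action2} are monomial so that only the unique $r$- (resp.\ $s$-) preimage of the reference monomial $x_1x_3x_2x_4y_1y_2z_1z_2$ contributes, and read off $\hdet_B(r)=1$ and $\hdet_B(s)=\beta\gamma$. Your scalar bookkeeping (the $-1$ from $x_4\mapsto -x_1$, the cancellation $\omega^7\omega=1$ in the $z$'s under $r$, and the $\i^4\cdot(-1)$ under $s$) checks out against the table in Theorem \ref{i-double-ore}, and your identification of the remaining gap --- justifying the displayed coefficients of $\mathbf{w}_B$ via the Frobenius structure of $B^!$ --- is precisely the part the paper also leaves to a computation.
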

One may also check that the Nakayama automorphism of this algebra is:
$$\mu =
\begin{pmatrix}
\beta\gamma \constd^2\conste^2 I_4 & 0 & 0 \\
 0 & \beta\constb^{-1}\constc^{-1}\constd^4 I_2 & 0 \\
 0 & 0 & -\gamma\constb^{-1}\constc^{-1}\conste^{-4} I_2
\end{pmatrix}$$




\end{section}

\begin{section}{The symmetric group \texorpdfstring{$S_3$}{S3}}
\label{S3}
In this section, we show that any connected graded algebra on which
$D(S_3)$ acts inner faithfully must have at least three generators.
Further, we show that there is no AS regular algebra (indeed, no 
domain) $B$ with three generators and quadratic relations on which
$D(S_3)$ acts inner faithfully.  


The Hopf algebra $D(S_3)$ has 8 simple modules,
which we denote $V_i$ for $i = 0, \dots, 7$.  Recall
that each simple module of $D(S_3)$ is given as in
Notation \ref{not:modulesOverDouble}.
The conjugacy classes (with choice of representative $a$)  centralizers of 
$a$, and irreducible characters are given in Table 
\ref{DS3simples}, which uses the following notation.

We use $\zeta$ to denote a primitive third root of 
unity, $\psi_0$ and $\psi_1$ denote the trivial 
representation and sign representation, respectively,
and $\chi$ has $S_3$-action given by $u.r = \zeta u$ and $v.r = \zeta^2 v$,
$u.s = v$ and $v.s = u$.  Finally, the representations of the (cyclic)
centralizers are given by $\alpha_i: t.r=\zeta^it$ for $i=0,1,2$ and
$\beta_j: t.s=(-1)^jt$ for $j=0,1$. 

\begin{table}[H]
\caption{The simple $D(S_3)$ modules}
\begin{center}
\scalebox{.8}{
 \begin{tabular}{|c|c|c|c|}
\hline
  Conj class & $C_G(a)$ & Rep $C_G(a)$& transveral\\
\hline
\hline
$\{a=e\}$ & $S_{3}$ & $\psi_0, \psi_1, \chi$ & $\{e\}$\\
\hline
 &  & $V_0, V_1, V_2$& \\
\hline
$\{a=r,r^2\}$ & $\mathbb{Z}_{3} = \langle r \rangle$  & $\alpha_0, \alpha_1, \alpha_2$& $\{e, s\}$\\
\hline
 &  & $V_3, V_4, V_5$ & \\
\hline
$\{a=s,sr,sr^2\}$ & $\mathbb{Z}_2 = \langle s \rangle $ & $\beta_0, \beta_1$ & $\{e, r, r^2\}$ \\
\hline
 &  & $V_{6}, V_{7}$ & \\
\hline
\end{tabular}}
\end{center}
\label{DS3simples}
\end{table}

Table \ref{dihedral-n-2 table:} gives a summary of 
the action of the generators of $D(S_3)$ on its 
simple modules $V_i, i=0,1,\ldots,7.$
\begin{table}[H]
\caption{Explicit action of $D(S_3)$ on its simple modules}
\begin{center}
\scalebox{.8}{
\begin{tabular}{|c|c|c|c|c|c|c|} \hline
Conj Class & $C_G(a)$ & $V_i$ & Generator & Grade & $r$ & $s$ \\ \hline\hline
{\multirow{4}{*}{$[a]$}} & {\multirow{4}{*}{$G$}} &
     $V_0$ & $u$ & $e$ & $u$ & $u$ \\ \cline{3-7}
 &&  $V_1$ & $u$ & $e$ & $u$ & $-u$ \\ \cline{3-7}
 &&  {\multirow{2}{*}{$V_2$}} 
       & $u$ & $e$ & $\zeta u$ & $v$ \\
     &&& $v$ & $e$ & $\zeta^2 v$ & $u$ \\ \hline\hline
{\multirow{6}{*}{$[r]$}} & {\multirow{6}{*}{$\mathbb{Z}_3$}} &
   {\multirow{2}{*}{$V_3$}} 
       & $u$ & $r$ & $u$ & $v$ \\
     &&& $v$ & $r^2$ & $v$ & $u$ \\ \cline{3-7}
  && {\multirow{2}{*}{$V_4$}} 
       & $u$ & $r$ & $\zeta u$ & $v$ \\
     &&& $v$ & $r^2$ & $\zeta^2 v$ & $u$ \\ \cline{3-7} 
 && {\multirow{2}{*}{$V_5$}} 
       & $u$ & $r$ & $\zeta^2 u$ & $v$ \\
     &&& $v$ & $r^2$ & $\zeta v$ & $u$ \\ \hline\hline
{\multirow{6}{*}{$[s]$}} & {\multirow{6}{*}{$\mathbb{Z}_2$}} &
   {\multirow{3}{*}{$V_6$}} 
       & $u$ & $s$ & $p$ & $u$ \\
     &&& $v$ & $sr$ & $u$ & $p$ \\  
    &&& $p$ & $sr^2$ & $v$ & $v$ \\ \cline{3-7} 
 && {\multirow{3}{*}{$V_7$}} 
       & $u$ & $s$ & $p$ & $-u$ \\
     &&& $v$ & $sr$ & $u$ & $-p$ \\ 
       &&& $p$ & $sr^2$ & $v$ & $-v$ \\ \hline\hline
\end{tabular}}
\end{center}
\label{dihedral-n-2 table:}
\end{table}
 To show that $V_6$ is an inner faithful $D(S_3)$-module, we 
 will decompose $V_6\otimes V_6$, $V_6\otimes V_2$ and 
 $V_2\otimes V_2$. We let $V_{6} = \Span_k\{x_1,x_2,x_3\}$ and 
 $V_{2} = \Span_k\{y_1,y_2\}$.
The explicit decomposition of $V_6 \otimes V_6$ is given in Table \ref{V_{6}X V_{6}}.
\begin{table}[H]
\caption{The decomposition of $V_6 \otimes V_6$}
\begin{center}
\begin{tabular}{|c|c|c|c|c|c|c|}
\hline
Tensor & Simple & grade & gens  &$r$-action & $s$-action \\ \hline\hline
\multirow{9}{*}{$V_{6} \otimes V_{6}$}
& \multirow{1}{*}{$V_{0}$} & $e$ & $u= x_1^2+x_2^2+x_3^2$ & $u$  & $u$ \\ \cline{2-6}
& \multirow{2}{*}{$V_{2}$} & $e$ & $u= x_1^2+\zeta x_2^2+\zeta^2 x_3^2$ & $\zeta u$  & $v$ \\ \cline{3-6}
&  & $e$  & $v=  x_1^2+\zeta^2x_2^2+\zeta x_3^2$ & $\zeta^2v$  & $u$ \\ \cline{2-6}
& \multirow{2}{*}{$V_{3}$} 
& $r$       & $u= x_1x_2+x_3x_1+ x_2x_3$ & $u$  & $v$ \\ \cline{3-6}
&  & $r^2$  & $v= x_1x_3+x_2x_1+x_3x_2$ & $v$  & $u$ \\ \cline{2-6}
& \multirow{2}{*}{$V_{4}$} 
& $r$       & $u= x_1x_2+\zeta^2 x_3x_1+ \zeta x_2x_3$ & $\zeta u$  & $v$ \\ \cline{3-6}
&  & $r^2$  & $v= x_1x_3+\zeta^2 x_2x_1+ \zeta x_3x_2$ & $\zeta^2 v$  & $u$ \\ \cline{2-6}
& \multirow{2}{*}{$V_{5}$} 
& $r$       & $u= x_1x_2+\zeta x_3x_1+ \zeta^2 x_2x_3$ & $\zeta^2 u$  & $v$ \\ \cline{3-6}
&  & $r^2$  & $v= x_1x_3+\zeta x_2x_1+ \zeta^2 x_3x_2$ & $\zeta v$  & $u$ \\ \hline
\end{tabular}
\label{V_{6}X V_{6}}
\end{center}
\end{table}
The decompositions of the remaining tensor products
are given in the following lemma, which can be proved by a direct calculation using Table \ref{dihedral-n-2 table:}.
\begin{lemma}
Let $i = 6,7$.  Then one has the following decompositions:
\begin{eqnarray*}
V_i \otimes V_i & \cong & V_0 \oplus V_2 \oplus V_3 \oplus V_4 \oplus V_5, \\
V_i \otimes V_2 & \cong & V_6 \oplus V_7, \\
V_2 \otimes V_2 & \cong & V_0 \oplus V_1 \oplus V_2.
\end{eqnarray*}
\end{lemma}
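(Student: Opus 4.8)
The plan is to compute each tensor product by exploiting the $G$-grading on $D(S_3)$-modules together with the semisimplicity of $D(G)\text{-mod}$. Recall from Notation \ref{Drinfeld-double} that $\Delta(\phi_g h) = \sum_{x\in G}\phi_x h \otimes \phi_{x^{-1}g}h$, so that for homogeneous elements $m,n$ of $G$-grades $g_1,g_2$ (in the sense of Notation \ref{not:modulesOverDouble}) one computes, using that $\phi_x$ is the projector onto degree $x$, the diagonal action
$$(m \otimes n)\cdot(\phi_g h) = \delta_{g,\,g_1 g_2}\,(m\cdot h)\otimes(n\cdot h).$$
Thus $m\otimes n$ is homogeneous of grade $g_1 g_2$ and each $h \in G$ acts diagonally, shifting a grade $d$ to $h^{-1}dh$. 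Consequently $M \otimes N$ is graded-supported on the pointwise product of the supports of $M$ and $N$, and since a simple $(a,\chi)$ is determined by its support $[a]$ and the $C_G(a)$-representation $\chi$ on its degree-$a$ component, the multiplicity of $(a,\chi)$ in $M\otimes N$ equals the multiplicity of $\chi$ in the degree-$a$ homogeneous component of $M\otimes N$ regarded as a $C_G(a)$-module (note $C_G(a)$ is exactly the stabilizer of the grade $a$, so this component is a genuine submodule). Every decomposition below thus reduces to restricting one graded piece to a centralizer and splitting it by ordinary character theory.

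First I would dispose of $V_2 \otimes V_2$: every basis tensor has grade $e\cdot e = e$, so the module is supported on $[e]$ with $C_G(e)=S_3$. From the diagonal $r$- and $s$-actions of Table \ref{dihedral-n-2 table:}, this $4$-dimensional $S_3$-representation has character $(4,1,0)$ on the classes $(e,r,s)$, whose inner products with $\psi_0,\psi_1,\chi$ are all $1$; hence $V_2\otimes V_2 \cong V_0\oplus V_1 \oplus V_2$. For $V_i \otimes V_2$ with $i\in\{6,7\}$, every grade is $(sr^k)\cdot e = sr^k$, so the module is supported on $[s]$ with $C_G(s)=\langle s\rangle\cong\mathbb{Z}_2$. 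Its degree-$s$ component is two-dimensional, spanned by $u\otimes y_1$ and $u\otimes y_2$ where $u$ is the grade-$s$ generator of $V_i$; the diagonal $s$-action interchanges these two vectors up to the sign $\pm 1$ coming from $u\cdot s = \pm u$, so in either case $s$ has eigenvalues $+1$ and $-1$. This component is $\beta_0\oplus\beta_1$, whence $V_i\otimes V_2 \cong V_6\oplus V_7$.

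Finally, for $V_i\otimes V_i$ the product grades $sr^k\cdot sr^\ell = r^{\ell-k}$ all lie in $\langle r\rangle$: the three diagonal tensors have grade $e$ and the six off-diagonal tensors split evenly between grades $r$ and $r^2$. The case $i=6$ is recorded with explicit generators in Table \ref{V_{6}X V_{6}}, and for general $i$ the argument is identical. On the degree-$e$ component (the span of the diagonal tensors) the element $r$ acts as a $3$-cycle and $s$ fixes one vector while swapping the other two, giving $S_3$-character $(3,0,1)=\psi_0\oplus\chi$, i.e. $V_0\oplus V_2$. On the degree-$r$ component (the three grade-$r$ off-diagonal tensors) the generator $r$ of $C_G(r)=\langle r\rangle\cong\mathbb{Z}_3$ acts by a cyclic permutation, giving the regular representation $\alpha_0\oplus\alpha_1\oplus\alpha_2$, i.e. $V_3\oplus V_4\oplus V_5$. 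Altogether $V_i\otimes V_i \cong V_0\oplus V_2\oplus V_3\oplus V_4\oplus V_5$.

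There is no deep obstacle here; the only point demanding genuine care is the initial reduction, namely verifying — against the paper's right-module and coproduct conventions — that the grade of $m\otimes n$ is the \emph{product} $g_1 g_2$ (rather than $g_2 g_1$) and that passing to the degree-$a$ component faithfully records the multiplicities of the simples $(a,\chi)$. Once the displayed diagonal-action formula is established, each of the five decompositions is a short, self-checking character computation, with the dimension counts ($9=1+2+2+2+2$, $\;6=3+3$, $\;4=1+1+2$) serving as an immediate consistency check.
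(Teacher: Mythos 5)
Your argument is correct and amounts to the same direct calculation from Table \ref{dihedral-n-2 table:} that the paper simply invokes, only organized more systematically: you first verify from the coproduct in Notation \ref{Drinfeld-double} that the grade of $m\otimes n$ is the product of the grades and that $G$ acts diagonally, and then read off multiplicities by restricting each graded component to the relevant centralizer and applying ordinary character theory. This reproduces exactly what the paper's Table \ref{V_{6}X V_{6}} exhibits via explicit generators (your character and dimension counts all check out), so there is nothing to add.
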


\begin{proposition}\label{inner-faithful-D3}
A $D(S_3)$-module $V$  is inner faithful if and only if $V_6$ or $V_7$
is a direct summand of $V$.
\end{proposition}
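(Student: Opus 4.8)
The plan is to invoke the inner-faithfulness criterion of \cite[Theorem 1.4]{FKMW1}, which applies since $D(S_3)$ is semisimple: the module $V$ is inner faithful if and only if every simple $D(S_3)$-module $V_0,\dots,V_7$ occurs as a direct summand of $V^{\otimes n}$ for some $n$. The proof then splits into the two implications, and I would establish each using the fusion rules recorded in the preceding lemma and in Table \ref{V_{6}X V_{6}}.

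For the ``if'' direction, I would assume (without loss of generality, by the symmetry between $V_6$ and $V_7$ in the fusion rules) that $V_6$ is a summand of $V$, and then bootstrap. First, $V_6\otimes V_6\cong V_0\oplus V_2\oplus V_3\oplus V_4\oplus V_5$ is a summand of $V^{\otimes 2}$, so $V_0,V_2,V_3,V_4,V_5$ all appear. Since $V_2$ is now a summand of $V^{\otimes 2}$, the decomposition $V_2\otimes V_2\cong V_0\oplus V_1\oplus V_2$ shows $V_1$ appears in $V^{\otimes 4}$. Finally, $V_6\otimes V_2\cong V_6\oplus V_7$ is a summand of $V_6^{\otimes 3}$, hence of $V^{\otimes 3}$, producing $V_7$. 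Thus all eight simples occur and $V$ is inner faithful; the case $V_7\mid V$ is identical.

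The ``only if'' direction is the substantive one, and I would argue by contrapositive: if neither $V_6$ nor $V_7$ is a summand of $V$, then $V$ is a direct sum of copies of $V_0,\dots,V_5$ only. Here the key structural input is the $S_3$-grading from Notation \ref{not:modulesOverDouble}: the simples $V_0,V_1,V_2$ are supported in grade $e$ and $V_3,V_4,V_5$ in grades $\{r,r^2\}$, so all of $V_0,\dots,V_5$ are supported in the cyclic subgroup $A=\langle r\rangle$, whereas $V_6,V_7$ are supported entirely in the nontrivial coset $sA=\{s,sr,sr^2\}$. The main obstacle is to show that this grading is multiplicative under the tensor product, i.e.\ that a homogeneous element of grade $g$ tensored with one of grade $g'$ has grade $gg'$; I would verify this directly from the coproduct of Notation \ref{Drinfeld-double}, computing that $(m\otimes m')\cdot\phi_y=\delta_{y,gg'}(m\otimes m')$ when $m,m'$ have grades $g,g'$ (equivalently, one may cite that $D(G)\text{-mod}$ is the category of Yetter--Drinfeld modules, whose $G$-grading is visibly multiplicative). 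Granting this, every homogeneous component of $V^{\otimes n}$ is supported in the subgroup $A$ for all $n$, while $V_6$ and $V_7$ are supported in $sA$, which is disjoint from $A$; hence neither can occur as a summand of any $V^{\otimes n}$, and $V$ fails to be inner faithful.
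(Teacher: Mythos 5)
Your proof is correct, and while the ``if'' direction coincides with the paper's argument (the same chain $V_6\otimes V_6 \Rightarrow V_0,V_2,V_3,V_4,V_5$, then $V_2\otimes V_2 \Rightarrow V_1$, then $V_6\otimes V_2 \Rightarrow V_7$, all fed into \cite[Theorem 1.4]{FKMW1}), your ``only if'' direction takes a genuinely different route. The paper disposes of the converse by asserting that one may check directly that $V_i\otimes V_j$ does not contain $V_6$ or $V_7$ for any $0\leq i,j\leq 5$ --- a finite fusion-rule verification, which combined with semisimplicity and an implicit induction shows every $V^{\otimes n}$ decomposes into copies of $V_0,\dots,V_5$ only. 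You replace that case check with a structural argument: the $S_3$-grading of Notation \ref{not:modulesOverDouble} is multiplicative under tensor products (which you correctly verify from the coproduct formula $\Delta(\phi_y h)=\sum_x \phi_x h\otimes\phi_{x^{-1}y}h$, giving $(m\otimes m')\cdot\phi_y=\delta_{y,gg'}(m\otimes m')$ for homogeneous $m,m'$), the simples $V_0,\dots,V_5$ are supported on the subgroup $\langle r\rangle$, and $V_6,V_7$ are supported on the disjoint coset $s\langle r\rangle$, so no tensor power of a module built from $V_0,\dots,V_5$ can ever contain $V_6$ or $V_7$ as a summand. What your approach buys is conceptual clarity and generality: it eliminates the $36$ fusion computations and proves the stronger statement that for any finite group $G$, a $D(G)$-module whose support lies in a proper subgroup can never be inner faithful; what the paper's approach buys is that it stays entirely within the fusion data already tabulated, requiring no appeal to the grading structure of the tensor product. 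Both arguments rest on the same criterion from \cite{FKMW1} and on semisimplicity of $D(S_3)$ to reduce to direct sums of simples, so the two proofs are equally rigorous.
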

\begin{proof}
Let $V = V_6$ or $V_7$.  Then the simple modules 
\begin{gather*} V_0,V_2,V_3,V_4, V_5
\end{gather*}
all appear in the decomposition of $V\otimes V$.
The simple modules  $V_6$ and $V_7$ appear in the decomposition of
$V\otimes V_{2}$ which appears in the decomposition of $V^{\otimes 3}$. The remaining simple module
$V_1$ appears in the decomposition of $V_2\otimes V_2$ which appears in $V^{\otimes 4}$.
For the converse, one may check that $V_i \otimes V_j$ for $0 \leq i,j \leq 5$ do not
contain $V_6$ or $V_7$.
\end{proof}

\begin{theorem}
Let $B$ be a connected graded algebra generated in degree one with three generators
and three quadratic relations on which $D(S_3)$ acts inner-faithfully.  Then $B$ is not a domain.
\end{theorem}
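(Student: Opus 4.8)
The plan is to cut the problem down to a finite list of possible algebras and then, in each, produce a pair of nonzero degree-one elements with vanishing product. The key structural remark driving everything is that $B_2 = (B_1\otimes B_1)/R$, where $R$ is the relation space, so that $ww'=0$ holds in $B$ as soon as $w,w'\in B_1$ satisfy $w\otimes w'\in R$; thus it suffices to exhibit a nonzero \emph{rank-one} tensor lying in $R$.

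First I would pin down the degree-one piece. Since $B$ is generated by three elements, $B_1$ is a three-dimensional $D(S_3)$-module, and inner faithfulness together with Proposition \ref{inner-faithful-D3} forces $V_6$ or $V_7$ to be a direct summand of $B_1$; as $V_6$ and $V_7$ are themselves three-dimensional, $B_1\cong V_6$ or $B_1\cong V_7$. Writing $B_1=\Span\{x_1,x_2,x_3\}$, I would observe that the two choices give literally the same family of quadratic algebras: comparing Table \ref{dihedral-n-2 table:}, the modules $V_6$ and $V_7$ differ only by a sign in the action of $s$, and this sign cancels on degree-two tensors, while the $r$-action and the $G$-grading agree. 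Hence the full $D(S_3)$-action on $B_1\otimes B_1$ is identical in the two cases, and I may work with $V_6$ and its tensor-square decomposition from Table \ref{V_{6}X V_{6}}.

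Next I would classify the relations. For $D(S_3)$ to act on $B$, the span $R\subseteq B_1\otimes B_1$ of the three (independent) quadratic relations must be a three-dimensional $D(S_3)$-submodule. By Table \ref{V_{6}X V_{6}} one has $B_1^{\otimes 2}\cong V_0\oplus V_2\oplus V_3\oplus V_4\oplus V_5$, a multiplicity-free decomposition with dimensions $1,2,2,2,2$; since every submodule is a sum of a subset of these simples, the only way to reach dimension three is $R=V_0\oplus V_j$ with $j\in\{2,3,4,5\}$. This leaves exactly four algebras to treat, and in each I exhibit a decomposable tensor in $R$. For $j=2$ the $V_0$- and $V_2$-generators are $x_1^2+x_2^2+x_3^2$, $x_1^2+\zeta x_2^2+\zeta^2 x_3^2$ and $x_1^2+\zeta^2 x_2^2+\zeta x_3^2$, whose coefficient matrix is an invertible Vandermonde matrix, so $R$ contains $x_1^2,x_2^2,x_3^2$ and already $x_1^2=0$. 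For $j=3$ one checks that $(x_1+x_2+x_3)^2$ equals the sum of the $V_0$-generator and the two $V_3$-generators, hence lies in $R$. For $j=4$ I take $w=x_1+x_2+\zeta^2 x_3$ and $w'=x_1+x_2+\zeta x_3$ and verify that $ww'$ equals the $V_0$-generator plus the first $V_4$-generator plus $\zeta$ times the second, so $w\otimes w'\in R$; the case $j=5$ follows by applying the field automorphism $\zeta\mapsto\zeta^2$, i.e. $w=x_1+x_2+\zeta x_3$, $w'=x_1+x_2+\zeta^2 x_3$. In every case $w,w'$ are nonzero degree-one elements with $ww'=0$, so $B$ has a nontrivial zero divisor and is not a domain.

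I expect the main obstacle to be the cases $j=4,5$. Unlike $j=2,3$, there is no nonzero $w$ with $w^{\otimes 2}\in R$: a direct comparison of the coefficients of $w\otimes w$ against a general element $\lambda R_0+\mu R_4^u+\nu R_4^v$ of $R$ forces $\mu=\nu=0$ (using $1+\zeta+\zeta^2=0$) and then $w=0$. Consequently the zero divisor must be assembled from two genuinely different linear forms $w\neq w'$, and locating the correct pair amounts to solving the rank-one conditions $a_ib_j=(\text{prescribed scalar})$ coming from the explicit $V_4$- (resp. $V_5$-) generators in Table \ref{V_{6}X V_{6}}; this is where the bulk of the (routine but not automatic) verification lies.
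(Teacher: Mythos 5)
Your proof is correct and follows essentially the same route as the paper: reduce to $B_1\cong V_6$, observe that the three relations must span $V_0\oplus V_j$ for some $j\in\{2,3,4,5\}$, and exhibit an explicit zero divisor in each of the four resulting algebras. The only (welcome) deviation is in the $V_0\oplus V_3$ case, where your observation that $(x_1+x_2+x_3)\otimes(x_1+x_2+x_3)$ is exactly the sum of the three relation generators gives $(x_1+x_2+x_3)^2=0$ directly, avoiding the Gr\"obner-basis computation the paper uses there.
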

\begin{proof}
By Proposition \ref{inner-faithful-D3}, one must have $B_1 \cong V_6$ or $V_7$ as a module over
$D(S_3)$.  One may check that $V_6 \otimes V_6 \cong V_7 \otimes V_7$, so without loss of generality
we may assume that $V = V_6$.

By the decomposition of $V_6 \otimes V_6$ above, we have four possible such algebras on which $D(S_3)$ acts.
The first such algebra uses relations from $V_0$ and $V_2$, which gives the algebra
$\Bbbk\langle x,y,z \rangle/\langle x^2,y^2,z^2 \rangle$ which is clearly not a domain.
The second such algebra uses relations from $V_0$ and $V_3$, and is
$\Bbbk\langle x,y,z \rangle/\langle x^2 + y^2 + z^2, xz + yx + zy, xy + zx + yz \rangle$.  One may check that
these three relations form a Gr\"obner basis of the ideal using graded lexicographic order with $x>y>z$, and
using this one can show that the elements $x + y + z$ and $yx - yz + zy - z^2$ are nonzero, but have 
product equal to zero.  The last two algebras are related by choosing a different primitive third
root of unity $\zeta$, so we  give details only when we use the relations coming from $V_0$ and $V_4$.
In this case, one sees that $(\zeta^2x + y + z)(\zeta x + y + z) = 0$.
\end{proof}
\end{section}
\begin{section}{Remarks and Conjectures}\label{questions}

The doubles $D(\mathfrak{D}_{4})$ and $D(\mathfrak{D}_{8})$ studied here were explored after unsuccessful attempts to find AS regular algebras $A$ on which the doubles of the dihedral group of order 8 and the dicyclic group of order 12 act inner faithfully.  In each of the groups $G$ that we considered there were numerous inner faithful $D(G)$-modules, but we were unsuccessful in finding relations for an AS regular algebra $A$ on which $D(G)$ acts inner faithfully.
We conclude with a number of questions.
\begin{question} For $n=2,4$ characterize the inner faithful $D(\mathfrak{D}_{2n})$-modules $V$ for which there an AS regular algebra $A$ with $A_1 = V$ on which $D(\mathfrak{D}_{2n})$ acts inner faithfully. Macaulay2 produced a large number of choices for $V$ having a minimal number of simple summands, and we did not consider in depth all of the possibilities, or the effect of adding more summands to $V$  (which will preserve the inner faithful action property). There were some choices of $V$ where we were unable to find such an algebra $A$.
\end{question}
\begin{question} For which dicyclic groups $\mathfrak{D}_{2n}$ does there exist an AS regular algebra $A$ on which $\mathfrak{D}_{2n}$ acts inner faithfully?  Perhaps $n$ must be even or a power of $2$.
\end{question}
\begin{question} Given an AS regular algebra $A$ that is a double Ore extension, what are the Hopf algebras $H$ that act inner faithfully on it? There is an AS regular  algebra $A$ of dimension 4 which is a double Ore extension on which the dual of the group algebra of the semidihedral group of order 16 acts inner faithfully (see \cite{GKMV}), and Ore extensions have appeared quite often as the AS regular algebras on which a Hopf algebra acts.
\end{question}
\begin{question} For which finite groups $G$ is there an AS regular algebra  $A$ on which $D(G)$ acts inner faithfully?  What properties characterize those representations of $G$?
\end{question}

\begin{question} How are properties of the ring of invariants under the action of $D(G)$ controlled by $G$, or by properties of both $G$ and the dual of $G$?
\end{question}

\begin{question}
The double of a group $D(G)$ is a quasi-triangular Hopf algebra and an almost cocommutative Hopf algebra. The fact that $V_1 \otimes V_2 \cong V_2 \otimes V_1$ was useful in constructing the algebras $A$. Are there properties of a Hopf algebra $H$ (or of its representation ring) (or of doubles $D(H)$) that produce appropriate AS regular algebras? 
\end{question}
There remain many interesting questions about the pairs $(A,H)$, where $A$ is an AS regular algebra supporting an inner faithful action by a Hopf algebra $H$.\\
~\\
~\\
\end{section}
\noindent
{\bf Acknowledgement}:  Some of this work was supported by the National Science Foundation under Grant No. DMS-1928930, while the first author was in residence at the Simons Laufer Mathematical Sciences Institute in Berkeley, California, during the spring semester of 2024.

\end{document}